\newcommand{\R}{{\Bbb R}}
\newcommand{\Z}{{\Bbb Z}}
\newtheorem{theorem}{Theorem}[section]
\newtheorem{proposition}[theorem]{Proposition}
\newtheorem{lemma}[theorem]{Lemma}
\newtheorem{assumption}[theorem]{Assumption}
\newtheorem{remark}[theorem]{Remark}
\newtheorem{RHproblem}[theorem]{RH problem}
\numberwithin{equation}{section}
\begin{document}
	\title[Painlevé asymptotics of Boussinesq equation]
	{Long-time asymptotics of the good Boussinesq equation and its modified version: Painlev\'{e} region}

	\author{Deng-Shan Wang$^{\dagger}$ and Xiaodong Zhu$^{\dagger,\ddagger}$}
	\address{ $^{\dagger}$Laboratory of Mathematics and Complex Systems (Ministry of Education),  \\ School of Mathematical Sciences,
		Beijing Normal University,Beijing 100875, China.}
        
        \address{$^{\ddagger}$ SISSA, via Bonomea 265, 34136 Trieste, Italy, INFN Sezione di Trieste.} 
	\email{wangds1980@163.com, xdzbnu@mail.bnu.edu.cn}

	\subjclass[2010]{Primary 37K40; Secondary 35Q15, 37K10}
	
	\date{\today}
	
	
	\keywords{Boussinesq equation, Riemann-Hilbert problem, Painlev\'{e} region}
	
\begin{abstract}
This work investigates the long-time asymptotic behaviors of initial value problem for the good Boussinesq equation and the modified Boussinesq equation in Painlev\'{e} region. The Deift-Zhou steepest descent method is used to deform the associated $3 \times 3$ Riemann-Hilbert problem to the Painlev\'e IV model. Then asymptotic formulas for the modified Boussinesq equation in both the Painlev\'e region and the Painlev\'e transition region are derived, characterized by the Clarkson-McLeod solution of the Painlev\'e IV equation. Additionally, the leading-order term of the good Boussinesq equation in Painlev\'{e} region is obtained via the Miura transformation. The theoretical asymptotic solutions are validated against direct numerical simulations, confirming the accuracy of the asymptotic analysis.

\end{abstract}

\maketitle
	

\section{Introduction}
	
In 1834, J. S. Russell \cite{Russell-1844} first observed a remarkable phenomenon of water waves that maintained their shapes and velocities during propagation. However, due to the limitations of mathematical theories and scientific understanding at that time, people were unable to provide a satisfactory theoretical explanation for this kind of water waves. It was not until 1872 that J. V. Boussinesq \cite{Boussinesq-1872} when studying the shallow water waves, proposed a model to describe a two-dimensional, irrotational, and inviscid fluid flow in a uniform rectangular channel with a flat bottom. His pioneering work \cite{Boussinesq-1872} provided the first comprehensive explanation for the traveling wave solutions discovered by J. S. Russell, described wave propagation in two directions, and finally offered a theoretical framework for understanding solitary waves. In a first order approximation, Boussinesq’s model reduces to a scalar, fourth order partial differential equation, which in its original form is
\begin{equation}\label{classical-boussinesq}
 \frac{\partial^2 h}{\partial t^2}=gH\frac{\partial^2 h}{\partial x^2} + gH\frac{\partial^2 }{\partial x^2}\bigg(\frac{3}{2}\frac{h^2}{H}+\frac{H^2}{3}\frac{\partial^2 h}{\partial x^2}\bigg),
\end{equation}
where $h=h(x,t)$ represents the free surface of the perturbation wave, $H$ is the average depth of the water, and $g$ denotes the acceleration due to gravity. A scaling transformation is applied to the independent variables and the unknown function $h$ as
$$
t\to \sqrt{\frac{H}{g}}t,\quad x \to Hx,\quad h \to -\frac{4}{9}Hu(x,t),
$$
which leads to the dimensionless Boussinesq equation
\begin{align}\label{badboussinesq}
  u_{tt} - u_{xx} +\frac{2}{3} (u^2)_{xx} - \frac{1}{3}u_{xxxx} = 0.
\end{align}
This equation is now known as the bad Boussinesq equation which is completely integrable and has self-adjoint Lax operator. It is linearly unstable and ill-posed owing to the exponential growth of its Fourier components with frequencies large enough. The most recent results on the blow-up of the bad Boussinesq equation can be found in Charlier \cite{Charlier-Blowup}. Over the past years, significant efforts have been made to study the bad Boussinesq equation (\ref{badboussinesq}). Zakharov \cite{Zakharov-1973} first provided the Lax representation of equation (\ref{badboussinesq}), confirmed the existence of an infinite number of involutive integrals of the motion and reinterpreted the numerical experiments of Fermi, Pasta, and Ulam from a novel perspective. Subsequently, Deift, Tomei and Trubowitz \cite{Deift-1982} developed the inverse scattering technique to explore the  Schwartz class solutions of equation (\ref{badboussinesq}). In the same year,  Caudrey \cite{Caudrey-1982} employed a special case of spectral transformation to derive the $N$-soliton solution of equation (\ref{badboussinesq}). Most recently, Charlier and Lenells constructed the Riemann-Hilbert problem to investigate the long-time asymptotics \cite{Charlier-Lenells-1,Charlier-Lenells-2} and soliton resolution conjecture \cite{Charlier-Lenells-3,Charlier-Lenells-4} of equation (\ref{badboussinesq}) based on the Deift-Zhou nonlinear steepest descent method \cite{DZ1993}. These studies have deepened our understanding of the bad Boussinesq equation. 
\par
Reversing the sign of the fourth derivative term in the bad Boussinesq equation (\ref{badboussinesq}) gives the good Boussinesq equation
\begin{align}\label{good-boussinesq}
  u_{tt} - u_{xx} + \frac{2}{3}(u^2)_{xx} + \frac{1}{3} u_{xxxx} = 0,
\end{align} 
which can be written in two-component form by taking $u_t=v_x$:
\begin{align}\label{good-boussinesq-0}
  v_{t} - u_{x} +\frac{2}{3} (u^2)_{x} + \frac{1}{3}u_{xxx} = 0.
\end{align} 
The good Boussinesq equation is also physically significant. For example, it can describe the oscillations in nonlinear strings \cite{Falkovich-1983} and can also be derived from the lattice dynamics for a chain of atoms in elastic crystals \cite{Maugin-1999}. Taking the simple shift $u\to 2 u+{3}/{4}$ and then rescaling the dependent and independent variables, the term $u_{xx}$ in equation (\ref{good-boussinesq}) can be removed and another form of good Boussinesq equation is obtained \cite{Charlier-Lenells-2021,Wang-APDE}
\begin{equation}\label{good-boussinesq-1}
	u_{tt}+\frac{4}{3}(u^2)_{xx}+\frac{1}{3}u_{xxxx}=0,
\end{equation}
For $u_t = w_x$, (\ref{good-boussinesq-1}) can also be rewritten as
\begin{equation}\label{good-boussinesq-2} 
\begin{cases}
    \begin{aligned}
        &u_t=w_x,\\
        &w_t  + \frac{4}{3}(u^2)_x + \frac{1}{3}u_{xxx}= 0.
    \end{aligned}
\end{cases}
\end{equation}
The Cauchy problem of equation (\ref{good-boussinesq}) is locally well-posed, and in some cases, a global existence theory is also established \cite{Falkovich-1983,Bona-1988,Farah-2009,Compaan-2017}. From the viewpoint of integrable systems, Kaup \cite{Kaup-1980} considered the inverse scattering problem for the
cubic eigenvalue problems, in which the Boussinesq equation is a typical example. McKean \cite{McKean-1981,McKean-1981-Physica-D} studied the good Boussinesq equation (\ref{good-boussinesq-1}) on the circle to explain how Krichever's construction encompasses all solutions in the space $C_{1}^{\infty}$, provided that curves of infinite genus are considered. In recent years, Charlier and Lenells \cite{Charlier-Lenells-2021} constructed the Riemann-Hilbert problem of equation (\ref{good-boussinesq-1}) by conducting a spectral analysis on its associated Lax pair. Subsequently, Charlier, Lenells and one of the present authors \cite{Wang-APDE}, and the present authors \cite{WangJMP} investigated the long-time asymptotics of the solution to the equation (\ref{good-boussinesq-1}) and equation (\ref{good-boussinesq-0}) in dispersive wave region, respectively. Nevertheless, these studies still fall short of fully solving the open problem concerning the long-time asymptotics of the good Boussinesq equation with a non-self-adjoint third-order Lax operator, as proposed by Deift \cite{Deift-2008}, because the long-time behavior of the solution near the positive $t$-axis remains open now \cite{Wang-APDE,WangJMP}. The present work aims to resolve this open problem and provides a clear understanding of the middle regions where $x/t$ is near zero in the upper $(x,t)$-plane.   
\par
Similar to the relationship between the KdV equation and the modified KdV (mKdV) equation by Miura transformation \cite{Miura-1968}, the good Boussinesq equations (\ref{good-boussinesq-0}) and (\ref{good-boussinesq-2}) are related with the modified Boussinesq equation \cite{Fordy-1981,Wang-Zhu-Zhu-2025}  
    \begin{equation}
        \begin{cases}
            \begin{aligned}
			p_t & = 2(p q)_x + q_{xx}, \\
			q_t & = -\frac{1}{3} p_{xx} + \frac{2}{3} p p_x - 2 q q_x,
		\end{aligned}
        \end{cases}
        \label{mbequation}
    \end{equation}
via the Miura transformations    
    \begin{equation}\label{Miuratranf}
    \begin{cases}
        \begin{aligned}
		& u =\frac{3}{4} -(2p_x +3q^2+ p^2), \\
		& v = -2(q_{xx} +3 p q_x +q p_x +2 q (p^2-q^2)),
	\end{aligned}
    \end{cases}
    \end{equation}
and 
\begin{equation}
    \begin{cases}
        \begin{aligned}
		& u = -p_x - \frac{1}{2} p^2 - \frac{3}{2} q^2, \\
		& w = -q_{xx} - 3 p q_x - q p_x - 2 q p^2 + 2 q^3,
	\end{aligned}
    \end{cases}
    \label{Miuratranf}
    \end{equation}
respectively.
\par
By deriving a new similarity reduction, Clarkson \cite{Clarkson-1989} successfully established the connection between the solution of the modified Boussinesq equation (\ref{mbequation}) and the solution $P_{\rm IV}(y)$ of the Painlev\'e IV equation
\begin{equation}       
\frac{\mathrm{d}^2 P_{\rm IV}}{\mathrm{d} y^2} = \frac{1}{2 P_{\rm IV}} \left( \frac{\mathrm{d} P_{\rm IV}}{\mathrm{d} y} \right)^2 + \frac{3}{2} P_{\rm IV}^3 + 4 y P_{\rm IV}^2 + \left( 2 y^2 - 4 \alpha + \beta \right) P_{\rm IV} - \frac{\beta^2}{2 P_{\rm IV}},  
\label{PIV}  
\end{equation} 
where $\alpha$ and $\beta$ are constants.
Subsequently, Clarkson and Kruskal \cite{Clarkson-Kruskal-1989} further reduced the Boussinesq equation into Painlev\'e IV equation by developing a direct method. So it is believed that the modified Boussinesq equation and the good Boussinesq equation have a very close connection with the Painlev\'e IV equation (\ref{PIV}). In detail, consider the region near the upper half of the $t$-axis in the $(x,t)$-plane; see Figure~\ref{Category}. Let
\(
y = -\frac{\sqrt{3}\, x}{2\sqrt{t}}.
\)
As $t \to \infty$, the solution to the modified Boussinesq equation exhibits the following asymptotic behavior:
\[
\begin{cases}
\begin{aligned}
& p(x,t) \sim \frac{\sqrt{3}}{4\sqrt{t}} \cdot \frac{P_{\rm IV}'(y) + \frac{2}{3}}{P_{\rm IV}(y)}, \\
& q(x,t) \sim \frac{\sqrt{3}}{4\sqrt{t}} \left( P_{\rm IV}(y) + \frac{2}{3} y \right),
\end{aligned}
\end{cases}
\]
where $\alpha = -\frac{1}{6}$ and $\beta = -\frac{2}{3}$. Moreover, applying the Miura transformation given in (\ref{Miuratranf}), one obtains the following asymptotic behavior for the solution of the good Boussinesq equation:
\[
u(x,t) \sim -\frac{\left(3 P'_{\rm IV}(y) + 2\right)^2 - 9 P_{\rm IV}^4(y) - 36 y P_{\rm IV}^3(y) - 20 y^2 P_{\rm IV}^2(y)}{32 t\, P_{\rm IV}^2(y)}.
\]

The asymptotic analysis of Painlev\'e IV equation subject to the boundary condition $P_{\rm IV}(y) \to 0$ as $y \to +\infty$ was pioneered by Clarkson and McLeod \cite{Clarkson-Mcleod}. More recently, Xia, Xu and Zhao \cite{Xia-Xu-Zhao-2023} studied the Clarkson-McLeod solutions of Painlev\'e IV equation (\ref{PIV}) by using Deift-Zhou nonlinear steepest descent method \cite{DZ1993}. 
\par
Inspired by the observations that the long-time asymptotic behaviors of the mKdV equation \cite{DZ1993} and KdV equation \cite{Deift-Zhou-1994} in the region $|x| \leq M t^{1/3}~(M > 0)$, are connected to the solution of the Painlev\'e $\rm II$ equation, this work applies the Deift-Zhou nonlinear steepest descent method to the $3\times3$ Riemann-Hilbert problem derived in ~\cite{Charlier-Lenells-2021, WangJMP} to formulate the long-time asymptotic behaviors of the modified Boussinesq equation and the good Boussinesq equation, respectively. Because the reflection coefficients of the good Boussinesq equation behave $|r_j(k)|\geq 1~(j=1,2)$ for small $k$, it is necessary to employ the Miura transformation to explore the long-time asymptotics of the good Boussinesq equation near the positive $t$-axis.   
\par 
The structure of this paper is as follows: the main results are presented in Section \ref{Main}. Section \ref{Trans} introduces a series of deformations to the original RH problem. In Section \ref{LOcalmodel}, a local parametrix is constructed near \(k=0\), associated with the Painlevé \(\rm {IV}\) equation as described in (\ref{PIV}). In particular, Section \ref{Transition} focuses on the Painlevé transition region between the Painlevé region and the dispersive wave region.

\subsection{\bf{Notations}}
    \begin{itemize}
    \item \(C\) and \(c\) represent the positive constants that may vary during calculations.  
    \item \([M]_i\) denotes the \(i\)-th column of the matrix \(M\), and \([M]_{i,j}\) represents the \((i,j)\)-th element of \(M\).  
    \item \(\omega = e^{\frac{2\pi i}{3}}\), and \(\Re \Phi\) denotes the real part of the function \(\Phi\).  
    \item \(D_{\epsilon}(k_0)\) denotes the open disk in the complex plane centered at \(k_0\) with radius \(\epsilon\), and \(\partial D_{\epsilon}(k_0)\) represents its boundary.  
    \item \(\mathcal{S}(\mathbb{R})\) denotes the Schwartz space of rapidly decreasing functions on \(\mathbb{R}\).
    \item \(r^{*}(k)\) denotes the Schwartz reflection of \(r(k)\).  
    \item Let $J$ be some diagonal matrix, then \(e^{\hat{J}}\) denotes an operator on the matrix $V$, defined by \( e^{\hat{J}}V = e^{J} V e^{-J} \). 
    \item The function $\log_0(k)$ denotes the logarithm with a branch cut along $\mathbb{R}_+$, and $\ln(x)$ represents the natural logarithm of real value. Specifically, for a nonzero complex number $k = |k| e^{i\arg(k)}$, define $\log_0(k)$ as  
$
\log_0(k) := \ln(|k|) + i\arg(k),
$  
where $0 \leq \arg(k) < 2\pi$.
    \item For two matrices $A$ and $B$, $[A,B]$ means that $[A,B]=AB-BA$.
\end{itemize}

\section{\bf Main Results}\label{Main}

Before presenting the main results of this work, recall the RH problems corresponding to the good
Boussinesq equation (\ref{good-boussinesq-1}) and the modified Boussinesq equation (\ref{mbequation}). For convenience, adopt the conventional notation that the solution of the RH problem associated with the good Boussinesq equation is denoted by $M(x,t;k)$, while the solution associated with the modified Boussinesq equation is denoted by $m(x,t;k)$. Both RH problems share the same jump contour $\Sigma := \mathbb{R} \cup \omega \mathbb{R} \cup \omega^2 \mathbb{R}$, where $\omega = e^{\frac{2\pi i}{3}}$, as shown in Figure \ref{Sigma}.
    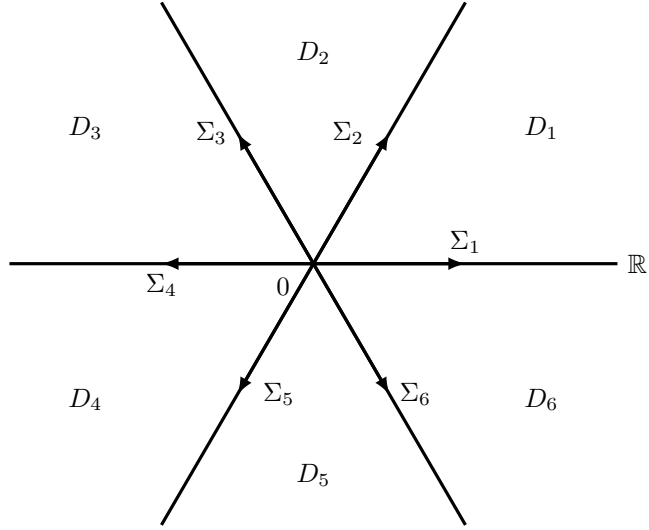
\begin{figure}[htp]
		\centering
		\begin{tikzpicture}[>=latex]
			\draw[<->,very thick] (-2,0)node[below]{${\Sigma_4}$} to (2,0)node[above]{${\Sigma_1}$};
			\draw[very thick] (-4,0) to (4,0)node[right]{$\R$};
			\draw[<->,very thick] (-1,-1.732)node[right=2mm]{${\Sigma_5}$} to (1,1.732)node[left=2mm]{${\Sigma_2}$};
			\draw[very thick] (-2,-1.732*2) to (2,1.732*2);
			\draw[<->,very thick] (-1,1.732)node[left]{${\Sigma_3}$} to (1,-1.732)node[right]{${\Sigma_6}$};
			\draw[very thick] (-2,1.732*2) to (2,-1.732*2);
			\node at (-.4,-.3){$0$};
			\node at (3,1.8){$D_{1}$};
			\node at (0,2.8){$D_{2}$};
			\node at (-3,1.8){$D_{3}$};
			\node at (-3,-1.8){$D_{4}$};
			\node at (0,-2.8){$D_{5}$};
			\node at (3,-1.8){$D_{6}$};
		\end{tikzpicture}
		\caption{The jump contour $\Sigma=\cup_{i=1}^{6}\Sigma_i$ and the six open sets $D_{j}$ for $j=1,2,\cdots,6$.}
		\label{Sigma}
	\end{figure}
    
   Let $\Sigma_j$ and $D_j~(j=1, 2,\cdots,6)$ be the oriented contours and open sets displayed in Figure \ref{Sigma}, respectively. Given two functions called reflection coefficients, i.e., $r_1(k):~(0,\infty)\to \mathbb{C}$ and $r_2(k):~(-\infty,0)\to \mathbb{C}$, define the jump matrices $v(x,t;k)$ on the contours $\Sigma_j~(j=1,2,\cdots,6)$ as follows:
   
    	\begin{equation}\label{jumps0}
		\begin{aligned}
			& v_1=\small{\begin{pmatrix}
				1 & -r_1(k) e^{- \vartheta_{21}(x,t; k)} & 0 \\
				r_1^*(k) e^{ \vartheta_{21}(x,t; k)} & 1-\left|r_1(k)\right|^2 & 0 \\
				0 & 0 & 1
			\end{pmatrix}}, \quad
                v_2=\small{\begin{pmatrix}
				1 & 0 & 0 \\
				0 & 1-\left|r_2(\omega k)\right|^2 & -r_2^*(\omega k) e^{-\vartheta_{32}(x,t; k)} \\
				0 & r_2(\omega k) e^{\vartheta_{32}(x,t; k)} & 1
			\end{pmatrix}}, \\
			&v_3=\small{\begin{pmatrix}
				1-\left|r_1\left(\omega^2 k\right)\right|^2 & 0 & r_1^*\left(\omega^2 k\right) e^{-\vartheta_{31}(x,t; k)} \\
				0 & 1 & 0 \\
				-r_1\left(\omega^2 k\right) e^{\vartheta_{31}(x,t; k)} & 0 & 1
			\end{pmatrix}},\\
                &v_4=\small{\begin{pmatrix}
				1-\left|r_2(k)\right|^2 & -r_2^*(k) e^{-\vartheta_{21}(x,t; k)} & 0 \\
				r_2(k) e^{\vartheta_{21}(x,t; k)} & 1 & 0 \\
				0 & 0 & 1
			\end{pmatrix}}, \quad
			v_5=\small{\begin{pmatrix}
				1 & 0 & 0 \\
				0 & 1 & -r_1(\omega k) e^{-\vartheta_{32}(x,t; k)} \\
				0 & r_1^*(\omega k) e^{\vartheta_{32}(x,t; k)} & 1-\left|r_1(\omega k)\right|^2
			\end{pmatrix}}, \\
               &v_6=\small{\begin{pmatrix}
				1 & r_2\left(\omega^2 k\right) e^{-\vartheta_{31}(x,t; k)} & 0\\
				0 & 1 & 0 \\
				-r_2^*\left(\omega^2 k\right) e^{\vartheta_{31}(x,t; k)} & 0 & 1-\left|r_2\left(\omega^2 k\right)\right|^2
			\end{pmatrix}},
		\end{aligned}
	\end{equation}
    where $\vartheta_{ij}=(\omega^i-\omega^j)kx+(\omega^{2i}-\omega^{2j})k^2t$ for $1\le j<i\le 3$.
    Now introduce the RH problem for $m(x,t;k)$ of the modified Boussinesq equation ($\ref{mbequation}$).

  \begin{RHproblem}\label{RHPm}
      Let $r_1(k)$ and $r_2(k)$ be certain functions defined on $(0,+\infty)$ and $(-\infty,0)$, respectively. Find a 3$\times$3 matrix-valued function $m(x,t;k)$ satisfies the following properties:
      \begin{enumerate}
          \item[(i)] The function $m(x,t;k)$ is analytic for $k\in\mathbb{C}\setminus\Sigma$.
          \item[(ii)] The limits approach the jump contour $\Sigma$ from left and right exist and satisfy the jump condition $m_+(x,t;k)=m_-(x,t;k)v(x,t;k)$, where $v(x,t;k)$ is defined in (\ref{jumps0}).
          \item[(iii)] As $k\to\infty$, the expansion of $m(x,t;k)$ admits that
          $$
          m(x,t;k)=I+\mathcal{O}\left(\frac{1}{k}\right).
          $$
          \item[(iv)] The function $m(x, t; k)$ follows the $\mathbb{Z}_2$ and $\mathbb{Z}_3$ symmetries
		\begin{equation}\label{symmetry}
			\mathcal{A }m(x, t; \omega k) \mathcal{A}^{-1}=m(x, t; k)=\mathcal{B}\overline{ m(x, t; \bar{k})} \mathcal{B}, \quad k \in \mathbb{C} \backslash \Sigma,
            \end{equation}
            where 
            $$
            \mathcal{A}:=\begin{pmatrix}
                0&0&1\\
                1&0&0\\
                0&1&0
            \end{pmatrix},\quad \mathcal{B}:=\begin{pmatrix}
                0&1&0\\
                1&0&0\\
                0&0&1
            \end{pmatrix}.
            $$
          \item[(v)] As $k\to0$, there exist $\{\mathbf{m}^{(l)}(x,t)\}_{l=0}^{\infty}$ such that
          $$
          m(x,t;k)=\sum_{l=0}^N\mathbf{m}^{(l)}(x,t)k^l+\mathcal{O}(k^{N+1}).
          $$
      \end{enumerate}
  \end{RHproblem}

    \begin{assumption}\label{assumption}
        In generic case, for the modified Boussinesq equation (\ref{mbequation}), assume the reflection coefficients satisfy \(|r_j(k)| < 1\) for \(j = 1, 2\) as adopted in Ref. \cite{WangJMP}. Furthermore, assume that both the Boussinesq equation (\ref{good-boussinesq-1}) and the modified Boussinesq equation (\ref{mbequation}) with Schwartz class initial data do not support the existence of solitons. This implies that the property $(i)$ in RH problem \ref{RHPm} holds and the solution $m(x,t;k)$ of the RH problem \ref{RHPm} doesn't have poles.
    \end{assumption}

    \begin{proposition}
    Suppose the reflection coefficients $r_1(k)$ and $r_2(k)$ satisfy the Assumption \ref{assumption}, and assume the RH problem \ref{RHPm} has a solution for $(x,t)\in\R\times[0,\infty)$, then if the initial data \(p(x,0)=p_0(x)\) and \(q(x,0)=q_0(x)\) of the modified Boussinesq equation (\ref{mbequation}) satisfy Assumption \ref{assumption}, the solution $p(x, t)$ and $q(x, t)$ can be reconstructed from the RH problem \ref{RHPm} in the form 
        \begin{equation}\label{recovermb}
        \left\{
        \begin{array}{l}
        p(x, t) = \frac{3}{2} \mathop{{\rm lim}}\limits_{k \to \infty}k\left( [m(x, t; k)]_{31} + [ m(x, t; k)]_{32}\right), \\[5pt]
        q(x, t) = \frac{1}{2 \omega(\omega-1)} \mathop{{\rm lim}}\limits_{k \to \infty}k\left([ m(x, t; k)]_{31} -  [m(x, t; k)]_{32}\right),
        \end{array}
        \right.
        \end{equation}
        where \([m(x, t; k)]_{ij}\) denotes the \((i,j)\)-th entry in the matrix \(m(x, t;k)\).    
    \end{proposition}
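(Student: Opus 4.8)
\emph{Sketch of proof.} The plan is to run the standard reconstruction argument: substitute the large-$k$ expansion of $m(x,t;k)$ into the $x$-part of the Lax pair underlying RH problem~\ref{RHPm}, match powers of $k$, and then invoke the $\mathbb{Z}_2$ and $\mathbb{Z}_3$ symmetries~(\ref{symmetry}) to read off the physical combinations of the first moment $m^{(1)}(x,t):=\lim_{k\to\infty}k\,(m(x,t;k)-I)$ guaranteed by property~(iii).

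First I would recall from \cite{Charlier-Lenells-2021,WangJMP} the $3\times3$ Lax pair that generates the jump matrices~(\ref{jumps0}): after conjugating the eigenfunction by the exponential carrying the phases $\vartheta_{ij}$, the solution $m$ satisfies an equation of the form
\begin{equation}
m_x - k\,[\,\mathbf{A},\, m\,] = \mathbf{Q}(x,t)\, m,\qquad \mathbf{A}=\diag(1,\omega,\omega^2)\ \text{(up to a fixed normalization)},
\end{equation}
where $\mathbf{A}$ is the constant leading symbol and the off-diagonal entries of $\mathbf{Q}(x,t)$, in particular $\mathbf{Q}_{31}$ and $\mathbf{Q}_{32}$, are linear combinations of $p(x,t)$ and $q(x,t)$. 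Inserting
\begin{equation}
m(x,t;k)=I+\frac{m^{(1)}(x,t)}{k}+\frac{m^{(2)}(x,t)}{k^2}+\mathcal{O}(k^{-3}),\qquad k\to\infty,
\end{equation}
and collecting the $\mathcal{O}(k^0)$ terms yields $[\mathbf{A},m^{(1)}]=-\mathbf{Q}$, hence $\mathbf{Q}_{ij}=-(\mathbf{A}_{ii}-\mathbf{A}_{jj})\,m^{(1)}_{ij}$ for $i\neq j$. In particular $\mathbf{Q}_{31}$ and $\mathbf{Q}_{32}$ are recovered from $m^{(1)}_{31}$ and $m^{(1)}_{32}$; solving the resulting $2\times2$ linear system for $p$ and $q$ and using $m^{(1)}_{ij}=\lim_{k\to\infty}k\,[m(x,t;k)]_{ij}$ produces (\ref{recovermb}) with the stated prefactors $\tfrac32$ and $\tfrac1{2\omega(\omega-1)}$.

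I would then use the symmetries both to check consistency and to confirm reality. The $\mathbb{Z}_3$ symmetry $\mathcal{A}m(x,t;\omega k)\mathcal{A}^{-1}=m(x,t;k)$ forces $m^{(1)}$ into the $\mathcal{A}$-equivariant form compatible with the $\mathbb{Z}_3$ structure of $\mathbf{A}$ and $\mathbf{Q}$, which is exactly what makes the relation $[\mathbf{A},m^{(1)}]=-\mathbf{Q}$ self-consistent; the $\mathbb{Z}_2$ symmetry $\mathcal{B}\overline{m(x,t;\bar k)}\mathcal{B}=m(x,t;k)$ gives $m^{(1)}_{31}=\overline{m^{(1)}_{32}}$, so that $m^{(1)}_{31}+m^{(1)}_{32}\in\mathbb{R}$ and $(m^{(1)}_{31}-m^{(1)}_{32})/(\omega(\omega-1))\in\mathbb{R}$ since $\omega(\omega-1)=-i\sqrt3$; thus the reconstructed $p$ and $q$ are real, as they must be. Finally, repeating the order matching with the $t$-part of the Lax pair shows the same $m^{(1)}$ carries the correct time evolution, so $(p,q)$ defined by (\ref{recovermb}) solves (\ref{mbequation}); evaluating at $t=0$ and using that RH problem~\ref{RHPm} was built from the scattering data of $p_0,q_0$ in \cite{WangJMP} gives $p(x,0)=p_0(x)$, $q(x,0)=q_0(x)$.

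The only place genuine care is required is the first step: fixing the exact gauge and normalization in which RH problem~\ref{RHPm} corresponds to the Lax pair of (\ref{mbequation}), because the differences $\mathbf{A}_{ii}-\mathbf{A}_{jj}$, and hence the precise prefactors $\tfrac32$ and $\tfrac1{2\omega(\omega-1)}$ in (\ref{recovermb}), are entirely dictated by that choice. This is bookkeeping inherited from \cite{Charlier-Lenells-2021,WangJMP}; note that property~(v), the expansion at $k=0$, plays no role in the reconstruction and enters only later in the construction of the local parametrix near $k=0$.
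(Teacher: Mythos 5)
Your proposal is correct and follows the standard Lax-pair reconstruction (expand $m$ at $k=\infty$, match the $\mathcal{O}(k^0)$ terms in the $x$-equation to get $[\mathbf{A},m^{(1)}]=-\mathbf{Q}$, then use the $\mathbb{Z}_2$ and $\mathbb{Z}_3$ symmetries to identify and realify the combinations of $m^{(1)}_{31}$, $m^{(1)}_{32}$), which is exactly the route taken in the cited works \cite{Charlier-Lenells-2021,WangJMP}; the paper itself states this proposition without proof, deferring to those references. Your reality check via $m^{(1)}_{31}=\overline{m^{(1)}_{32}}$ and $\omega(\omega-1)=-i\sqrt{3}$ is accurate, and your caveat that the prefactors $\tfrac{3}{2}$ and $\tfrac{1}{2\omega(\omega-1)}$ are fixed by the gauge of the Lax pair in \cite{WangJMP} is the only bookkeeping left to verify.
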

    
    \subsection{Statement of the main theorem} It is noted that Refs. \cite{Wang-APDE,WangJMP} have given the long-time asymptotics of the good Boussinesq equation (\ref{good-boussinesq-1}) and the modified Boussinesq equation (\ref{mbequation}) in dispersive wave region. This work provides the long-time asymptotics of the two equations in other regions. Thus we can now state our main theorems about the Painlevé asymptotics and Painlevé transition asymptotics of the modified Boussinesq equation (\ref{mbequation}) and the good Boussinesq equation (\ref{good-boussinesq-1}) below.
    
    \begin{theorem}\label{mainthm}
        Assume $c_1, c_2$ and $c_3$ are certain positive constants, take $k_0=x/(2t)$ and $\tau=tk_0^2$, and then let $p(x,t),q(x,t)$ be the solutions of the modified Boussinesq equation (\ref{mbequation}) with initial data $p(x,0)=p_0(x),  q(x,0)=q_0(x)$ obeying the Assumption \ref{assumption}, then as $t\to+\infty$, the long-time asymptotic behaviors of the equation (\ref{mbequation}) can be described by the Clarckson-McLeod solution of the Painlev\'e IV transcendent (\ref{PIV}) as follows (see Figure \ref{Category}):
        \begin{itemize}
            \item For $\frac{|x|}{\sqrt{t}}<c_1$, the long-time asymptotic solution locates in the Painlev\'e region and the asymptotic formula below holds uniformly as $t\to\infty$:
            \begin{equation}\label{mbresult}
                	\begin{cases}
			\begin{aligned}
				& p(x,t) = \frac{\sqrt{3}}{4\sqrt{t}} \frac{P_{\rm IV}'(y) + \frac{2}{3}}{P_{\rm IV}(y)} + \mathcal{O}(t^{-1}), \\
				& q(x,t) = \frac{\sqrt{3}}{4\sqrt{t}} \left( P_{\rm IV}(y) + \frac{2}{3} y \right) + \mathcal{O}(t^{-1}),
			\end{aligned}
		\end{cases}
            \end{equation}
            where $y= -\frac{\sqrt{3} x}{2\sqrt{t}}$ and $P_{\rm IV}(y)$ is the Clarckson-McLeod solution of Painlev\'e $\rm IV$ equation (\ref{PIV}) with parameters  $\alpha = -\frac{1}{6}$ and $\beta = -\frac{2}{3}$.  
            \item For $c_1t^{\frac{1}{2}}\leq|x|\ll c_3t$, the long-time asymptotic solution locates in the Painlev\'e transition region with the same leading-order terms as that in (\ref{mbresult}) and the different error terms. 
            In particular, for $c_1t^{\frac{1}{2}}\leq|x|\leq c_2t^{\frac{3}{4}}$ $($Transition region $\rm I$$)$, the following asymptotic formula holds uniformly as $t\to\infty$:
            \begin{equation}
            \label{mbresult-2}
            \begin{cases}
            \begin{aligned}
				& p(x,t) = \frac{\sqrt{3}}{4\sqrt{t}} \frac{P_{\rm IV}'(y) + \frac{2}{3}}{P_{\rm IV}(y)} + \mathcal{O}\left(\sqrt{\tau}{t}^{-1}\right), \\
				& q(x,t) = \frac{\sqrt{3}}{4\sqrt{t}} \left( P_{\rm IV}(y) + \frac{2}{3} y \right) + \mathcal{O}\left(\sqrt{\tau}{t}^{-1}\right),
			\end{aligned}
            \end{cases}
            \end{equation}
            and for $c_2t^{\frac{3}{4}}<|x|\ll c_3 t$ $($Transition region $\rm II$$)$, the asymptotic formula is
             \begin{equation}
             \label{mbresult-3}
            \begin{cases}
            \begin{aligned}
				& p(x,t) = \frac{\sqrt{3}}{4\sqrt{t}} \frac{P_{\rm IV}'(y) + \frac{2}{3}}{P_{\rm IV}(y)} + \mathcal{O}\left({(tk_0)^{-1}}\right), \\
				& q(x,t) = \frac{\sqrt{3}}{4\sqrt{t}} \left( P_{\rm IV}(y) + \frac{2}{3} y \right) + \mathcal{O}\left({(tk_0)^{-1}}\right).
			\end{aligned}
            \end{cases}
            \end{equation}
        \end{itemize}
    \end{theorem}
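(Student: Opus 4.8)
The plan is to establish Theorem \ref{mainthm} by a Deift--Zhou nonlinear steepest descent analysis of RH problem \ref{RHPm}, the distinctive feature being that the local model near the origin is governed by the Painlev\'e IV transcendent rather than by an Airy or parabolic-cylinder function. First I would analyse the phase: using $\vartheta_{ij}=(\omega^i-\omega^j)kx+(\omega^{2i}-\omega^{2j})k^2t$ and $\omega^3=1$, the six jump matrices in $(\ref{jumps0})$ involve only the three phases $\vartheta_{21},\vartheta_{31},\vartheta_{32}$, each of which equals, up to a nonzero constant, $t\,[(k-k_\star)^2-k_\star^2]$ with $k_\star\in\{k_0,\omega k_0,\omega^2k_0\}$ and $k_0=x/(2t)$; throughout the regime $|x|\ll c_3t$ these stationary points all lie in a neighbourhood of $k=0$ that shrinks to a point as $t\to\infty$. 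Since $v_1$ and $v_4$ are not triangular, I would introduce a scalar $\delta$-function built from Cauchy integrals of $\log(1-|r_j|^2)$ over $\mathbb{R}_\pm$ and their $\mathbb{Z}_3$-images, and transform $m$ by the associated $3\times3$ diagonal factor $\Delta(k)$ (which tends to $I$ as $k\to\infty$ and is chosen compatible with the symmetries $(\ref{symmetry})$); this turns every jump into a product of purely upper/lower triangular factors whose off-diagonal entries carry the exponentials $e^{\pm\vartheta_{ij}}$.

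Reading off the signature table of $\Re\vartheta_{ij}$, I would then open lenses along portions of the six rays $\Sigma_1,\dots,\Sigma_6$, pushing each triangular factor into the half-plane where the relevant $\Re\vartheta_{ij}$ is negative, so that after this deformation the jump is $I+\mathcal{O}(e^{-ct})$ uniformly outside a fixed small disk $D_\epsilon(0)$. Outside $D_\epsilon(0)$ the global parametrix is taken to be $I$ (no solitons, $|r_j|<1$). Inside $D_\epsilon(0)$ I would rescale $k$ so that $k^2t=\mathcal{O}(1)$; the rescaled phases become quadratic polynomials in the new variable with $y$-dependent coefficients, and, using $r_j(k)\to r_j(0)$ together with the limiting form of the $\Delta$-transformation, the jump matrices converge to those of the model RH problem of Section \ref{LOcalmodel}. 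That model is solved by the Riemann--Hilbert representation of Painlev\'e IV, and its monodromy/Stokes data --- pinned down by $r_j(0)$, the $\mathbb{Z}_2$ and $\mathbb{Z}_3$ symmetries, and the requirement of solvability without poles --- are precisely those of the Clarkson--McLeod solution with $\alpha=-\tfrac{1}{6}$, $\beta=-\tfrac{2}{3}$, the model parameter being $y=-\sqrt{3}\,x/(2\sqrt{t})$, which stays bounded in the Painlev\'e region and tends to $\pm\infty$ in the transition regions.

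With both parametrices in hand I would form the ratio $R$ of the deformed solution and the parametrix. Then $R$ solves a RH problem whose jump is $I+\mathcal{O}(e^{-ct})$ on the deformed rays and, on $\partial D_\epsilon(0)$, equals $I$ plus the mismatch between the large-argument expansion of the Painlev\'e IV parametrix and $I$; this leading mismatch is of order $t^{-1/2}$, and its $1/k$-coefficient is $t^{-1/2}$ times an explicit matrix-valued function of $y$ whose entries are built from $P_{\rm IV}(y)$ and $P_{\rm IV}'(y)$. The standard small-norm theorem then gives $R=I+\mathcal{O}(t^{-1/2})$ with $1/k$-coefficient at infinity equal to that same $\mathcal{O}(t^{-1/2})$ term plus an $\mathcal{O}(t^{-1})$ correction; since $\Delta(k)\to I$ at $k=\infty$ and the lens transformation is trivial there, undoing all transformations and substituting into the reconstruction formulas $(\ref{recovermb})$ produces $(\ref{mbresult})$, the $\mathcal{O}(t^{-1})$ error in $p,q$ coming from the subleading term of $R$. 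For the transition regions I would track how this estimate degrades as $k_0$ leaves the origin: with $\tau=tk_0^2$ the rescaled stationary point sits at distance $\sqrt{\tau}$ from the center of the model, $|y|$ grows like $\sqrt{\tau}$, and the subleading coefficients of the model parametrix grow polynomially in $y$, so that the $\partial D_\epsilon(0)$-mismatch contributes $\mathcal{O}(\sqrt{\tau}\,t^{-1})$ while $\tau\lesssim\sqrt{t}$ (Transition region I); for $\sqrt{t}\lesssim\tau\ll t$ (Transition region II) one must re-choose the local structure --- effectively interpolating between the Painlev\'e IV model and the dispersive-wave parabolic-cylinder parametrix of \cite{WangJMP} at the edge of the Painlev\'e region --- and a re-expansion yields the error $\mathcal{O}((tk_0)^{-1})$, the leading terms persisting because the large-$y$ asymptotics of the Clarkson--McLeod solution join continuously to the dispersive behaviour, as treated in Section \ref{Transition}.

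\textbf{The main obstacle} I anticipate lies in the second and third paragraphs above together with the uniformity claimed at the end: constructing the $3\times3$ Painlev\'e IV local parametrix so that it respects the $\mathbb{Z}_2$ and $\mathbb{Z}_3$ symmetries, identifying its monodromy with the Clarkson--McLeod solution and thereby reading off $\alpha=-\tfrac{1}{6}$, $\beta=-\tfrac{2}{3}$, and then proving that the matching error on $\partial D_\epsilon(0)$ remains controlled uniformly as $y$ runs over all of $\mathbb{R}$ and $\tau\to\infty$, so that the Painlev\'e, transition and dispersive-wave regimes fit together seamlessly. The $3\times3$ structure with six spectral rays also makes the lens geometry and the bookkeeping of triangular factors considerably more delicate than in the classical $2\times2$ mKdV/KdV Painlev\'e analysis of \cite{DZ1993,Deift-Zhou-1994}.
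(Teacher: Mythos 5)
Your overall route coincides with the paper's: analytic splitting of the reflection coefficients, a scalar $\delta$-conjugation by the diagonal matrix $\Delta(k)$ built from Cauchy integrals of $\ln(1-|r_1|^2)$ and its $\mathbb{Z}_3$-rotations, lens opening, a Painlev\'e IV local parametrix at the origin obtained through the self-similar scaling $\lambda=-\tfrac{2\sqrt{t}}{3\sqrt{3}}k$, $y=-\tfrac{\sqrt{3}x}{2\sqrt{t}}$, a small-norm argument for $\hat m=m^{(2)}(m^p)^{-1}$, and substitution into the reconstruction formulas (\ref{recovermb}). However, the step you yourself flag as ``the main obstacle'' is not an obstacle you may defer: it is the mathematical core of the theorem and is left entirely unexecuted. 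The paper resolves it in Appendix \ref{Appendix} by (a) writing the model jumps $v_j^p$ with Stokes data $s_1=r_1^*(0)$, $s_3=-r_2^*(0)$ and checking the cyclic constraint $v_1^pv_2^p\cdots v_6^p=I$, which forces $r_2^*(0)+r_1(0)+r_2(0)r_1^*(0)=0$; (b) using the $\mathbb{Z}_2$/$\mathbb{Z}_3$ symmetries to reduce $m_1^p(y)$ to two scalar functions $\varphi_2=f+ig$ and $\varphi_3$; and (c) running the Lax-pair compatibility $U_\lambda-A_y+[U,A]=0$ for $\Phi=m^pe^{3y\lambda J+\frac{27}{4}\lambda^2J^2}$ to obtain the first-order system whose reduction is Painlev\'e IV with $\alpha=-\tfrac16$, $\beta=-\tfrac23$. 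Without this computation the explicit coefficients in (\ref{mbresult}) cannot be produced, so as written your argument does not yield the stated formulas. A secondary imprecision: the jumps outside $D_\epsilon(0)$ are not all $I+\mathcal{O}(e^{-ct})$; the contours carrying the non-analytic remainders $r_{j,r}$, $\rho_{1,r}$ are only $I+\mathcal{O}(t^{-N-3/2})$, which is what the paper's Lemma \ref{w estimate} actually uses.

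For Transition region II your route genuinely departs from the paper's, and in a direction that creates rather than removes work. You propose to interpolate between the Painlev\'e IV model and a parabolic-cylinder parametrix; the paper instead keeps the identical Painlev\'e IV parametrix and merely enlarges the matching disk to $\mathbb{D}=\{|k|<k_0+\epsilon\}$, so that the error $\mathcal{O}((tk_0)^{-1})$ falls out of the modified $L^1$/$L^\infty$ estimates of Lemma \ref{v2-vptau} with no new local model. The consistency with the dispersive-wave asymptotics (\ref{pq_longtime}) is then verified only a posteriori, at the level of leading terms, via the Its--Kapaev large-negative-$y$ asymptotics (\ref{p4_long_asymp}) of the Clarkson--McLeod solution; no interpolating parametrix is constructed, and the paper explicitly notes that the error terms in the two regions do not match uniformly. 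Your proposed interpolation would have to be made precise before it could be judged, whereas the paper's disk-enlargement is an elementary modification of the region-I estimates; I would recommend adopting the latter.
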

    \begin{proof}
        The proof of this theorem is the main task of the present work, which can be found in Section \ref{LOcalmodel} and Section \ref{Transition}. 
    \end{proof}

    \begin{theorem}\label{mainthm-2}
        Let $u(x,t)$ be the solution of the good Boussinesq equation (\ref{good-boussinesq-1}) or (\ref{good-boussinesq-2}) with initial data $u(x,0)=u_0(x), w(x,0)=w_0(x)$ gotten by the Miura transformation (\ref{Miuratranf}) and the initial data $p(x,0)=p_0(x),  q(x,0)=q_0(x)$ obeying the Assumption \ref{assumption},
        then for $\frac{|x|}{\sqrt{t}}<c_1$ with $c_1>0$, as $t\to+\infty$, the long-time asymptotic behavior of the equation (\ref{good-boussinesq-1}) in the Painlev\'e region can be described by the Clarckson-McLeod solution $P_{\rm IV}(y)$ with $y= -\frac{\sqrt{3} x}{2\sqrt{t}}$ of the Painlev\'e IV equation (\ref{PIV}) with parameters  $\alpha = -\frac{1}{6}$ and $\beta = -\frac{2}{3}$ as follows:
        \begin{equation}\label{gbresult}
            u(x,t) = \small -\frac{\left(3 P'_{\rm IV}(y) + 2\right)^2 - 9 P_{\rm IV}^4(y) - 36 y P_{\rm IV}^3(y) - 20 y^2 P_{\rm IV}^2(y)}{32 t P_{\rm IV}^2(y)}+\mathcal{O}\left(t^{-\frac{3}{2}}\right).  
            \end{equation}
    \end{theorem}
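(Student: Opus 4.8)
The plan is to derive Theorem~\ref{mainthm-2} as a corollary of Theorem~\ref{mainthm} by pushing the asymptotics of $(p,q)$ through the Miura transformation~(\ref{Miuratranf}). Concretely, in the Painlev\'e region $|x|/\sqrt{t}<c_1$ we already know from~(\ref{mbresult}) that
\[
p(x,t)=\frac{\sqrt{3}}{4\sqrt t}\,\frac{P_{\rm IV}'(y)+\tfrac23}{P_{\rm IV}(y)}+\mathcal{O}(t^{-1}),\qquad
q(x,t)=\frac{\sqrt{3}}{4\sqrt t}\Big(P_{\rm IV}(y)+\tfrac23 y\Big)+\mathcal{O}(t^{-1}),
\]
with $y=-\tfrac{\sqrt3 x}{2\sqrt t}$. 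The formula for $u$ in~(\ref{Miuratranf}) reads $u=-p_x-\tfrac12 p^2-\tfrac32 q^2$, so the first task is to compute $p_x$. Since $\partial_x y=-\tfrac{\sqrt3}{2\sqrt t}$, differentiating the leading term of $p$ in $x$ produces a factor $\tfrac{1}{\sqrt t}\cdot\tfrac{1}{\sqrt t}=\tfrac1t$, i.e. $p_x$, $p^2$ and $q^2$ are all of size $\mathcal{O}(t^{-1})$, which is consistent with the claimed $\mathcal{O}(t^{-1})$ size of $u$ and the $\mathcal{O}(t^{-3/2})$ error. Thus the proof is essentially (i) justify that one may differentiate the asymptotic expansion~(\ref{mbresult}) term by term in $x$ with an error $\mathcal{O}(t^{-3/2})$ for $p_x$, (ii) substitute, and (iii) simplify the resulting rational expression in $P_{\rm IV}(y),P_{\rm IV}'(y),y$ using the Painlev\'e~IV equation~(\ref{PIV}) with $\alpha=-\tfrac16,\beta=-\tfrac23$ to reach~(\ref{gbresult}).

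The computation in step~(iii) goes as follows. Write $P=P_{\rm IV}(y)$, $P'=P_{\rm IV}'(y)$. From the leading terms,
\[
-\tfrac12 p^2=-\frac{3}{32\,t}\,\frac{(P'+\tfrac23)^2}{P^2},\qquad
-\tfrac32 q^2=-\frac{9}{32\,t}\Big(P+\tfrac23 y\Big)^2 .
\]
For $-p_x$, using $p=\tfrac{\sqrt3}{4\sqrt t}\,\tfrac{P'+2/3}{P}+\mathcal O(t^{-1})$ and $\partial_x=-\tfrac{\sqrt3}{2\sqrt t}\partial_y$, one gets
\[
-p_x=\frac{3}{8\,t}\,\frac{d}{dy}\!\left(\frac{P'+\tfrac23}{P}\right)+\mathcal O(t^{-3/2})
=\frac{3}{8\,t}\,\frac{P''P-(P'+\tfrac23)P'}{P^2}+\mathcal O(t^{-3/2}),
\]
and $P''$ is replaced by the right-hand side of~(\ref{PIV}) with the chosen parameters, namely
$P''=\tfrac{(P')^2}{2P}+\tfrac32 P^3+4yP^2+(2y^2-\tfrac13)P-\tfrac{2}{9P}$ (since $-4\alpha+\beta=\tfrac23-\tfrac23=0$ wait: $-4\alpha+\beta=\tfrac{4}{6}-\tfrac23=0$, so the coefficient of $P$ is $2y^2$, and $\beta^2/(2P)=\tfrac{(2/3)^2}{2P}=\tfrac{2}{9P}$). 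Substituting this, collecting all three contributions over the common denominator $32\,t\,P^2$, and expanding $\big(3P'+2\big)^2=9(P'+\tfrac23)^2$, the cross terms involving $P'$ and the $1/P$ singular term must cancel, leaving precisely
\[
u=-\frac{(3P'+2)^2-9P^4-36yP^3-20y^2P^2}{32\,t\,P^2}+\mathcal O(t^{-3/2}),
\]
which is~(\ref{gbresult}). I expect this algebraic reduction — in particular verifying that the $y^2P^2$ coefficient comes out as $-20$ after combining the $-4yP^2$-type term from $P''$ with the $-\tfrac32 q^2$ piece (whose expansion of $(P+\tfrac23 y)^2$ contributes $+\tfrac43 yP$ and $+\tfrac49 y^2$ terms) — to be the one place where a sign slip is easy, so it should be carried out carefully.

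The genuine analytic content, and the main obstacle, is step~(i): differentiated asymptotics are not automatic. The clean way is \emph{not} to differentiate the final formula~(\ref{mbresult}), but to go back to the Riemann--Hilbert representation. Recall from~(\ref{recovermb}) that $p$ and $q$ are read off from the $k^{-1}$ coefficient of $m(x,t;k)$ at $k=\infty$; by the general theory, $p_x$ (and similarly all $x$-derivatives entering the Miura map) is likewise a polynomial in the coefficients $m_1(x,t),m_2(x,t)$ of the large-$k$ expansion $m=I+m_1/k+m_2/k^2+\cdots$, obtained by matching powers of $k$ in the Lax equation $m_x+\text{(linear-in-}k)\,[\,\cdot\,]=\cdots$ satisfied by $m$ — exactly as $u=-p_x-\tfrac12p^2-\tfrac32q^2$ was itself built to be the coefficient extracted from the second Lax equation. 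Hence $u(x,t)$ has its \emph{own} direct RH reconstruction formula in terms of $m_1,m_2$, and the steepest-descent analysis of Sections~\ref{Trans}--\ref{LOcalmodel} — which already produces $m(x,t;k)=m^{\rm err}(x,t;k)\,m^{(0)}(x,t;k)\cdots$ with the Painlev\'e~IV parametrix $m^{(0)}$ near $k=0$ and an error $m^{\rm err}=I+\mathcal O(t^{-1/2})$ on the relevant contour — yields the large-$k$ coefficients $m_1,m_2$ with the \emph{same} uniform error control used for $p,q$ in Theorem~\ref{mainthm}. Plugging these into the reconstruction formula for $u$ and identifying the Painlev\'e~IV building blocks of $m^{(0)}$ (the RH problem for the Clarkson--McLeod solution, as in~\cite{Xia-Xu-Zhao-2023,Clarkson-Mcleod}) with $P_{\rm IV}(y),P_{\rm IV}'(y)$ then gives~(\ref{gbresult}) rigorously; in practice, since the error terms multiply and one extra factor of $t^{-1/2}$ is gained from the overall $t^{-1}$ prefactor in $u$ versus $t^{-1/2}$ in $p,q$, the error upgrades from $\mathcal O(t^{-1})$ to $\mathcal O(t^{-3/2})$, matching the statement. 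A lighter alternative, if one prefers to stay at the level of Theorem~\ref{mainthm}, is to note that the estimates in~(\ref{mbresult}) are in fact established uniformly on a full neighbourhood in the self-similar variable and can be differentiated there; either route closes the proof, and the only non-routine point is making the ``differentiated expansion'' legitimate, which the RH derivation handles automatically.
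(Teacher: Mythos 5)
Your proposal is correct and takes essentially the same route as the paper, whose entire proof is the one-line remark that (\ref{gbresult}) ``follows directly from (\ref{mbresult}) and the Miura transformation (\ref{Miuratranf})''; your explicit algebra (including the cancellation producing the $-9(P'+\tfrac23)^2=-(3P'+2)^2$ term and the $24-4=20$ coefficient of $y^2P_{\rm IV}^2$) checks out. You are in fact more careful than the paper on the one genuine subtlety — that the $\mathcal O(t^{-1})$ error in $p$ cannot simply be differentiated in $x$ — and your suggested fix via a direct Riemann--Hilbert reconstruction formula for $u$ (or for $p_x$) from the large-$k$ coefficients $m_1,m_2$ is the right way to close that gap, which the paper leaves implicit.
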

    \begin{proof}
        The asymptotic formula in (\ref{gbresult}) follows directly from (\ref{mbresult}) and the Miura transformation (\ref{Miuratranf}).
    \end{proof}

    \begin{remark}
    From the proof of the Painlevé transition region in Section \ref{Transition}, it can be seen that the asymptotic formula of the good Boussinesq equation (\ref{good-boussinesq-1}) in Painlevé transition region is not rigorous because the absolute of reflection coefficients is greater than or equal to one. But the leading-order term in the Painlevé transition region is correct. That is,  for $c_1t^{\frac{1}{2}}\leq|x|\ll c_3t$, the leading-order asymptotic formula of the good Boussinesq equation (\ref{good-boussinesq-1}) behaves like 
    \begin{equation}\label{gbresult-sim}
            u(x,t) \sim \small -\frac{\left(3 P'_{\rm IV}(y) + 2\right)^2 - 9 P_{\rm IV}^4(y) - 36 y P_{\rm IV}^3(y) - 20 y^2 P_{\rm IV}^2(y)}{32 t P_{\rm IV}^2(y)}.  
    \end{equation}
    Noting the asymptotic formula (\ref{p4_long_asymp}) of the solution $P_{\mathrm{IV}}(y)$ to equation (\ref{PIV}) for \( y \to -\infty \) by Its and Kapaev \cite{ItsJPA},  the asymptotic formula (\ref{gbresult-sim}) becomes 
	\begin{align*}
		u(x,t) &\sim-\frac{\left(3P_{\text{IV}}'(y) + 2\right)^2 }{32tP_{\rm{IV}}^2(y)} +\frac{  9P_{\rm{IV}}^4(y) + 36yP_{\rm{IV}}^3(y)+ 20y^2P_{\rm{IV}}^2(y)}{32tP_{\rm{IV}}^2(y)}\\
		&\sim -\frac{\sqrt{3}\nu\left(\sqrt{3}\nu-3tk_0^2 \right)\sin^2 \Theta  }{4t^2k_0^2\left( \sqrt{t}k_0+\sqrt{2\sqrt{3}\nu}\cos\Theta\right)^2 }-\frac{3^{5/4}k_0\sqrt{\nu}\cos\Theta}{\sqrt{2t}}+\frac{3\sqrt{3}\nu\cos^2\Theta}{4t}\\
		&\sim-\frac{3^{5/4}k_0\sqrt{\nu}}{\sqrt{2t}}\sin\left(\frac{19\pi}{12}-\sqrt{3}k_0^2 t+\nu\ln(6\sqrt{3}tk_0^2)-\arg \Gamma \left(i\nu\right) -\arg q \right),
	\end{align*}
   which is consistent with the leading-order term of the asymptotic formula in dispersive wave region given by Charlier, Lenells and one of the present authors \cite{Wang-APDE}, see (\ref{q_longtime-mathch}) for details of mBoussinesq equation (\ref{mbequation}). 
    \end{remark}

\begin{figure}[h!]
    \centering
     \includegraphics[width=11cm]{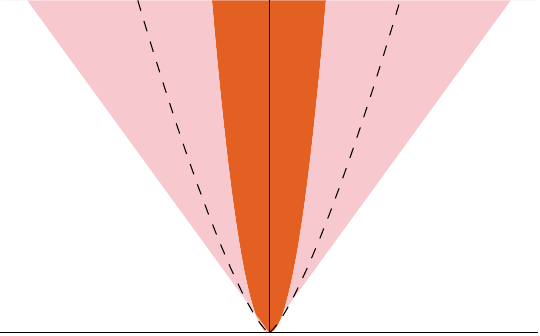}
    \put(-175,160){\fontsize{10}{7}\selectfont \emph{Painlev\'e}}
    \put(-173,150){\fontsize{10}{7}\selectfont \emph{Region}}
     \put(-117,170){\fontsize{10}{7}\selectfont \emph{Painlev\'e}}
    \put(-117,160){\fontsize{10}{7}\selectfont \emph{Transition}}
    \put(-85,150){\fontsize{8}{7}\selectfont \emph{Region $\rm II$}}
    \put(-127,150){\fontsize{8}{7}\selectfont \emph{Region $\rm I$}}
    \put(-242,170){\fontsize{10}{7}\selectfont \emph{Painlev\'e}}
    \put(-242,160){\fontsize{10}{7}\selectfont \emph{Transition}}
    \put(-218,150){\fontsize{8}{7}\selectfont  \emph{Region $\rm I$}}
    \put(-258,150){\fontsize{8}{7}\selectfont  \emph{Region $\rm II$}}
    \put(-292,60){\fontsize{10}{7}\selectfont \emph{Dispersive Wave}}
    \put(-290,50){\fontsize{10}{7}\selectfont \emph{Region}}
    \put(-80,60){\fontsize{10}{7}\selectfont \emph{Dispersive Wave}}
    \put(-80,50){\fontsize{10}{7}\selectfont\emph{Region}}
    \put(-5,-10){\fontsize{10}{7}\selectfont $x$}
    \put(-159,-10){\fontsize{10}{7}\selectfont $0$}
    \put(-159,200){\fontsize{10}{7}\selectfont $t$}
    \put(-130,200){\fontsize{7}{7}\selectfont $c_1t^{1/2}$}
    \put(-200,200){\fontsize{7}{7}\selectfont $-c_1t^{1/2}$}
    \put(-90,200){\fontsize{7}{7}\selectfont $c_2 t^{3/4}$}
    \put(-20,200){\fontsize{7}{7}\selectfont $c_3t$}
    \put(-310,200){\fontsize{7}{7}\selectfont $-c_3t$}
    \put(-245,200){\fontsize{7}{7}\selectfont $-c_2t^{3/4}$}
    \caption{{\protect\small The asymptotics regions in the upper half $(x,t)$-plane for the modified Boussinesq equation (\ref{mbequation}).}}
    \label{Category}
\end{figure}
    \subsection{Numerical results} 
    To verify the validity of the Theorem \ref{mainthm} and Theorem \ref{mainthm-2}, it is necessary to compare the leading-order terms in Painlev\'e asymptotics with the direct numerical simulations. Firstly,
    the numerical results are presented below by taking the initial data of the modified Boussinesq equation (\ref{mbequation}) of the form:  
\begin{equation}\label{initialmb}
\begin{cases}
\begin{aligned}
    & p(x,0) =p_0(x)= -\frac{1}{10} \exp\left(-\frac{x^2}{20}\right),\\
    & q(x,0)=q_0(x) = \frac{1}{10} \exp\left(-\frac{x^2}{20}\right).
\end{aligned}
\end{cases}
\end{equation}  
Under this initial condition, Figure~\ref{mBvsP4100} compares the asymptotic formulas based on the Painlevé $\rm IV$ equation in (\ref{mbresult})-(\ref{mbresult-3}) with the results of direct numerical simulations for \( t = 100 \) and \( t = 300 \). Specifically, the solid green and blue lines represent the direct numerical results for \( q(x,t) \) and \( p(x,t) \), respectively, while the dashed red lines correspond to the simulation results of the leading-order terms in (\ref{mbresult})-(\ref{mbresult-3}). As time increases, it is observed that the long-time asymptotic formula closely aligns with the results obtained by direct numerical simulations, which demonstrates the correctness of the theoretical
results in Theorem \ref{mainthm}.

\begin{figure}[h!]
    \centering
    \includegraphics[width=7.9cm]{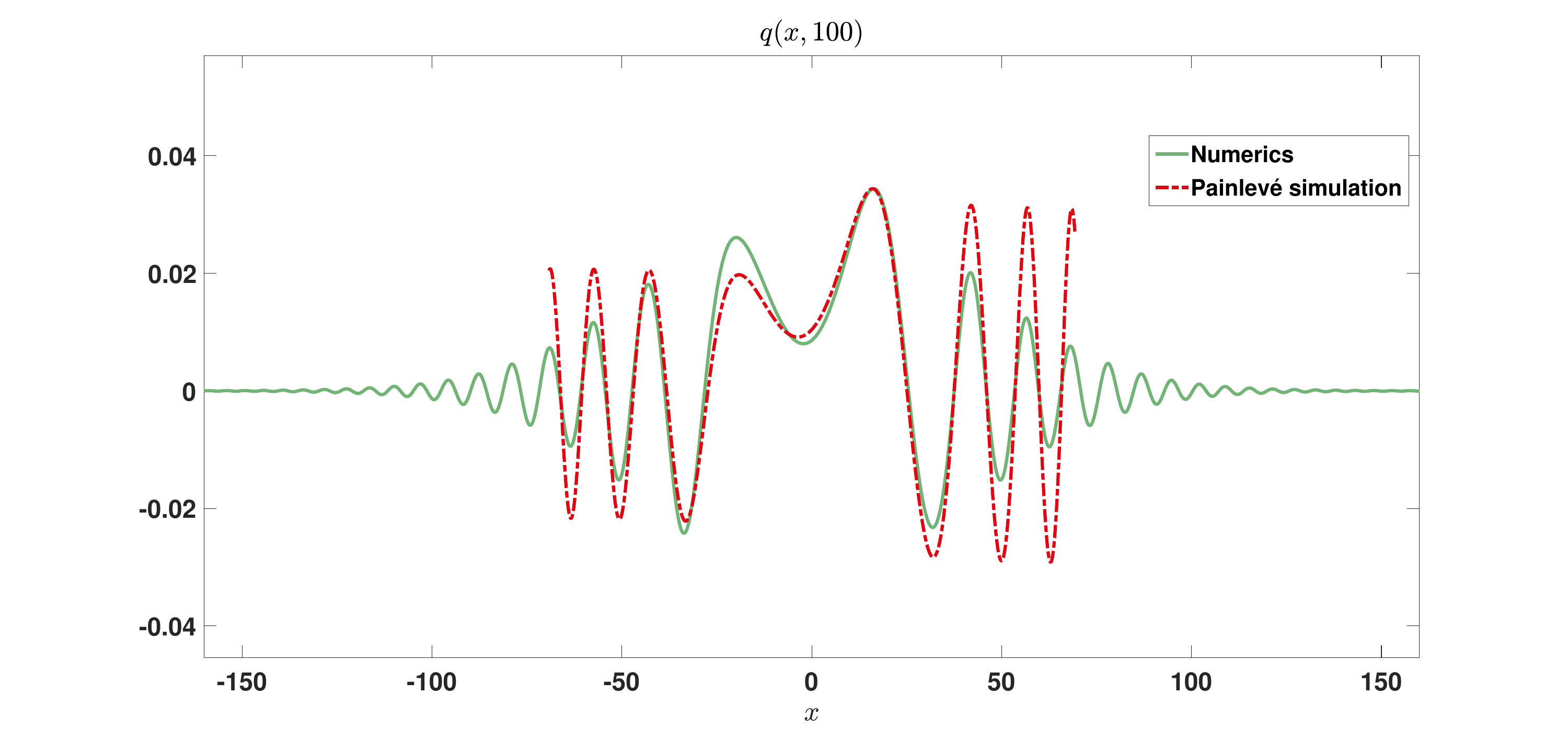}
    \includegraphics[width=7.9cm]{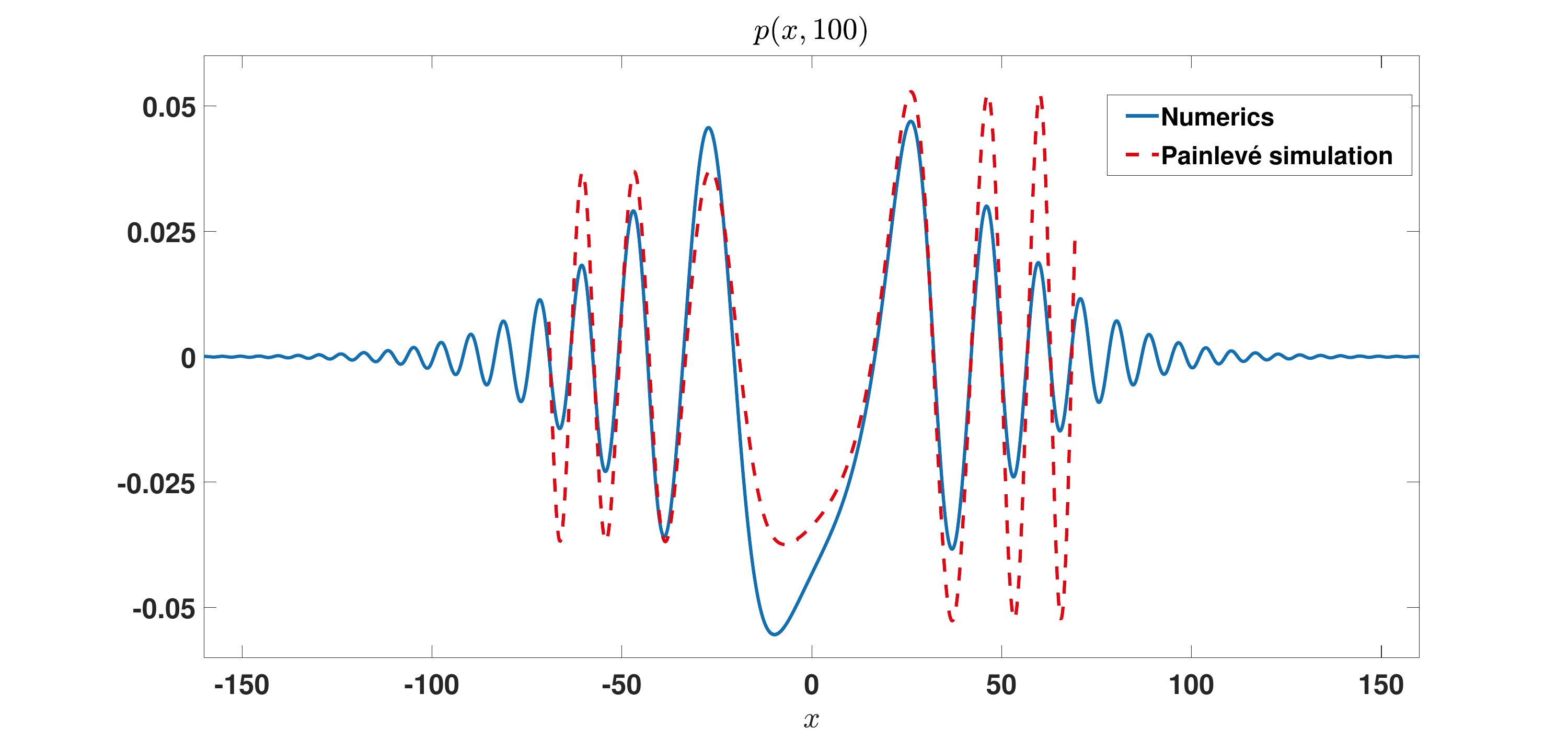}
    \includegraphics[width=7.9cm]{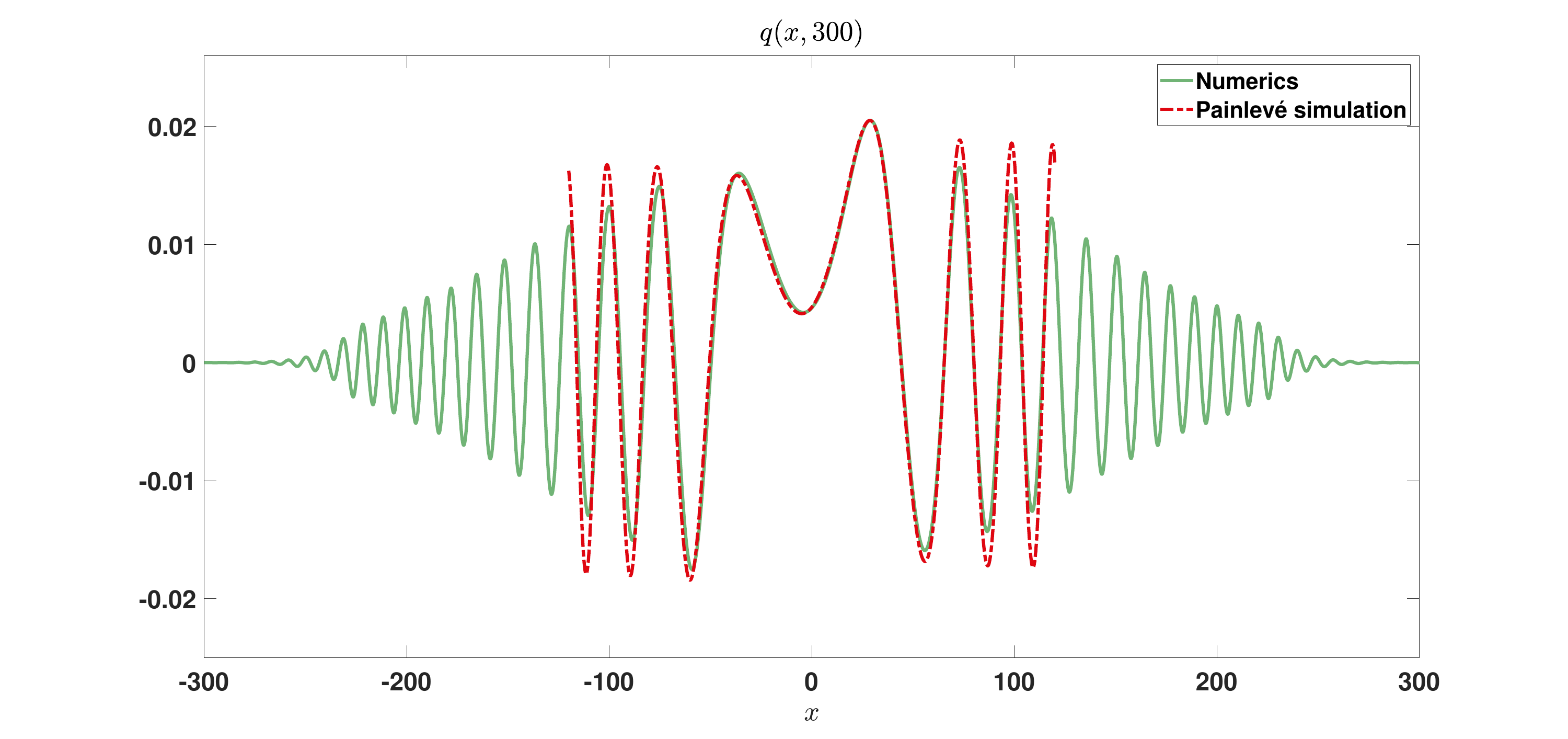}
    \includegraphics[width=7.9cm]{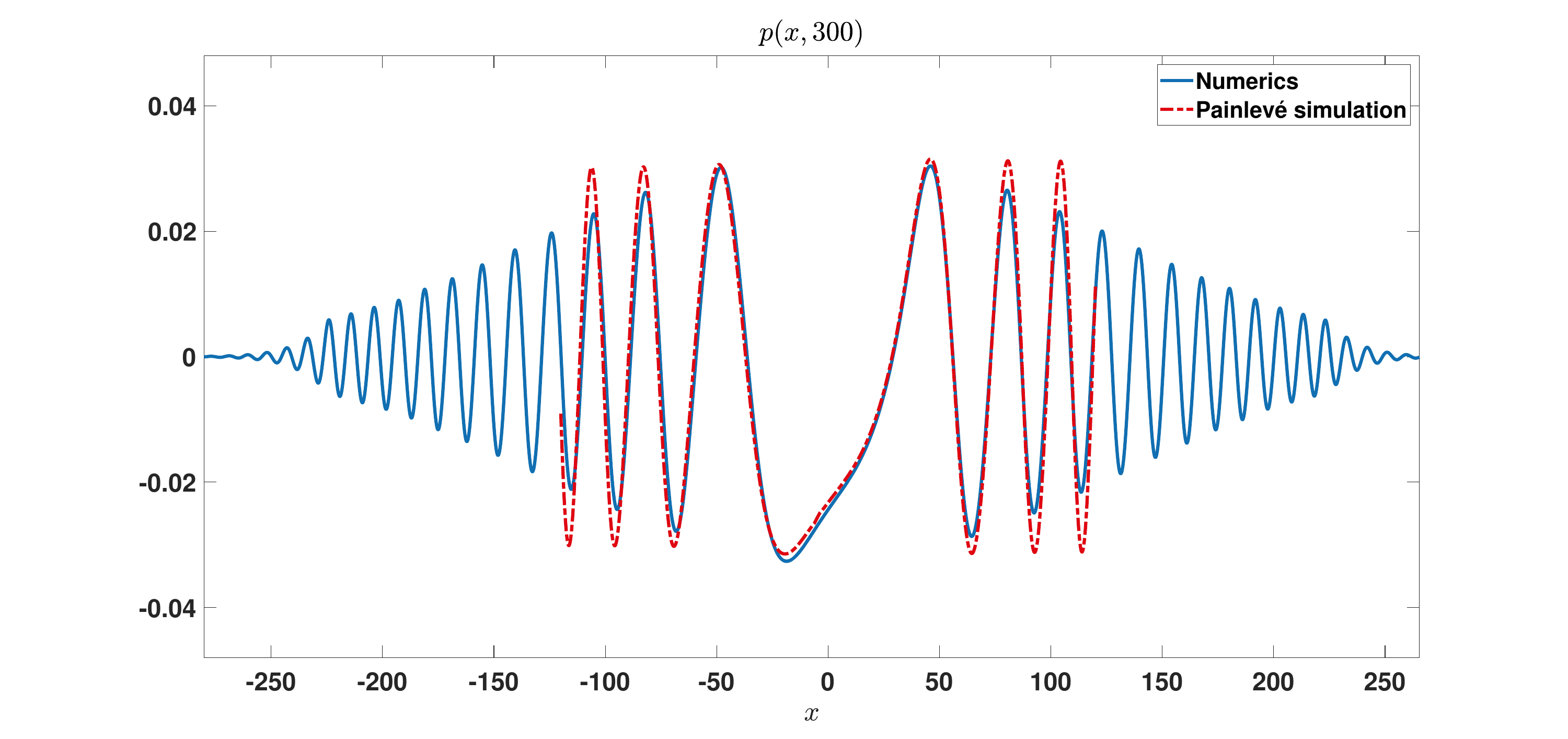}
    \caption{{\protect\small Comparisons of the asymptotic solutions based on the Painlevé $\rm IV$ equation in~(\ref{mbresult})-(\ref{mbresult-3}) with the full numerical simulations of the modified Boussinesq equation~(\ref{mbequation}) under the initial condition~(\ref{initialmb}) at \( t = 100 \) and \( t = 300 \). The solid green and blue lines represent the results of direct numerical simulations on \( q(x,t) \) and \( p(x,t) \), respectively, while the dashed red lines represent the asymptotic solutions.}}
    \label{mBvsP4100}
\end{figure}
Secondly, Figure~\ref{gBvsP4} illustrates the comparisons between the asymptotic formulas in (\ref{gbresult}) and the numerical simulations of the good Boussinesq equation~(\ref{good-boussinesq-1}) at \( t = 100 \) and \( t = 300 \), respectively, based on the following initial data:
\begin{equation}\label{initialgb}
  \begin{cases}
\begin{aligned}
    u(x,0) &= -\frac{1}{50} \exp\left(-\frac{x^2}{10}\right) - \frac{1}{100}x \exp\left(-\frac{x^2}{20}\right), \\[1ex]
    w(x,0) &= -\frac{1}{250}x \exp\left(-\frac{x^2}{10}\right) + \left(\frac{1}{100} - \frac{1}{1000}x^2\right) \exp\left(-\frac{x^2}{20}\right),
\end{aligned}
\end{cases}  
\end{equation}
which is obtained by combining the Miura transformation~(\ref{Miuratranf}) with the initial condition in~(\ref{initialmb}). In this figure, the solid yellow line represents the full numerical simulations of the solution \( u(x,t) \) of the good Boussinesq equation~(\ref{good-boussinesq-1}), while the dashed red lines represent the asymptotic formula in~(\ref{gbresult}). It is remarkably evident that the long-time asymptotic formulas match closely with the results from direct numerical simulations. This strong agreement demonstrates the accuracy of the asymptotic solution in (\ref{gbresult}) in Theorem \ref{mainthm-2}.

\begin{figure}[h!]
    \centering
    \includegraphics[width=7.9cm]{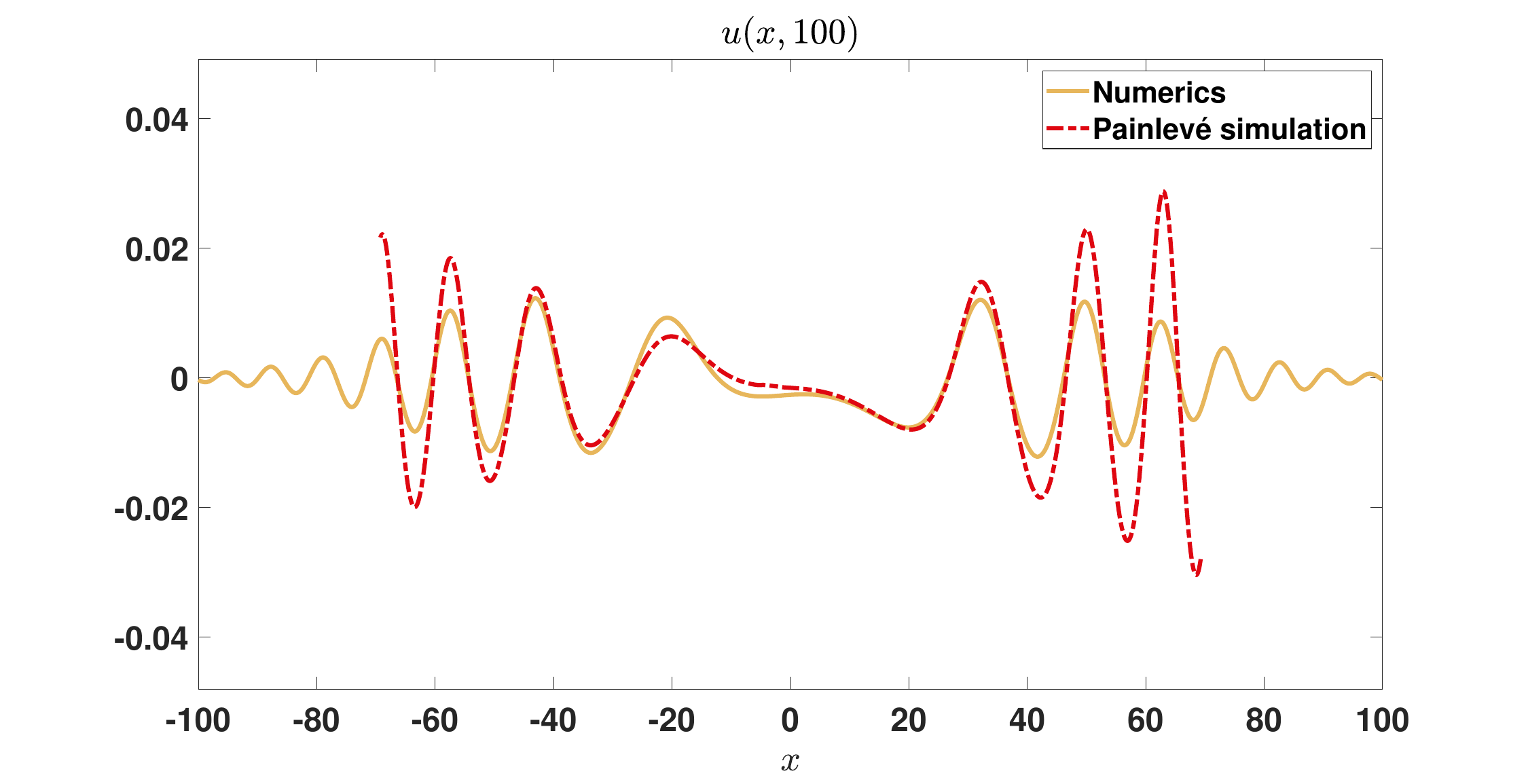}
    \includegraphics[width=7.9cm]{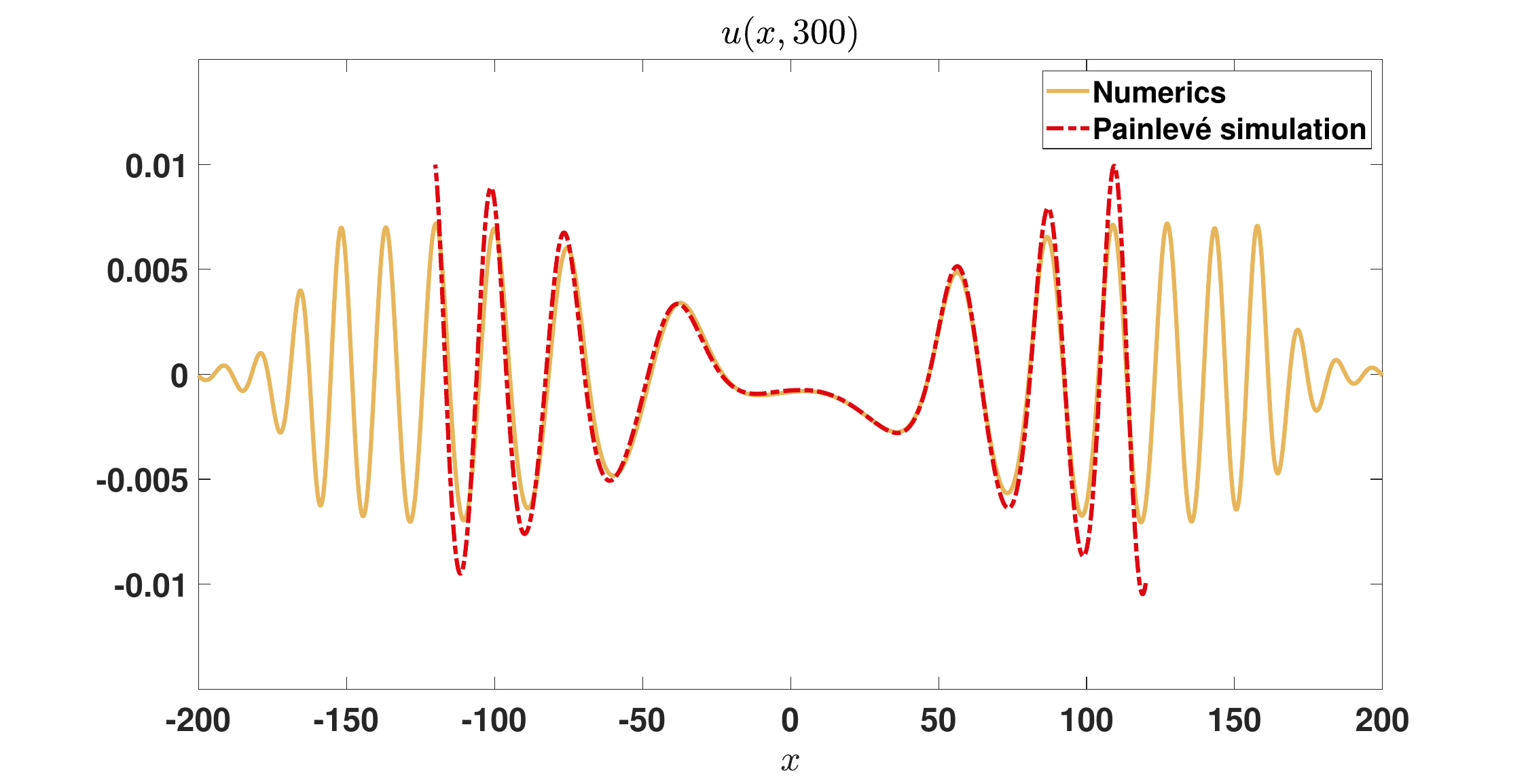}
    \caption{{\protect\small Comparisons of the asymptotic solutions based on the Painlevé~$\rm IV$ equation in (\ref{gbresult}) with the full numerical simulations of the good Boussinesq equation (\ref{good-boussinesq-1}) at \( t = 100 \) and \( t = 300 \), respectively. The left and right panels correspond to two respective time points.}}
    \label{gBvsP4}
\end{figure}

\begin{remark} In {\rm 2020}, Charlier, Lenells and one of the present authors \cite{Wang-APDE} investigated the long-time asymptotic behavior of solutions to the initial value problem of the good Boussinesq equation (\ref{good-boussinesq-1}), then in {\rm 2022}, the present authors \cite{WangJMP} considered the same issue for the modified Boussinesq equation (\ref{mbequation}), focusing primarily on the dispersive wave region. However, these studies did not fully resolve the open problem posed by Deift \cite{Deift-2008} regarding the long-time asymptotics of solutions to the good Boussinesq equation with non-self-adjoint Lax operator, as the behavior inside the Painlev\'e region (i.e., $\frac{|x|}{t}<c_3$ for $c_3>0$) remains unsolved. This work addresses this issue, demonstrating that in the region $\frac{|x|}{t}<c_3$ for certain $c_3>0$ in Figure $\ref{Category}$, the solutions to the initial value problems for both the good Boussinesq equation and the modified Boussinesq equation can be described by the Clarkson-McLeod solution of the fourth Painlev\'e equation (\ref{PIV}) with parameters  $\alpha = -\frac{1}{6}$ and $\beta = -\frac{2}{3}$. See Theorem \ref{mainthm} and Theorem \ref{mainthm-2} for details.      
       
\end{remark}

\section{Deformations of the RH problem \ref{RHPm} }\label{Trans}

In this section, we employ the Deift-Zhou steepest descent method \cite{DZ1993} to analyze the RH problem \ref{RHPm} by performing a series of transformations denoted by \( m^{(j)}(x,t;k) \), with the corresponding jump matrices represented by \( v^{(j)}(x,t;k) \). Additionally, it is noted that the symmetries of the RH problem described in (\ref{symmetry}) imply that it suffices to focus on transformations along the real line. For \( \frac{|x|}{t} < c_3 \) with $c_3>0$ and \( x > 0 \), it will be demonstrated that the modified Boussinesq equation (\ref{mbequation}) is connected to the fourth Painlev\'e equation (\ref{PIV}).

\subsection{The first transformation}
Denote \(\zeta := \frac{x}{t}\) for \(x \geq 0\). Recalling the definition of phase functions \(\vartheta_{ij}(x,t;k)\) for $1\le j<i\le 3$ in (\ref{jumps0}), the critical points of these phase functions can be determined as  
\(k_0 := \frac{\zeta}{2}, \omega k_0\), and \(\omega^2 k_0\). So rewrite the phase functions \(\vartheta_{ij}(x,t;k)\) to be   
\[
\vartheta_{ij}(x, t; k) = t\left[\left(\omega^i - \omega^j\right)k \zeta + \left(\omega^{2i} - \omega^{2j}\right)k^2\right] := t \Phi_{ij}(\zeta, k),\quad 1\le j<i\le 3.
\]  
It is key to figure out the signs of the real parts of the new phase functions $\Phi_{ij}(\zeta, k)$. Thus we illustrate the sign signatures of \(\Re\Phi_{ij}(\zeta, k)\) in Figure~\ref{sign of phi}. Based on the signatures of the phase functions, one can perform an analytic continuation of the reflection coefficients and further deform the RH problem \ref{RHPm}.
	\begin{figure}[h]
		\centering
		\subfigure{
			\includegraphics[width=0.3\textwidth]{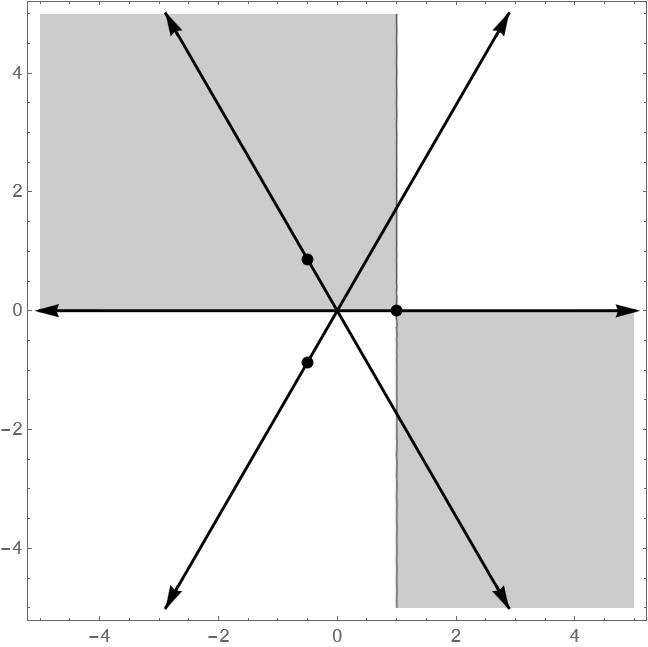}
			\put(-7,65){\small $\R$}
			\put(-50,62){\small $k_0$}
			\put(-75,55){\small $\omega^2 k_0$}
			\put(-71,80){\small $\omega k_0$}
			\put(-40,90){ $U_1$}
			\put(-90,90){ $U_2$}
			\put(-90,30){ $U_3$}
			\put(-40,30){ $U_4$}
			\put(-40,75){\fontsize{6}{7}\selectfont  $\Re\Phi_{21}<0$}
			\put(-112,75){\fontsize{6}{7}\selectfont $\Re\Phi_{21}>0$}
			\put(-112,52){\fontsize{6}{7}\selectfont $\Re\Phi_{21}<0$}
			\put(-40,52){\fontsize{6}{7}\selectfont $\Re\Phi_{21}>0$}
		}
		\hspace{0\textwidth}
		\subfigure{
			\includegraphics[width=0.3\textwidth]{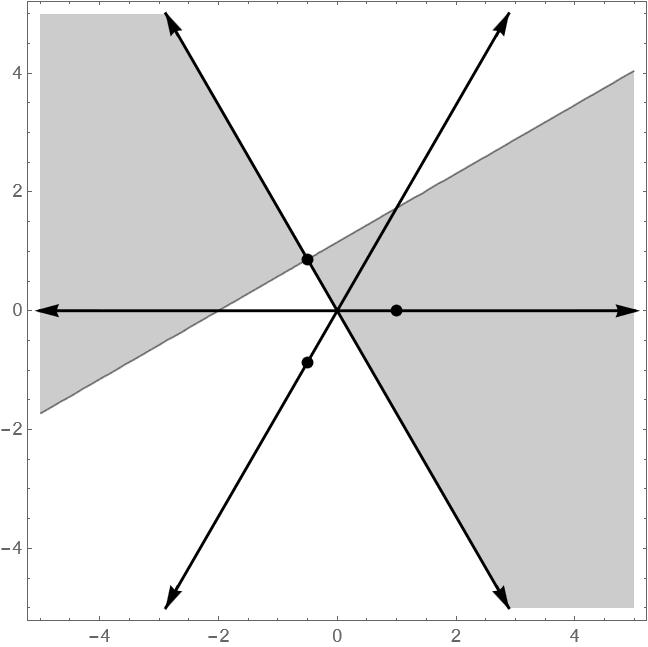}
			\put(-98,115){\small $\omega\R$}
			\put(-7,65){\small $\R$}
			\put(-50,62){\small $k_0$}
			\put(-75,55){\small $\omega^2 k_0$}
			\put(-71,80){\small $\omega k_0$}
			\put(-40,48){\rotatebox{-60}{\fontsize{6}{6}\selectfont  $\Re\Phi_{31}>0$}}
			\put(-80,116){\rotatebox{-60}{\fontsize{6}{6}\selectfont $\Re\Phi_{31}<0$}}
			\put(-60,40){\rotatebox{-60}{\fontsize{6}{6}\selectfont $\Re\Phi_{31}<0$}}
			\put(-100,108){\rotatebox{-60}{\fontsize{6}{6}\selectfont $\Re\Phi_{31}>0$}}
		}
		\hspace{0\textwidth}
		\subfigure{
			\includegraphics[width=0.3\textwidth]{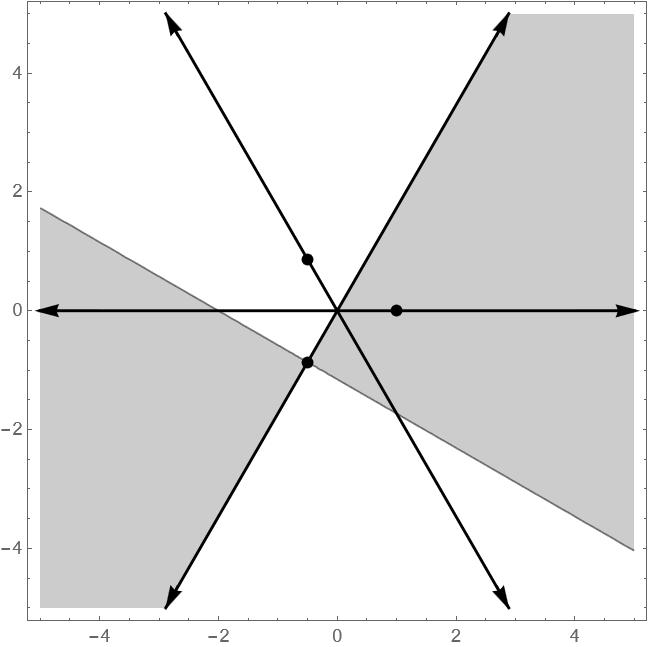}
			\put(-100,7){\small $\omega^2\R$}
			\put(-7,65){\small $\R$}
			\put(-50,62){\small $k_0$}
			\put(-75,55){\small $\omega^2 k_0$}
			\put(-71,80){\small $\omega k_0$}
			\put(-80,15){\rotatebox{60}{\fontsize{6}{6}\selectfont  {$\Re\Phi_{32}<0$}}}
			\put(-40,80){\rotatebox{60}{\fontsize{6}{6}\selectfont $\Re\Phi_{32}>0$}}
			\put(-100,35){\rotatebox{60}{\fontsize{6}{6}\selectfont $\Re\Phi_{32}>0$}}
			\put(-57,100){\rotatebox{60}{\fontsize{6}{6}\selectfont $\Re\Phi_{32}<0$}}
		}
		\caption{From left to right: The signature tables and saddle points of the new phase functions $\Phi_{21}$, $\Phi_{31}$, and $\Phi_{32}$  for $\zeta=1$. The grey regions correspond to $\{k \mid \Re \Phi_{ij} > 0\}$, while the white regions correspond to $\{k \mid \Re \Phi_{ij} < 0\}$. }
		\label{sign of phi}	
	\end{figure}
    
	Denote  
\[
\rho_j(k) = \frac{r_j(k)}{1 - r_j(k){r}_j^*(k)}, \quad j = 1, 2,
\]  
and introduce the open sets \(U_j\) for \(j = 1,2, 3, 4\), as depicted in the leftmost picture in Figure~\ref{sign of phi}.
\begin{lemma}\label{r decomposition}
		Denote the following decompositions of the scattering data:  
\[
\begin{aligned}
    &r_2^*(k) = r_{2,a}^*(k) + r_{2,r}^*(k), && k \in (-\infty, 0], \\
    &r_1(k) = r_{1,a}(k) + r_{1,r}(k), && k \in [0, k_0], \\
    &\rho_1(k) = \rho_{1,a}(k) + \rho_{1,r}(k), && k \in [k_0, \infty),
\end{aligned}
\]  
which satisfy the following properties:  

\begin{enumerate}  
    \item For each \(\zeta \in [0,A]\) and \(t > 0\), with some positive constant \(A\), the functions \(r_{1,a}(k)\) and \(r_{2,a}^*(k)\) are well-defined, continuous for \(k \in \bar{U}_2\) and analytic for \(k \in U_2\). Moreover, \(\rho_{1,a}(k)\) is well-defined, continuous for \(k \in \bar{U}_4\) and analytic for \(k \in U_4\).  

    \item Furthermore, let \(N \geq 0\) be any fixed integer, then the functions \(r_{2,a}^*(k)\), \(r_{1,a}(k)\) and \(\rho_{1,a}(k)\) obey the following estimates:  
    \[
    \begin{array}{ll}  
        \left|r_{2, a}^*(x, t, k)\right| \leq \frac{C\left|k-\omega k_0\right|}{1+|k|^2} e^{\frac{t}{4}\left|\operatorname{Re} \Phi_{21}(\zeta, k)\right|}, & k \in \bar{U}_2, \\  
        \left|r_{2, a}^*(x, t, k)-\sum_{j=0}^{N}\frac{(r_2^*)^{(j)}(0)}{j!}k^j\right| \leq  
        C|k|^{N+1} e^{\frac{t}{4}\left|\operatorname{Re} \Phi_{21}(\zeta, k)\right|}, & k \in \bar{U}_2, \\  
        \left|r_{1, a}(x, t, k)-\sum_{j=0}^{N}\frac{r_1^{(j)}(0)}{j!}k^j\right| \leq  
        C|k|^{N+1} e^{\frac{t}{4}\left|\operatorname{Re} \Phi_{21}(\zeta, k)\right|}, & k \in \bar{U}_2, \\  
        \left|r_{1, a}(x, t, k)-\sum_{j=0}^{N}\frac{r_1^{(j)}\left(k_0\right)(k-k_0)^j}{j!}\right|  
        \leq C\left|k-k_0\right|^{N+1} e^{\frac{t}{4}\left|\operatorname{Re} \Phi_{21}(\zeta, k)\right|}, & k \in \bar{U}_2, \\  
        \left|\rho_{1, a}(x, t, k)-\sum_{j=0}^N\frac{\rho^{(j)}_1\left(k_0\right)(k-k_0)^j}{j!}\right|  
        \leq C\left|k-k_0\right|^{N+1} e^{\frac{t}{4}\left|\operatorname{Re} \Phi_{21}(\zeta, k)\right|}, & k \in \bar{U}_4, \\  
        \left|\rho_{1, a}(x, t, k)\right| \leq \frac{C}{1+|k|} e^{\frac{t}{4}\left|\operatorname{Re} \Phi_{21}(\zeta, k)\right|}, & k \in \bar{U}_4.  
    \end{array}  
    \]  

    \item For \(1 \leq p \leq \infty\), the \(L^p\)-norms of \(r_{2,r}^*(x, t; \cdot)\), \(r_{1,r}(x, t; \cdot)\) and \(\rho_{1,r}(x, t; \cdot)\) are order \(\mathcal{O}(t^{-N-\frac{3}{2}})\) on their respective domains.  
    \item By the Schwartz reflection principle, the functions \(r_1^*(k)\), \(r_2(k)\) and \(\rho_1^*(k)\) can be decomposed in the same manner.
\end{enumerate}  
\end{lemma}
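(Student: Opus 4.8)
The plan is to establish Lemma \ref{r decomposition} as a version, uniform in $\zeta\in[0,A]$, of the classical Deift--Zhou analytic-approximation lemma for Schwartz-class reflection coefficients, in the spirit of \cite{DZ1993,Charlier-Lenells-2021,WangJMP}. The first step is to record the regularity input. Since the initial data obey Assumption \ref{assumption} (Schwartz class, no solitons, $|r_j|<1$), the coefficients $r_1$ on $[0,\infty)$ and $r_2$ on $(-\infty,0]$ are smooth and, together with all their derivatives, decay faster than any polynomial at infinity; property (v) of RH problem \ref{RHPm} moreover forces $r_1$ and $r_2^*$ to be smooth up to $k=0$ with full Taylor expansions there, and the bound $1-r_jr_j^*\ge c>0$ makes $\rho_1$ equally smooth with the same decay. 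This yields well-defined Taylor data $r_1^{(j)}(0)$, $r_1^{(j)}(k_0)$, $\rho_1^{(j)}(k_0)$, $(r_2^*)^{(j)}(0)$ for every $j$, which is all that parts (1)--(2) require from the scattering map.

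For each of the three functions I would carry out the splitting near the base points that enter its estimates --- $\{0,k_0\}$ for $r_1$, $\{k_0\}$ for $\rho_1$, $\{0\}$ for $r_2^*$, with the point $\omega k_0$ entering the bound on $r_{2,a}^*$ through the $\mathbb{Z}_3$-rotated position of $U_2$. The construction is the standard one: fix $N$; introduce a smooth cutoff $\chi$ equal to $1$ on a neighbourhood of the relevant part of the real axis and supported slightly beyond it; subtract the order-$N$ Taylor polynomials of the function at its base point(s); multiply the remaining piece by a rational factor $(k+i)^{-m}$ to restore decay at infinity; take the Fourier transform and split it into the half-frequency component that continues analytically, with exponential decay, into $U_2$ (resp.\ $U_4$) --- this defines $r_{\bullet,a}$ --- while the complementary component, together with the far tail $(1-\chi)r_\bullet$, defines the non-analytic remainder $r_{\bullet,r}$. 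Reinstating the rational multiplier and the Taylor polynomial gives $r_\bullet=r_{\bullet,a}+r_{\bullet,r}$, and the two Taylor-matching inequalities listed in (2) then follow automatically from the way $r_{\bullet,a}$ was assembled.

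The quantitative estimates then go as follows. The growth factor $e^{\frac t4|\Re\Phi_{21}(\zeta,k)|}$ is produced by estimating the analytic continuation along rays into $U_2$ (resp.\ $U_4$), where $|e^{ik\xi}|$ over the relevant frequency range is dominated by $e^{\frac t4|\Re\Phi_{21}|}$; the choice of the constant $\tfrac14$ leaves a surplus $\tfrac34$ of decay once $r_{\bullet,a}$ is paired with the exponentials $e^{\pm\vartheta_{21}}$ in the deformed jump matrices of Section \ref{Trans}, and the signature tables of Figure \ref{sign of phi} (combined with the $\mathbb{Z}_3$ symmetry relating $\Phi_{31},\Phi_{32}$ to $\Phi_{21}$) are what allow $\Phi_{21}$ to serve as the single controlling phase for all three continuations. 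The polynomial prefactors $|k-\omega k_0|$, $(1+|k|)^{-1}$, $(1+|k|)^{-2}$ come from the vanishing of the shifted Taylor data and the rational multiplier. The bound $\|r_{\bullet,r}\|_{L^p}=\mathcal{O}(t^{-N-3/2})$, $1\le p\le\infty$, follows from Hausdorff--Young/Plancherel applied to the complementary Fourier component: the order-$(N+1)$ vanishing at the base point together with the smoothness and decay of $r_\bullet$ makes the transform integrable against the needed power, and a $t$-dependent choice of the splitting frequency (equivalently, of the width of $\chi$) turns this into the stated rate. Part (4) is immediate: apply the Schwartz reflection identities $r_1^*(k)=\overline{r_1(\bar k)}$, $r_2(k)=\overline{r_2^*(\bar k)}$, $\rho_1^*(k)=\overline{\rho_1(\bar k)}$ to the decompositions already constructed.

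The step I expect to be the main obstacle is uniformity in $\zeta$ down to $\zeta=0$. In the Painlevé regime $k_0=\zeta/2\to 0$, so the stationary points $0$ and $k_0$ coalesce and the sectors $U_2,U_4$ together with their boundaries (which depend on $k_0$) degenerate; the cutoffs, the Taylor subtractions at $0$ and at $k_0$, and the frequency splitting must be organised so that all constants $C$ and the exponential factor $e^{\frac t4|\Re\Phi_{21}|}$ are genuinely independent of $\zeta\in[0,A]$ throughout this collision. Once this is arranged --- which is the real difference from the fixed-$\zeta$ (dispersive-region) situation treated in \cite{WangJMP} --- the remaining estimates are routine.
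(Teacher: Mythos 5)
Your proposal is the standard Deift--Zhou/Lenells analytic-approximation construction (two-point Taylor subtraction, rational damping, Fourier splitting into an analytically continuable part plus an $\mathcal{O}(t^{-N-3/2})$ remainder), which is exactly what the paper invokes: its proof of this lemma is a one-line citation to \cite{Lenellsmkdv}, where this construction --- including the uniformity as the saddle point $k_0$ approaches the origin that you correctly flag as the delicate step --- is carried out in detail. So the approach matches the paper's; no further comparison is needed.
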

\begin{proof}
    The proof of this lemma is standard. See Ref. \cite{Lenellsmkdv} for details.
\end{proof}
Following the similar procedure as that in Ref. \cite{Wang-APDE}, factorize the jump matrix $v_4$ in the RH problem \ref{RHPm} as follows:
\[
v_4 = v_{4,a}^U v_{4,r} v_{4,a}^L,
\]
where
\[
\begin{aligned}
    & v_{4,a}^{U} = \begin{pmatrix}
    1 & -r_{2,a}^{*}(k)e^{-t\Phi_{21}} & 0 \\
    0 & 1 & 0 \\
    0 & 0 & 1
    \end{pmatrix}, \quad
    v_{4,a}^{L} = \begin{pmatrix}
    1 & 0 & 0 \\
    r_{2,a}(k)e^{t\Phi_{21}} & 1 & 0 \\
    0 & 0 & 1
    \end{pmatrix}, \\
    & v_{4,r} = \begin{pmatrix}
    1 - r_{2,r}(k)r_{2,r}^{*}(k) & -r_{2,r}^{*}(k)e^{-t\Phi_{21}} & 0 \\
    r_{2,r}(k)e^{t\Phi_{21}} & 1 & 0 \\
    0 & 0 & 1
    \end{pmatrix}.
\end{aligned}
\]

In this factorization, \( v_{4,a}^{U} \) represents the analytic continuation on \( U_2 \), while \( v_{4,a}^{L} \) is the analytic continuation on \( U_3 \). The remaining triangular decompositions of jump matrices \( v_2 \) and \( v_6 \) follow from the symmetries in (\ref{symmetry}). 
Define the first transformation of RH problem \ref{RHPm} as follows
\begin{equation}
    m^{(1)}(x,t;k):=m(x,t;k)G(x,t;k),
\end{equation}
where
\begin{equation}\label{G}
    G(x,t;k)=\begin{cases}
        \begin{aligned}
            &v_{2,a}^U,&&k\in D_1,\\
            &(v_{2,a}^L)^{-1},&&k\in D_2,\\
            &v_{4,a}^U,&&k\in D_3,\\
            &(v_{4,a}^L)^{-1},&&k\in D_4,\\
            &v_{6,a}^L,&&k\in D_5,\\
            &(v_{6,a}^U)^{-1},&&k\in D_6.\\
        \end{aligned}
    \end{cases}
\end{equation}

Consequently, the jump matrices for the RH problem of function \( m^{(1)}(x,t;k) \) are:
\[
\begin{aligned}
    \small v_2^{(1)} &= \begin{pmatrix}
    1 & 0 & 0 \\
    0 & 1 - r_{2,r}(\omega k) r_{2,r}^*(\omega k) & -r_{2,r}^*(\omega k)e^{-t\Phi_{32}} \\
    0 & r_{2,r}(\omega k)e^{t\Phi_{32}} & 1
    \end{pmatrix},~~ 
    v_4^{(1)} = \begin{pmatrix}
    1 - |r_{2,r}(k)|^2 & -r_{2,r}^*(k)e^{-t\Phi_{21}} & 0 \\
    r_{2,r}(k)e^{t\Phi_{21}} & 1 & 0 \\
    0 & 0 & 1
    \end{pmatrix}, \\
    v_6^{(1)} &= \begin{pmatrix}
    1 & 0 & r_{2,r}(\omega^2 k)e^{-t\Phi_{31}} \\
    0 & 1 & 0 \\
    -r_{2,r}^*(\omega^2 k)e^{t\Phi_{31}} & 0 & 1 - r_{2,r}(\omega^2 k)r_{2,r}^*(\omega^2 k)
    \end{pmatrix},
\end{aligned}
\]
\[
\begin{aligned}
    v_1^{(1)} &= \small  \begin{pmatrix}
    1 & -r_1(k)e^{-t\Phi_{21}} & \beta(k)e^{-t\Phi_{31}} \\
    r_1^*(k)e^{t\Phi_{21}} & 1 - r_1(k)r_1^*(k) & \alpha(k)e^{-t\Phi_{32}} \\
    0 & 0 & 1
    \end{pmatrix}, ~~
    v_5^{(1)} = \small \begin{pmatrix}
    1 & 0 & 0 \\
    \beta(\omega k)e^{t\Phi_{21}} & 1 & -r_1(\omega k)e^{-t\Phi_{32}} \\
    \alpha(\omega k)e^{t\Phi_{31}} & r_1^*(\omega k)e^{t\Phi_{32}} & 1 - r_1(\omega k)r_1^*(\omega k)
    \end{pmatrix},\\
    v_3^{(1)} &= \begin{pmatrix}
    1 - r_1(\omega^2 k) r_1^*(\omega^2 k) & \alpha(\omega^2 k)e^{-t\Phi_{21}} & r_1^*(\omega^2 k)e^{-t\Phi_{31}} \\
    0 & 1 & 0 \\
    -r_1(\omega^2 k)e^{t\Phi_{31}} & \beta(\omega^2 k)e^{t\Phi_{32}} & 1
    \end{pmatrix},
\end{aligned}
\]
where the functions \( \alpha(k) \equiv \alpha(x,t;k) \) and \( \beta(k) \equiv \beta(x,t;k) \) are defined by
\[
\begin{array}{lll}
    \alpha(k) = -r_{2,a}^*(\omega k)\left(1 - r_1(k) r_1^*(k)\right), & & k \in \mathbb{R}_{+}, \\
    \beta(k) = r_{2,a}(\omega^2 k) + r_1(k) r_{2,a}^*(\omega k), & & k \in \mathbb{R}_{+}.
\end{array}
\]

	\subsection{The second transformation}
	Introduce the function \( \delta_1(k) \) which is analytic except for \( [0, \infty) \) and satisfies the following jump condition:
\[
\delta_{1+}( k) = \delta_{1-}( k) \left( 1 - |r_1(k)|^2 \right), \quad k \in [0, \infty),
\]
with boundary condition
\[
\delta_1(k) = 1 + \mathcal{O}(k^{-1}).
\]
By using the Plemelj formula, the function \( \delta_1(\zeta, k) \) is expressed by
\begin{equation}\label{delta1o}
 \delta_1( k) = \exp \left( \frac{1}{2 \pi i} \int_{0}^{\infty} \frac{\ln(1 - |r_1(s)|^2)}{s - k} \, \mathrm{d}s \right), \quad k \in \mathbb{C} \setminus \mathbb{R}_+.   
\end{equation}

\begin{proposition}\label{deltaproposition}
    The function \( \delta_1(k) \) obeys the following properties:
    \begin{enumerate}
        \item[(i)] \( \delta_1(k) \) can be rewritten as
        \begin{equation}\label{delta1}
         \delta_1(k) = e^{-i\nu \log_0(k)} e^{-\chi_1(k)},
        \end{equation}
         where
        \[
        \nu = -\frac{1}{2 \pi} \ln(1 - |r_1(0)|^2),
        \]
        and
        \[
        \chi_1(k) = \frac{1}{2 \pi i} \int_{0}^{\infty} \log_0(k - s) \mathrm{d}\ln(1 - |r_1(s)|^2).
        \]
        \item[(ii)] \( \delta_1(k) \) and \( \delta_1^{-1}(k) \) are analytic for \( \mathbb{C} \setminus [0, +\infty) \) with continuous boundary values on \( \mathbb{R}_+ \). Moreover,
        \[
        \sup_{k \in \mathbb{C} \setminus [0, +\infty)} \left| \delta_1(k)^{\pm 1} \right| < \infty.
        \]
        \item[(iii)] As \( k \to 0 \) along a path that is non-tangential to \( (0, \infty) \), we have
	\begin{equation}\label{Chi property}
		\left| \chi_1(k) - \chi_1(0) \right| \leq C \left| k \right| \left( 1 + |\ln |k|| \right).
	\end{equation}
    \end{enumerate}
\end{proposition}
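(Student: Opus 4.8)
The plan is to read off all three statements directly from the Cauchy-type representation \eqref{delta1o}, in the order (i)$\to$(ii)$\to$(iii). For part (i) I would integrate by parts in \eqref{delta1o}. Since $\frac{1}{s-k}=\frac{\mathrm d}{\mathrm d s}\log_0(k-s)$ for $k\in\mathbb C\setminus\mathbb R_+$ (the ray $s\mapsto k-s$, $s\ge0$, never meets the cut $\mathbb R_+$, so $\log_0(k-s)$ is smooth along it), and since $r_1\in\mathcal S(\mathbb R)$ forces $\ln(1-|r_1(s)|^2)$ to decay rapidly at $s=+\infty$, the boundary term at $+\infty$ vanishes and the one at $s=0$ equals $-\ln(1-|r_1(0)|^2)\log_0(k)$. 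Dividing by $2\pi i$ and using $-\ln(1-|r_1(0)|^2)=2\pi\nu$ (hence $\frac{-\ln(1-|r_1(0)|^2)}{2\pi i}=-i\nu$), this boundary term becomes $-i\nu\log_0(k)$, while the leftover integral is precisely $-\chi_1(k)$; exponentiating gives \eqref{delta1}.

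For part (ii), analyticity of $\delta_1^{\pm1}$ on $\mathbb C\setminus[0,\infty)$ is immediate from \eqref{delta1o}: under Assumption \ref{assumption} together with $r_1\in\mathcal S(\mathbb R)$, the density satisfies $-M\le\ln(1-|r_1(s)|^2)\le0$ with $M:=-\ln\bigl(1-\sup_{s\ge0}|r_1(s)|^2\bigr)<\infty$, so it is bounded and integrable and the Cauchy transform is holomorphic off $\mathbb R_+$; continuity of the boundary values on $\mathbb R_+$ follows from the Plemelj–Sokhotski formula and the Hölder continuity of the density. For the uniform bound I would compute, for $k=a+ib$,
\[
\operatorname{Re}\frac{1}{2\pi i}\int_0^\infty\frac{\ln(1-|r_1(s)|^2)}{s-k}\,\mathrm ds=\frac{1}{2\pi}\int_0^\infty\frac{b\,\ln(1-|r_1(s)|^2)}{(s-a)^2+b^2}\,\mathrm ds,
\]
and estimate using $\int_{\mathbb R}\frac{|b|}{(s-a)^2+b^2}\,\mathrm ds=\pi$: the right-hand side lies in $[-M/2,M/2]$, hence $|\delta_1(k)^{\pm1}|\le e^{M/2}$.

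For part (iii) I would write
\[
\chi_1(k)-\chi_1(0)=\frac{1}{2\pi i}\int_0^\infty\bigl(\log_0(k-s)-\log_0(-s)\bigr)\,\mathrm d\ln(1-|r_1(s)|^2),
\]
note that $\mathrm d\ln(1-|r_1(s)|^2)$ has a density that is bounded on compact sets and rapidly decaying at $+\infty$, and split the integral at $s=2|k|$. On $0<s<2|k|$ one bounds the integrand by $C\bigl(1+|\ln|k||+|\ln s|\bigr)$, whose integral is $\mathcal O\bigl(|k|(1+|\ln|k||)\bigr)$; on $s>2|k|$ one has $|k|/s<1/2$, so $|\log_0(k-s)-\log_0(-s)|=|\log_0(1-k/s)|\le C|k|/s$, and integrating $C|k|/s$ against the rapidly decaying density again gives $\mathcal O\bigl(|k|(1+|\ln|k||)\bigr)$. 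Adding the two contributions yields \eqref{Chi property}, uniformly as $k\to0$ non-tangentially to $(0,\infty)$.

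The main obstacle is bookkeeping with the branch of $\log_0$: one must verify that the contour $s\mapsto k-s$ stays off the cut $\mathbb R_+$ throughout the integration by parts in (i), that the constant of integration arising there is exactly the term absorbed into $\nu$, and that the non-tangential approach $k\to0$ in (iii) keeps $\arg(k-s)$ and $\arg(-s)$ controlled near $s=0$. Apart from this the estimates in (ii)–(iii) are routine and parallel the standard treatment of such $\delta$-functions; see \cite{Lenellsmkdv,Wang-APDE}.
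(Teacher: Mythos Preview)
Your proposal is correct and follows exactly the standard route the paper is alluding to: the paper's own proof consists only of the sentence ``The proof of this proposition follows from standard analysis based on the expression (\ref{delta1o}),'' and your integration-by-parts derivation of (i), Poisson-kernel bound for (ii), and near/far splitting at $s=2|k|$ for (iii) are precisely that standard analysis spelled out in detail.
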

\begin{proof}
   The proof of this proposition follows from standard analysis based on the expression (\ref{delta1o}).
\end{proof}

Similarly, define \( \delta_3(k) := \delta_1(\omega^2 k) \) and \( \delta_5(k) := \delta_1(\omega k) \). More explicitly, they are given by  
\begin{equation}\label{delta35}  
	\begin{aligned}  
		\delta_3(k) &= e^{-i\nu \log_0(\omega^2 k)} e^{-\chi_1(\omega^2 k)}, && k \in \mathbb{C} \setminus \omega \mathbb{R}_+, \\  
		\delta_5(k) &= e^{-i\nu \log_0(\omega k)} e^{-\chi_1(\omega k)}, && k \in \mathbb{C} \setminus \omega^2 \mathbb{R}_+.  
	\end{aligned}  
\end{equation}

Introduce the matrix-valued function $\Delta(k)$ as
	\begin{equation}\label{Deltadefination}
	   \Delta( k)=\left(\begin{array}{ccc}
		\frac{\delta_1( k)}{\delta_3( k)} & 0 & 0 \\
		0 & \frac{\delta_5( k)}{\delta_1( k)} & 0 \\
		0 & 0 & \frac{\delta_3( k)}{\delta_5( k)}
	\end{array}\right). 
	\end{equation}

Note that the function \( \Delta(k) \) indicates that there exists a triangular decomposition such that  
	$$	\Delta_{-}^{-1}v_1^{(1)}\Delta_+=\left(\begin{array}{ccc}
		1-\frac{\delta_{1+}^2}{\delta_{1-}^2} \rho_1(k) \rho_1^*(k) & -\frac{\delta_3 \delta_5}{\delta_{1-}^2} \rho_1(k) e^{-t \Phi_{21}} & \frac{\delta_3^2}{\delta_{1-} \delta_5} \beta(k) e^{-t \Phi_{31}} \\
		\frac{\delta_{1+}^2}{\delta_3 \delta_5} \rho_1^*(k) e^{t \Phi_{21}} & 1 & -r_2^*(\omega k) \frac{\delta_{1+} \delta_3}{\delta_5^2} e^{-t \Phi_{32}} \\
		0 & 0 & 1
	\end{array}\right)=v_{1,L}^{(1) } v_{1, r}^{(1)} v_{1,U}^{(1) },
	$$
	where
	$$
    \small
	\begin{aligned}
	    v_{1,L}^{(1) }&= \begin{pmatrix}
	        1 & -\frac{\delta_3 \delta_5}{\delta_{1-}^2} \rho_{1, a}(k) e^{-t \Phi_{21}} & \frac{\delta_3^2}{\delta_{1-} \delta_5} r_{2, a}\left(\omega^2 k\right) e^{-t \Phi_{31}} \\
			0 & 1 & 0 \\
			0 & 0 & 1
	    \end{pmatrix}, \\      v_{1,U}^{(1)}&= 
		\begin{pmatrix}
		    1 & 0 & 0 \\
		\frac{\delta_{1+}^2}{\delta_3 \delta_5} \rho_{1, a}^*(k) e^{t \Phi_{21}} & 1 & -\frac{\delta_{1+} \delta_3}{\delta_5^2} r_{2, a}^*(\omega k) e^{-t \Phi_{32}} \\
		0 & 0 & 1
		\end{pmatrix},\ 
		v_{1, r}^{(1)}= \begin{pmatrix}
		    1-\frac{\delta_{1+}^2}{\delta_{1-}^2} \rho_{1, r}^*(k) \rho_{1, r}(k) & -\frac{\delta_3 \delta_5}{\delta_{1-}^2} \rho_{1, r}(k) e^{-t \Phi_{21}} & 0 \\
			\frac{\delta_{1+}^2}{\delta_3 \delta_5} \rho_{1, r}^*(k) e^{t \Phi_{21}} & 1 & 0 \\
			0 & 0 & 1
		\end{pmatrix}.
	\end{aligned}
	$$ 
    
	Similarly, the triangular decompositions of matrices \( v_{3}^{(1)} \) and \( v_{5}^{(1)} \) can be derived by applying the symmetries in (\ref{symmetry}). Additionally, define \( H(x, t; k) \) in the domain \( D_1 \cup D_6 \) as follows  
\begin{equation}\label{H}
H(x, t; k) = 
\begin{cases}
    \left(v_{1,U}^{(1)}\right)^{-1}, & k \in V_1, \\ 
    v_{1,L}^{(1)}, & k \in V_6, \\ 
    I, & \text{elsewhere in } D_1 \cup D_6,
\end{cases}
\end{equation}
where \( V_1, V_6\) and \(\Sigma^{(2)} \) are depicted in Figure \ref{Sigma2}. Moreover, \( H(x,t;k) \) can be extended to \( \mathbb{C} \setminus \Sigma^{(2)} \) by  
\[
H(x,t;k) = \mathcal{A} H(x,t;\omega k) \mathcal{A}^{-1}.
\]  

Thus, letting \( m^{(2)}(x,t;k) = m^{(1)}(x,t;k)\Delta(k)H(k) \), the jump contour  \( \Sigma^{(2)} \) for the RH problem of function $m^{(2)}(x,t;k)$ is depicted in Figure \ref{Sigma2}, and the corresponding jump matrices on \( \Sigma^{(2)} \) are given by
 \begin{equation} \label{v2_matrices}
\small
\begin{aligned}
v_1^{(2)} &= \begin{pmatrix}
1 & 0 & 0 \\
\frac{\delta_1^2}{\delta_3 \delta_5} \rho_{1,a}^*(k) e^{t \Phi_{21}} & 1 & -\frac{\delta_1 \delta_3}{\delta_5^2} r_{2,a}^*(\omega k) e^{-t \Phi_{32}} \\
0 & 0 & 1
\end{pmatrix}, \quad
v_7^{(2)} = \begin{pmatrix}
1-\frac{\delta_{1+}^2}{\delta_{1-}^2} \rho_{1,r}(k) \rho_{1,r}^*(k) & -\frac{\delta_3 \delta_5}{\delta_{1-}^2} \rho_{1,r}(k) e^{-t \Phi_{21}} & 0 \\
\frac{\delta_{1+}^2}{\delta_3 \delta_5} \rho_{1,r}^*(k) e^{t \Phi_{21}} & 1 & 0 \\
0 & 0 & 1
\end{pmatrix}, \\
v_6^{(2)} &= \begin{pmatrix}
1 & -\frac{\delta_3 \delta_5}{\delta_1^2} \rho_{1,a}(k) e^{-t \Phi_{21}} & \frac{\delta_3^2}{\delta_1 \delta_5} r_{2,a}\left(\omega^2 k\right) e^{-t \Phi_{31}} \\
0 & 1 & 0 \\
0 & 0 & 1
\end{pmatrix}, \quad
v_8^{(2)} = \begin{pmatrix}
1 & 0 & 0 \\
0 & 1 - r_{2,r}(\omega k) r_{2,r}^*(\omega k) & -\frac{\delta_1 \delta_3}{\delta_5^2} r_{2,r}^*(\omega k) e^{-t \Phi_{32}} \\
0 & \frac{\delta_5^2}{\delta_1 \delta_3} r_{2,r}(\omega k) e^{t \Phi_{32}} & 1
\end{pmatrix}, \\
v_{13}^{(2)} &= \begin{pmatrix}
1-\frac{\delta_{1+}^2}{\delta_{1-}^2} \rho_1(k) \rho_1^*(k) & -\frac{\delta_3 \delta_5}{\delta_{1-}^2} \rho_1(k) e^{-t \Phi_{21}} & \frac{\delta_3^2}{\delta_{1-} \delta_5} \beta(k) e^{-t \Phi_{31}} \\
\frac{\delta_{1+}^2}{\delta_3 \delta_5} \rho_1^*(k) e^{t \Phi_{21}} & 1 & -r_2^*(\omega k) \frac{\delta_{1+} \delta_3}{\delta_5^2} e^{-t \Phi_{32}} \\
0 & 0 & 1
\end{pmatrix},
\end{aligned}
\end{equation}
and other jump matrices can be derived by the symmetries in (\ref{symmetry}).

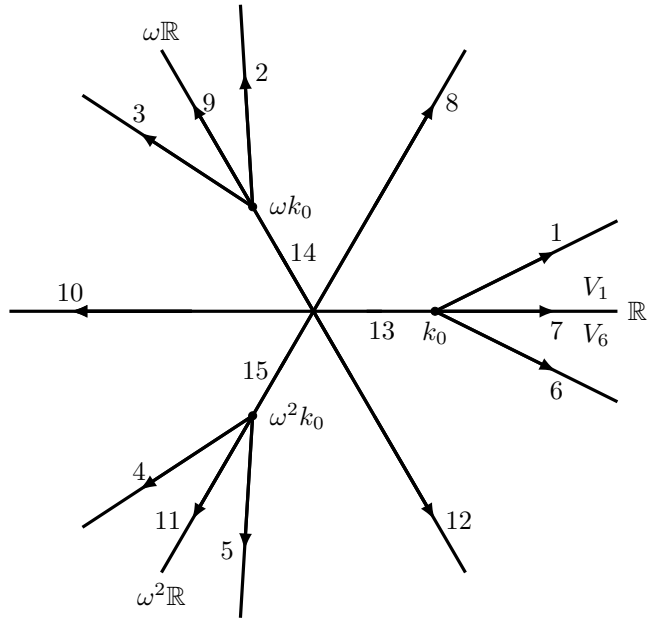
\begin{figure}[h]
		\centering
		\begin{tikzpicture}[>=latex]
			\draw[very thick] (-4,0) to (4,0) node[black,right]{$\mathbb{R}$};
			\draw[very thick] (2,1.732*2) to (-2,-1.732*2)  node[black,below]{$\omega^2\mathbb{R}$};
			\draw[very thick] (2,-1.732*2) to (-2,1.732*2)    node[black,above]{$\omega\mathbb{R}$};
			\filldraw[black] (1.6,0) node[black,below=0.1mm]{$k_{0}$} circle (1.5pt);
			\filldraw[black] (-.8,-1.732*0.8) node[black,right=1mm]{$\omega^{2}k_{0}$} circle (1.5pt);
			\filldraw[black] (-.8,1.732*0.8) node[black,right=1mm]{$\omega k_{0}$} circle (1.5pt);
			
			\draw[->,very thick,rotate=60,black] (-1.6,0)  to (-3.2,0.8) node[above,black] {$\small 4$} ;
			\draw[-,very thick,rotate=60,black] (-1.6,0)  to (-4,1.2);
			\draw[->,very thick,rotate=60,black] (-1.6,0)  to (-3.2,-0.8) node[left,black] {$\small 5$};
			\draw[-,very thick,rotate=60,black] (-1.6,0)  to (-4,-1.2);

			\draw[->,very thick,rotate=120,black] (1.6,0)  to (3.2,0.8) node[above,black] {$\small 3$} ;
			\draw[-,very thick,rotate=120,black] (1.6,0)  to (4,1.2);
			\draw[->,very thick,rotate=120,black] (1.6,0)  to (3.2,-0.8) node[right,black] {$\small 2$};
			\draw[-,very thick,rotate=120,black] (1.6,0)  to (4,-1.2);							
			\draw[->,very thick,black] (1.6,0)  to (3.2,0.8) node[above,black] {$\small 1$}
            node[below right=0.2 cm,black] {$\small V_1$}
            ;
			\draw[-,very thick,black] (1.6,0)  to (4,1.2);
			\draw[->,very thick,black] (1.6,0)  to (3.2,-0.8) node[below,black] {$\small 6$}
            node[above right=0.2 cm,black] {$\small V_6$};
			\draw[-,very thick,black] (1.6,0)  to (4,-1.2);
			\draw[->,very thick,black] (1.6,0)  to (3.2,0)node[below,black] {$\small 7$};
			\draw[->,very thick,rotate=60,black] (1.6,0)  to (3.2,0)node[right,black] {$\small 8$};
			\draw[-,very thick] (0.7,0)  to (0.9,0)node[below,black] { $\small 13$};
			\draw[-,very thick,rotate=120] (0.7,0)  to (0.9,0)node[right,black] { $\small 14$};
			\draw[-,very thick,rotate=-120] (0.7,0)  to (0.9,0)node[left,black] { $\small 15$};
			\draw[->,very thick,black] (-1.6,0)  to (-3.2,0);

			\draw[->,very thick,rotate=120,black] (1.6,0)  to (3.2,0)node[right,black] {$\small 9$};
			\draw[->,very thick,rotate=180,black] (1.6,0)  to (3.2,0)node[above,black] {$\small 10$};
			\draw[->,very thick,rotate=-120,black] (1.6,0)  to (3.2,0)node[left,black] {$\small 11$};
			\draw[->,very thick,rotate=-60,black] (1.6,0)  to (3.2,0)node[right,black] {$\small 12$};
			\draw[-,very thick] (-1.5,-1.732*1.5)  to (1.5,1.732*1.5) ;
			\draw[-,very thick] (-.5,1.732*.5)  to (.5,-1.732*.5) ;
			\draw[-,very thick] (-1.5,1.732*1.5)  to (1.5,-1.732*1.5);
			
		\end{tikzpicture}
		\caption{{\protect\small
				The jump contour $\Sigma^{(2)}$ and saddle points $\pm\omega^jk_0$ for $j=0,1,2$.}}
		\label{Sigma2}
	\end{figure}

\section{Long-time asymptotics in the Painlev\'e region}\label{LOcalmodel}

This section considers the long-time asymptotics of the modified Boussinesq equation (\ref{mbequation}) in Painlev\'e region, i.e., the region with $\frac{|x|}{\sqrt{t}}<c_1$ for $c_1>0$. For simplicity, we only restrict ourself on \( \frac{x}{\sqrt{t}} < c_1 \) with \( x \geq 0 \) and the case with $x<0$ can also be studied in the similar way. In this case, the critical point behaves \( k_0 \sim t^{-\frac{1}{2}} \) as \( t \to \infty \), so the local parametrix near the critical points can be transformed into a local model near the original point $k=0$. Before factorizing the RH problem for \( m^{(2)}(x,t;k) \) into a model problem near \(k= 0 \), it is essential to examine the behavior of the elements of the jump matrices as \( k \to 0 \).
Indeed, using the representations of functions \( \delta_{j}(k) \) for \( j = 1, 3, 5 \), in (\ref{delta1}) and (\ref{delta35}), the following results are obtained.

\begin{proposition}\label{deltaproposition1}
    Restrict the spectral parameter \( k \) to the contours \( \Sigma_{1,6,8,13}^{(2)} \) in Figure \ref{Sigma2}, the entries of the jump matrices involving \( \delta_{j}(k) ~(j = 1, 3, 5)\) can be rewritten as follows:
    \begin{enumerate}
        \item[$(\rm i)$] On \( \Sigma_1^{(2)} \),
        $
        \frac{\delta_1^2}{\delta_3 \delta_5} = e^{-2\pi \nu} e^{-(2\chi_1(k) - \chi_1(\omega k) - \chi_1(\omega^2 k))}, \quad
        \frac{\delta_1 \delta_3}{\delta_5^2} = e^{-(\chi_1(k) + \chi_1(\omega^2 k) - 2\chi_1(\omega k))}.
        $
        \item[$(\rm ii)$] On \( \Sigma_6^{(2)} \),
        $
        \frac{\delta_3 \delta_5}{\delta_1^2} = e^{-2\pi \nu} e^{(2\chi_1(k) - \chi_1(\omega k) - \chi_1(\omega^2 k))}, \quad
        \frac{\delta_3^2}{\delta_1 \delta_5} = e^{-( -\chi_1(k) + 2\chi_1(\omega^2 k) - \chi_1(\omega k))}.
        $
        \item[$(\rm iii)$] On \( \Sigma_8^{(2)} \),
        $
        \frac{\delta_1 \delta_3}{\delta_5^2} = e^{-(\chi_1(k) + \chi_1(\omega^2 k) - 2\chi_1(\omega k))}.
       $
        \item[$(\rm iv)$] On \( \Sigma_{13}^{(2)} \), the following identities hold:
        \[
        \begin{cases}
        \begin{aligned}
        &\frac{\delta_{1+}^2}{\delta_{1-}^2} = e^{-4\pi \nu} e^{-(2\chi_{1+}(k) - 2\chi_{1-}(k))}, \\
        &\frac{\delta_3 \delta_5}{\delta_{1-}^2} = e^{-2\pi \nu} e^{(2\chi_{1-}(k) - \chi_1(\omega k) - \chi_1(\omega^2 k))}, \quad
        \frac{\delta_{1+}^2}{\delta_3 \delta_5} = e^{-2\pi \nu} e^{-(2\chi_{1+}(k) - \chi_1(\omega k) - \chi_1(\omega^2 k))}, \\
        &\frac{\delta_{3}^2}{\delta_{1-} \delta_5} = e^{-(2\chi_1(\omega^2 k) - \chi_{1-}(k) - \chi_1(\omega k))}, \quad
        \frac{\delta_{1+} \delta_3}{\delta_{5}^2} = e^{-(\chi_1(\omega^2 k) + \chi_{1+}(k) - 2\chi_1(\omega k))}.
        \end{aligned}
        \end{cases}
        \]
    \end{enumerate}
\end{proposition}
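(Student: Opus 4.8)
The plan is to substitute the explicit representations $(\ref{delta1})$ and $(\ref{delta35})$ into each product $\delta_1(k)^{a}\delta_3(k)^{b}\delta_5(k)^{c}$ occurring in the jump matrices $(\ref{v2_matrices})$ and to peel apart the contribution of the power factor $e^{-i\nu\log_0(\cdot)}$ from that of the Cauchy-type factor $e^{-\chi_1(\cdot)}$. Since $\delta_1(k)=e^{-i\nu\log_0(k)}e^{-\chi_1(k)}$, $\delta_3(k)=e^{-i\nu\log_0(\omega^2 k)}e^{-\chi_1(\omega^2 k)}$ and $\delta_5(k)=e^{-i\nu\log_0(\omega k)}e^{-\chi_1(\omega k)}$, every ratio in sight factors as
\[
\delta_1(k)^{a}\delta_3(k)^{b}\delta_5(k)^{c}=\exp\!\Big(-i\nu\big[a\log_0(k)+b\log_0(\omega^2 k)+c\log_0(\omega k)\big]\Big)\exp\!\Big(-\big[a\chi_1(k)+b\chi_1(\omega^2 k)+c\chi_1(\omega k)\big]\Big),
\]
with $a+b+c=0$ in all relevant cases. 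Because $|\omega k|=|\omega^2 k|=|k|$, the $\ln|k|$ part of the first bracket cancels, the $\chi_1$-bracket is already exactly the combination asserted in $(\rm i)$--$(\rm iv)$, and hence the whole proposition reduces to evaluating the purely imaginary number $a\arg(k)+b\arg(\omega^2 k)+c\arg(\omega k)$, with $\arg$ normalized to $[0,2\pi)$ as in the definition of $\log_0$.

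I would then carry out this evaluation contour by contour, tracking the branch cut of $\log_0$ along $\mathbb{R}_+$. On $\Sigma_1^{(2)}$ (the ray bounding $V_1$ in the upper half-plane) one has $\arg(k)\to 0^{+}$, $\arg(\omega k)\to 2\pi/3$, $\arg(\omega^2 k)\to 4\pi/3$, all lying in $[0,2\pi)$. On $\Sigma_6^{(2)}$ (the ray bounding $V_6$ in the lower half-plane) one has $\arg(k)\to 2\pi^{-}$, so $\arg(\omega k)$ and $\arg(\omega^2 k)$ must be reduced modulo $2\pi$ to $2\pi/3$ and $4\pi/3$, and it is exactly this reduction that pins down the sign of the exponent and hence the prefactor $e^{-2\pi\nu}$ in $v_6^{(2)}$. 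On $\Sigma_8^{(2)}$ the triple $k,\omega k,\omega^2 k$ sits at arguments $\pi/3,\pi,5\pi/3$, none of which is on $\mathbb{R}_+$; the argument combination vanishes and no exponential prefactor survives. On $\Sigma_{13}^{(2)}\subset(0,k_0)$ the function $\delta_1$ carries nontangential boundary values with $\log_{0,+}(k)=\ln|k|$ and $\log_{0,-}(k)=\ln|k|+2\pi i$, while $\omega k$ and $\omega^2 k$ stay strictly off $\mathbb{R}_+$ with arguments $2\pi/3$ and $4\pi/3$; substituting these into the five ratios $\delta_{1+}^2/\delta_{1-}^2$, $\delta_3\delta_5/\delta_{1-}^2$, $\delta_{1+}^2/(\delta_3\delta_5)$, $\delta_3^2/(\delta_{1-}\delta_5)$ and $\delta_{1+}\delta_3/\delta_5^2$ and collecting the resulting multiples of $2\pi i$ yields the prefactors $e^{-4\pi\nu}$, $e^{-2\pi\nu}$, $e^{-2\pi\nu}$, $1$ and $1$.

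Assembling the pieces, in every case the argument combination equals $0$ or $\pm 2\pi i$ (or $\pm 4\pi i$ for the squared ratio), so $e^{-i\nu(\cdots)}$ equals $1$, $e^{\pm 2\pi\nu}$ or $e^{-4\pi\nu}$, matching the prefactors in $(\rm i)$--$(\rm iv)$, while the $\chi_1$-exponentials pass through verbatim. The only genuine subtlety — and the step I expect to need the most care — is precisely this branch-cut bookkeeping: deciding which of $\arg(\omega k),\arg(\omega^2 k)$ has wrapped past $2\pi$ on each oriented ray, and correctly identifying the $\delta_{1\pm}$ boundary values on $\Sigma_{13}^{(2)}$ (together with the corresponding boundary values of $\chi_1$ built into $\chi_{1\pm}$). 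Once those are fixed, all four families of identities follow by direct substitution from $(\ref{delta1})$, $(\ref{delta35})$ and the properties of $\delta_1$ in Proposition $\ref{deltaproposition}$.
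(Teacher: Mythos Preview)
Your approach is correct and essentially identical to the paper's: substitute the explicit forms of $\delta_1,\delta_3,\delta_5$ from $(\ref{delta1})$ and $(\ref{delta35})$, note that the $\ln|k|$ contributions cancel because $a+b+c=0$, and then do the branch-cut bookkeeping for $\arg(k),\arg(\omega k),\arg(\omega^2 k)\in[0,2\pi)$ on each contour to pin down the $e^{-2\pi\nu}$-type prefactors. The paper only writes out the computation for $\delta_1^2/(\delta_3\delta_5)$ on $\Sigma_1^{(2)}$ and declares the rest similar, whereas you have spelled out the argument tracking on each of $\Sigma_1^{(2)},\Sigma_6^{(2)},\Sigma_8^{(2)},\Sigma_{13}^{(2)}$ in somewhat more detail.
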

\begin{proof}
    The proposition can be proved directly from the formulas in (\ref{delta1}) and (\ref{delta35}). However, it should be noted that the argument of \( \log_0(k) \) lies within the interval \( [0, 2\pi) \). Specifically, one has
\[
\frac{\delta_1^2}{\delta_3 \delta_5} = e^{i\nu \left(-2\log_0(k) + \log_0(\omega^2 k) + \log_0(\omega k)\right)} e^{-(2\chi_1(k) - \chi_1(\omega k) - \chi_1(\omega^2 k))}.
\]
This implies that for \( \arg(k) \in (0, \frac{\pi}{6}] \), it follows that
$
e^{i\nu \left(-2\log_0(k) + \log_0(\omega^2 k) + \log_0(\omega k)\right)} = e^{-2\pi \nu}.
$
The other properties follow in a similar manner and are omitted for brevity.
\end{proof}
	On the other hand, by the estimates in Lemma \ref{r decomposition}, the jump matrices restricted on contours \( \Sigma^{(2)}_j~(j = 7, \cdots, 12) \) behave like \( I + \mathcal{O}\left(t^{-N - \frac{3}{2}}\right) \) for \( N \geq 0 \). Other jump matrices tend to \( I \) with exponential decay, except for those on the jump contours near the critical points and \( \Sigma^{(2)}_{13, 14, 15} \). Notice that the critical point satisfies \( k_0 \sim t^{-\frac{1}{2}} \) as \( t \to \infty \), and it is natural to assume that \( k_0 \ll 1 \) for sufficiently large \( t \). Define \( D_{\epsilon}(0) := \{ k \in \mathbb{C} : |k| < \epsilon \} \), and let \( \omega^j k_0 \in D_{\epsilon}(0) \) for \( j = 0, 1, 2 \). Moreover, in order to further factorize the RH problem, take the self-similar transformation
\begin{equation}\label{palnetrans}
    \lambda = -\frac{2\sqrt{t}}{3\sqrt{3}} k, \quad y = -\frac{\sqrt{3}x}{2\sqrt{t}}.
\end{equation}
\par
Consequently, rewrite the nonlinear dispersive relation (phase functions) \( \vartheta_{ij}(x,t;k)~(1\le j<i\le3) \) as
\begin{equation}
	\vartheta_{ij}(y;\lambda) = 3\left( (\omega^i - \omega^j) y \lambda + \frac{9 \lambda^2}{4} (\omega^{2i} - \omega^{2j}) \right).
\end{equation}

After transformation, the original small disk \( D_{\epsilon}(0) \) is mapped to \( \{ \lambda \in \mathbb{C} : |\lambda| < \frac{2\sqrt{t}}{3\sqrt{3}} \epsilon \} \). Furthermore, as \( t \to \infty \), it is observed that \( \rho_{j,a}(k) \to \rho_j(0) \) and \( r_{j,a}(k) \to r(0) \) for \( j = 1, 2 \). Hence, introduce the local model problem \( m^{p}(y;\lambda) \) which satisfies the properties outlined in the Appendix \ref{Appendix}.

	\begin{lemma}\label{v2-vp}
		For some positive constant $C$, suppose that $\frac{x}{\sqrt t}\le C$ with $x\ge0$, then the jump matrices for the RH problem of funciton $m^{(2)}(x,t;k)$ satisfy the following estimate
		\begin{equation}
			\|v^{(2)}-v^p\|_{L^1(\Sigma^{(2)}\cap D_{\epsilon}(0))}=\mathcal{O}\left(t^{-1}\right),\
            \|v^{(2)}-v^p\|_{L^{\infty}(\Sigma^{(2)}\cap D_{\epsilon}(0))}=\mathcal{O}\left(t^{-\frac{1}{2}}\right).
		\end{equation}
        
		Moreover, on $\partial D_{\epsilon}(0)$, the function $m^p(x,t;k)$ obeys that
		\begin{equation}\label{invmpestimate}
			\|(m^p)^{-1}-I\|_{L^1\cap L^{\infty}(\partial D_{\epsilon}(0))} =\mathcal{O}\left(t^{-\frac{1}{2}}\right),
		\end{equation}
		and
		\begin{equation}
			\frac{1}{2\pi i }\int_{\partial D_{\epsilon}(0)}\left((m^p)^{-1}-I\right)\mathrm{d} k=\frac{3\sqrt{3}m_1^p(y)}{2\sqrt{t}}+\mathcal{O}\left(t^{-1}\right).
		\end{equation}
	\end{lemma}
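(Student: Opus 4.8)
The plan is to establish the three assertions in turn by the standard local-parametrix comparison of the Deift--Zhou method, the essential inputs being the self-similar rescaling \eqref{palnetrans}, the decomposition estimates of Lemma \ref{r decomposition}, the locally constant phases of Proposition \ref{deltaproposition1}, and the modulus-of-continuity bound \eqref{Chi property} for $\chi_1$.

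\emph{Step 1: the jump estimate.} By construction the jump contour of the model problem $m^p(y;\lambda)$ is the image of $\Sigma^{(2)}\cap D_\epsilon(0)$ under $\lambda=-\frac{2\sqrt t}{3\sqrt3}k$, and its jump $v^p$ is the ``frozen-coefficient'' limit of $v^{(2)}$ near $k=0$: each analytic reflection-type coefficient $r_{1,a},\rho_{1,a},r_{2,a}$ (and its $\omega$-rotated copies) appearing in \eqref{v2_matrices} is replaced by its value at $k=0$, while each $\delta$-quotient listed in Proposition \ref{deltaproposition1} is replaced by its $k\to0$ limit, namely the pure phase $e^{-2\pi\nu}$ (or $1$) coming from the $\log_0$ contribution, with every $\chi_1(\omega^l k)$ replaced by $\chi_1(0)$. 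On $\Sigma^{(2)}\cap D_\epsilon(0)$ the coefficient-freezing error is $\mathcal O(|k|)$ by Lemma \ref{r decomposition}(2) (the growing factor $e^{\frac t4|\mathrm{Re}\,\Phi_{21}|}$ there being absorbed by the accompanying $e^{\mp t\Phi_{ij}}$), and the $\delta$-freezing error is $\mathcal O\big(|k|(1+|\ln|k||)\big)$ by \eqref{Chi property}; since $k_0\sim t^{-1/2}$ and $|k|=\frac{3\sqrt3}{2\sqrt t}|\lambda|$ after rescaling, both are $\mathcal O(t^{-1/2})$ uniformly on the relevant arcs (the logarithm being harmless), which gives the $L^\infty$ bound. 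For the $L^1$ bound, change variables to $\lambda$, where $|dk|=\frac{3\sqrt3}{2\sqrt t}|d\lambda|$; along the rescaled steepest-descent rays the oscillatory factors become $e^{\pm\vartheta_{ij}(y;\lambda)}$ with Gaussian decay $e^{-c|\lambda|^2}$ for large $|\lambda|$ (and a bounded saddle since $|x|/\sqrt t<C$), so the $\lambda$-integral of the difference — pointwise $\mathcal O(t^{-1/2})$ against an integrable Gaussian — is $\mathcal O(t^{-1/2})$, whence $\|v^{(2)}-v^p\|_{L^1(\Sigma^{(2)}\cap D_\epsilon(0))}=\mathcal O(t^{-1})$. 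One also checks that the residual jumps on $\Sigma^{(2)}_{7,\dots,12}$, which are $I+\mathcal O(t^{-N-3/2})$ by Lemma \ref{r decomposition}(3), and the remaining jumps inside $D_\epsilon(0)$, which are exponentially close to $I$ away from the saddles, do not affect these orders.

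\emph{Step 2: the boundary estimate for $m^p$.} On $\partial D_\epsilon(0)$ one has $|\lambda|=\frac{2\sqrt t}{3\sqrt3}\epsilon\to\infty$, so $m^p$ may be replaced there by its large-$\lambda$ expansion $m^p(y;\lambda)=I+\frac{m_1^p(y)}{\lambda}+\mathcal O(\lambda^{-2})$ from Appendix \ref{Appendix}. Since $\frac1\lambda=-\frac{3\sqrt3}{2\sqrt t\,k}$ and $|k|=\epsilon$ on $\partial D_\epsilon(0)$, this yields $m^p-I=\mathcal O(t^{-1/2})$, hence $(m^p)^{-1}-I=-(m^p-I)+\mathcal O(t^{-1})=\mathcal O(t^{-1/2})$ uniformly on $\partial D_\epsilon(0)$; since $\partial D_\epsilon(0)$ has fixed finite length, \eqref{invmpestimate} follows in both $L^1$ and $L^\infty$.

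\emph{Step 3: the residue.} From $(m^p)^{-1}=I-\frac{m_1^p(y)}{\lambda}+\mathcal O(\lambda^{-2})$ on $\partial D_\epsilon(0)$, the $\mathcal O(\lambda^{-2})$ remainder integrates to $\mathcal O(t^{-1})$ (the circle having finite length and $|\lambda|^{-2}=\mathcal O(t^{-1})$ there), so
\[
\frac1{2\pi i}\int_{\partial D_\epsilon(0)}\big((m^p)^{-1}-I\big)\,dk
=-\frac1{2\pi i}\int_{\partial D_\epsilon(0)}\frac{m_1^p(y)}{\lambda}\,dk+\mathcal O(t^{-1})
=\frac{3\sqrt3\,m_1^p(y)}{2\sqrt t}\,\frac1{2\pi i}\int_{\partial D_\epsilon(0)}\frac{dk}{k}+\mathcal O(t^{-1}),
\]
and the residue theorem evaluates the last integral to $1$, giving $\frac{3\sqrt3\,m_1^p(y)}{2\sqrt t}+\mathcal O(t^{-1})$.

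The technical heart — and the step I expect to be the main obstacle — is Step 1: one must verify entry by entry, for all the matrices in \eqref{v2_matrices} and their $\mathbb Z_3$-images, that the rescaled $v^{(2)}$ converges to $v^p$ at the stated rate, simultaneously using the pointwise control of the analytic pieces in Lemma \ref{r decomposition}(2), the fact (Proposition \ref{deltaproposition1}) that the $\log_0$-contributions are locally constant in the sector containing each arc, and the Hölder bound \eqref{Chi property} for $\chi_1$, all uniformly in $y$ over $|x|/\sqrt t<C$, and to confirm that the Gaussian decay of the rescaled phases along the steepest-descent rays makes the $\lambda$-integral converge so that only the Jacobian factor $t^{-1/2}$ survives in the $L^1$ estimate.
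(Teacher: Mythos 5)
Your proposal follows essentially the same route as the paper's proof: an entrywise comparison of $v^{(2)}$ with the frozen-coefficient jump $v^p$ using Lemma \ref{r decomposition}, Propositions \ref{deltaproposition} and \ref{deltaproposition1} together with the bound \eqref{Chi property} to get the pointwise estimate $C|k|e^{-\frac{3t}{4}|\Re\Phi_{21}|}$, followed by the large-$\lambda$ expansion of $m^p$ on $\partial D_\epsilon(0)$ and the residue theorem. The one place to tighten the wording is the $L^\infty$ step: $|k|$ is \emph{not} $\mathcal O(t^{-1/2})$ uniformly on $\Sigma^{(2)}\cap D_\epsilon(0)$ (it ranges up to $\epsilon$); the $t^{-1/2}$ rate comes from $\sup_{v\ge k_0} v\,e^{-ctv^2}=\mathcal O(t^{-1/2})$, i.e.\ from the Gaussian factor you already carry, exactly as in the paper's computation.
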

	\begin{proof}
		For $k \in \Sigma^{(2)}_1 \cap D_\epsilon$, where $D_\epsilon := D_\epsilon(0)$, only the entries $(2,1)$ and $(2,3)$ of $v^{(2)}_1 - v^p_1$ are nonzero. More precisely, the estimate for the $(2,1)$ entry is given as follows:
\begin{equation*}
\begin{aligned}
    |[v^{(2)}_1 - v^p_1]_{21}| &= \left| \left( \frac{\delta_1^2}{\delta_3 \delta_5} \rho_{1,a}^*(k) - e^{-2\pi \nu} \rho_1^*(0) \right) e^{t\Phi_{21}} \right| \\
    &\le \left| \frac{\delta_1^2}{\delta_3 \delta_5} \left( \rho_{1,a}^*(k) - \rho_1^*(0) \right) e^{t\Phi_{21}} \right| 
    + \left| \left( \frac{\delta_1^2}{\delta_3 \delta_5} - e^{-2\pi \nu} \right) \rho_1^*(0) e^{t\Phi_{21}} \right|.
\end{aligned}
\end{equation*}

By Propositions \ref{deltaproposition} and \ref{deltaproposition1}, it follows that 
$
\left| \frac{\delta_1^2}{\delta_3 \delta_5} - e^{-2\pi \nu} \right| \leq C |k| (1 + |\ln |k||),
$
and by Lemma \ref{r decomposition}, we obtain
\begin{equation*}
\begin{aligned}
    |[v^{(2)}_1 - v^p_1]_{21}| &\leq C |k| e^{-\frac{3t}{4} |\operatorname{Re} \Phi_{21}(k)|}.
\end{aligned}
\end{equation*}

Consequently, the $L^1$ and $L^\infty$ estimates for $[v^{(2)}_1 - v^p_1]_{21}$ are given by
\begin{equation*}
\begin{aligned}
    &\|[v^{(2)}_1 - v^p_1]_{21}\|_{L^1(\Sigma^{(2)} \cap D_\epsilon)} 
    \leq C \int_0^\infty |(k_0 + u)| e^{-ct (k_0 + u)^2} \, du 
    \leq \int_0^\infty v e^{-ct v^2} \, dv 
    \leq C t^{-1}, \\
    &\|[v^{(2)}_1 - v^p_1]_{21}\|_{L^\infty(\Sigma^{(2)} \cap D_\epsilon)} 
    \leq C \sup_{v \geq k_0} v e^{-ctv^2} 
    \leq C t^{-\frac{1}{2}}.
\end{aligned}
\end{equation*}

On the other hand, the estimate for $[v_1^{(2)} - v^p_1]_{23}$ follows a similar approach. Moreover, the estimates for the other jump matrices exhibit the same structure and behavior.
Noting the expansion of $m^p(y; \lambda)$ in Lemma \ref{mpexpansion}, for any positive integer $N$, it is found that  
\begin{equation*}
m^p(y; \lambda) = I + \sum_{j=1}^N \frac{m^p_j(y)}{\lambda^j} + \mathcal{O}\left(\frac{1}{\lambda^{N+1}}\right) 
= I + \sum_{j=1}^N \left(-\frac{3\sqrt{3}}{2}\right)^j \frac{m^p_j(y)}{t^{\frac{j}{2}} k^j} + \mathcal{O}\left(\frac{1}{t^{\frac{N+1}{2}}}\right).
\end{equation*}

As a result, it follows that  
\begin{equation*}
\|(m^p)^{-1} - I\|_{L^1 \cap L^\infty(\partial D_\epsilon)} = \mathcal{O}\left(t^{-\frac{1}{2}}\right).
\end{equation*}

Furthermore, by applying the residue theorem, we obtain that 
\begin{equation*}
    \frac{1}{2\pi i} \int_{\partial D_\epsilon(0)} \left((m^p)^{-1} - I\right) \, \mathrm{d}k 
    = \frac{1}{2\pi i} \int_{\partial D_\epsilon(0)}  \left(\frac{3\sqrt{3}}{2}\right) \frac{m^p_1(y)}{t^{\frac{1}{2}} k} \, \mathrm{d}k + \mathcal{O}(t^{-1}) \\
    = \frac{3\sqrt{3} \, m^p_1(y)}{2\sqrt{t}} + \mathcal{O}\left(t^{-1}\right).
\end{equation*}
\end{proof}
\subsection{Asymptotics of the function $\hat{m}(x, t; k)$}	
	Let $\hat{\Sigma} := \Sigma^{(2)} \cup \partial D_\epsilon(0)$, as illustrated in Figure \ref{hat Sigma}. The function $\hat{m}(x, t; k)$ is defined by  
\[
\hat{m}(x, t; k) = 
\begin{cases} 
    m^{(2)}(x, t; k) \left(m^p\right)^{-1}(x, t; k), & k \in D_\epsilon(0), \\ 
    m^{(2)}(x, t; k), & k \in \mathbb{C} \setminus D_\epsilon(0),
\end{cases}
\]
with jump matrices given by  
\[
\hat{v}(x, t; k) = 
\begin{cases} 
    m_{-}^p(x, t; k) v^{(2)} \left(m_{+}^p\right)^{-1}(x, t; k), & k \in D_\epsilon(0) \cap \Sigma^{(2)}, \\ 
    \left(m^p\right)^{-1}(x, t; k), & k \in \partial D_\epsilon(0), \\ 
    v^{(2)}(x,t;k), & k \in \hat{\Sigma} \setminus \bar{D}_\epsilon(0).
\end{cases}
\]

	\begin{figure}[h]
		\centering
		\begin{tikzpicture}[>=latex]
			\draw[very thick] (-4,0) to (4,0) node[black,right]{$\mathbb{R}$};
			\draw[very thick] (-2,-1.732*2) to (2,1.732*2)  node[black,above]{$\omega^2\mathbb{R}$};
			\draw[very thick] (2,-1.732*2) to (-2,1.732*2)    node[black,above]{$\omega\mathbb{R}$};
			\filldraw[black] (1.6,0) node[black,below=0.1mm]{$k_{0}$} circle (1.5pt);
			\filldraw[black] (-.8,-1.732*0.8) node[black,right=1mm]{$\omega^{2}k_{0}$} circle (1.5pt);
			\filldraw[black] (-.8,1.732*0.8) node[black,right=1mm]{$\omega k_{0}$} circle (1.5pt);
			
			\draw[->,very thick,rotate=60,black] (-1.6,0)  to (-3.2,0.8) node[above,black] {$\small 4$} ;
			\draw[-,very thick,rotate=60,black] (-1.6,0)  to (-4,1.2);
			\draw[->,very thick,rotate=60,black] (-1.6,0)  to (-3.2,-0.8) node[left,black] {$\small 5$};
			\draw[-,very thick,rotate=60,black] (-1.6,0)  to (-4,-1.2);

			\draw[->,very thick,rotate=120,black] (1.6,0)  to (3.2,0.8) node[above,black] {$\small 3$} ;
			\draw[-,very thick,rotate=120,black] (1.6,0)  to (4,1.2);
			\draw[->,very thick,rotate=120,black] (1.6,0)  to (3.2,-0.8) node[right,black] {$\small 2$};
			\draw[-,very thick,rotate=120,black] (1.6,0)  to (4,-1.2);

			\draw[->,very thick,black] (1.6,0)  to (3.2,0.8) node[above,black] {$\small 1$};
			\draw[-,very thick,black] (1.6,0)  to (4,1.2);
			\draw[->,very thick,black] (1.6,0)  to (3.2,-0.8) node[below,black] {$\small 6$};
			\draw[-,very thick,black] (1.6,0)  to (4,-1.2);
			\draw[->,very thick,black] (1.6,0)  to (3.2,0)node[below,black] {$\small 7$};
			\draw[->,very thick,rotate=60,black] (1.6,0)  to (3.2,0)node[right,black] {$\small 8$};
			\draw[-,very thick] (0.7,0)  to (0.9,0)node[below,black] { $\small 13$};
			\draw[-,very thick,rotate=120] (0.7,0)  to (0.9,0)node[right,black] { $\small 14$};
			\draw[-,very thick,rotate=-120] (0.7,0)  to (0.9,0)node[left,black] { $\small 15$};
			\draw[->,very thick,black] (-1.6,0)  to (-3.2,0);

			\draw[->,very thick,rotate=120,black] (1.6,0)  to (3.2,0)node[right,black] {$\small 9$};
			\draw[->,very thick,rotate=180,black] (1.6,0)  to (3.2,0)node[above,black] {$\small 10$};
			\draw[->,very thick,rotate=-120,black] (1.6,0)  to (3.2,0)node[left,black] {$\small 11$};
			\draw[->,very thick,rotate=-60,black] (1.6,0)  to (3.2,0)node[right,black] {$\small 12$};
			\draw[-,very thick] (-1.5,-1.732*1.5)  to (1.5,1.732*1.5) ;
			\draw[-,very thick] (-.5,1.732*.5)  to (.5,-1.732*.5) ;
			\draw[-,very thick] (-1.5,1.732*1.5)  to (1.5,-1.732*1.5);
			\draw[->, very thick] (0,0) circle [radius=2.2cm];
			\draw[<-,very thick] (0,2.2)  to (0.2,2.2) node[above,black] {$\small \partial D_{\epsilon}(0)$};
		\end{tikzpicture}
		\caption{{\protect\small
				The jump contour $\hat\Sigma$ of the RH problem for $\hat{m}(x,t;k)$.}}
		\label{hat Sigma}
	\end{figure}
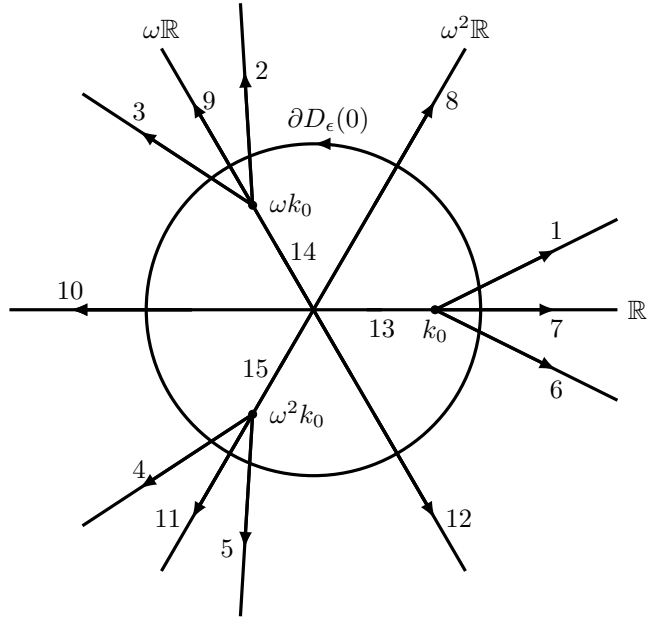
	\begin{lemma}\label{w estimate}
    Let $(x, t)$ satisfy $\frac{x}{\sqrt{t}} \leq C$ with $x > 0$ as $t\to\infty$, and define $\hat{w} = \hat{v} - I$. Then, for $1 \leq p \leq \infty$, the following estimates hold:
    \begin{equation}
        \|\hat{w}\|_{L^p\left(\partial D_\epsilon(0)\right)} \leq C t^{-\frac{1}{2}}, \quad 
        \|\hat{w}\|_{L^p\left(\hat{\Sigma} \setminus \bar{D}_\epsilon(0)\right)} \leq C t^{-\frac{3}{2}}, \quad 
        \|\hat{w}\|_{L^p\left(D_\epsilon(0) \cap \Sigma^{(2)}\right)} \leq C t^{-\frac{1}{2} - \frac{1}{2p}}.
    \end{equation}
\end{lemma}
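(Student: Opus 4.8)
The plan is to estimate $\hat w=\hat v-I$ separately on the three pieces of $\hat\Sigma$ — the circle $\partial D_\epsilon(0)$, the exterior part $\hat\Sigma\setminus\bar D_\epsilon(0)$, and the interior part $D_\epsilon(0)\cap\Sigma^{(2)}$ — reducing in each case to quantities already controlled in Lemmas~\ref{r decomposition} and~\ref{v2-vp}, and then to pass from the endpoint exponents $p=1,\infty$ to general $1\le p\le\infty$ by the elementary interpolation inequality $\|f\|_{L^p}\le\|f\|_{L^1}^{1/p}\|f\|_{L^\infty}^{1-1/p}$.

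On $\partial D_\epsilon(0)$ the definition of $\hat v$ gives $\hat w=(m^p)^{-1}-I$, so the bound $\|\hat w\|_{L^p(\partial D_\epsilon(0))}\le Ct^{-1/2}$ is exactly~(\ref{invmpestimate}) of Lemma~\ref{v2-vp} (the $L^1$ bound being automatic once the $L^\infty$ bound is known, since $\partial D_\epsilon(0)$ has finite length). On $\hat\Sigma\setminus\bar D_\epsilon(0)$ one has $\hat w=v^{(2)}-I$, and I would split this set into two families. On the rays $\Sigma^{(2)}_j$, $j=7,\dots,12$, and their $\mathbb{Z}_3$-images the nonzero off-diagonal entries are built from $\rho_{1,r},r_{1,r},r_{2,r}$ times unimodular exponentials (the relevant phases being purely imaginary on the coordinate axes), so by Lemma~\ref{r decomposition}(3) with $N=0$ their $L^p$ norms are $\mathcal{O}(t^{-3/2})$. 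On the portions of the saddle rays $\Sigma^{(2)}_{1},\dots,\Sigma^{(2)}_6$ lying outside $D_\epsilon(0)$, each nonzero entry is a bounded analytic factor times an exponential $e^{\pm t\Phi_{ij}}$ which, by the choice of the deformed contour and the signature tables of Figure~\ref{sign of phi}, decays at a rate $\ge c\,t|k-k_0|^2$; writing $\operatorname{Re}\Phi_{21}=-\sqrt3\,|k-k_0|^2\sin 2\theta$ along the relevant ray (and using the $\mathbb{Z}_3$-analogues for $\Phi_{31},\Phi_{32}$) and recalling $k_0\sim t^{-1/2}\to0$, so that $|k-k_0|\ge\epsilon/2$ there for $t$ large, these entries are $\mathcal{O}(e^{-ct})$ in every $L^p$. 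Both families are $\le Ct^{-3/2}$.

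On $D_\epsilon(0)\cap\Sigma^{(2)}$ I would use the model jump relation $m^p_-=m^p_+(v^p)^{-1}$ to rewrite $\hat v=m^p_-v^{(2)}(m^p_+)^{-1}=m^p_+(v^p)^{-1}v^{(2)}(m^p_+)^{-1}$, so that
\[
\hat w=\hat v-I=m^p_+\,(v^p)^{-1}\bigl(v^{(2)}-v^p\bigr)\,(m^p_+)^{-1}.
\]
Since $m^p$, $(m^p)^{-1}$ and $(v^p)^{-1}$ are uniformly bounded on $D_\epsilon(0)\cap\Sigma^{(2)}$ — this uses the properties of the Painlev\'e~IV model problem collected in Appendix~\ref{Appendix} together with the fact that $y=-\tfrac{\sqrt3 x}{2\sqrt t}$ stays in the compact set $|y|<\tfrac{\sqrt3}{2}c_1$ in the Painlev\'e region — we obtain $|\hat w|\le C\,|v^{(2)}-v^p|$ pointwise, whence $\|\hat w\|_{L^1}=\mathcal{O}(t^{-1})$ and $\|\hat w\|_{L^\infty}=\mathcal{O}(t^{-1/2})$ by Lemma~\ref{v2-vp}. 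Interpolating, $\|\hat w\|_{L^p(D_\epsilon(0)\cap\Sigma^{(2)})}\le C(t^{-1})^{1/p}(t^{-1/2})^{1-1/p}=Ct^{-1/2-1/(2p)}$, which is the asserted bound.

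The only step that is not pure bookkeeping is the exterior estimate: one must check that away from the disk the deformation really places each exponential $e^{\pm t\Phi_{ij}}$ in its decaying half-plane with a rate bounded below uniformly in $(x,t)$ — for which it is essential that $k_0\to0$, so the troublesome neighbourhoods of the saddles $\pm\omega^j k_0$ remain inside $D_\epsilon(0)$ — and to sort the many entries of the $v^{(2)}_j$ and their symmetric images into the two classes above. Once that organisation is in place, the remainder is the interpolation inequality together with the cited bounds of Lemmas~\ref{r decomposition} and~\ref{v2-vp}.
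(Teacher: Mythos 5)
Your proposal is correct and follows essentially the same route as the paper: the same piecewise identification of $\hat w$ on the three parts of $\hat\Sigma$, the bound $\|(m^p)^{-1}-I\|$ from (\ref{invmpestimate}) on the circle, Lemma \ref{r decomposition} plus exponential decay off the disk, and the conjugated factorization $\hat w=m^p_-(v^{(2)}-v^p)(m^p_+)^{-1}$ (your form $m^p_+(v^p)^{-1}(v^{(2)}-v^p)(m^p_+)^{-1}$ is algebraically identical) combined with Lemma \ref{v2-vp} and $L^1$--$L^\infty$ interpolation inside the disk. Your discussion of the exterior rays is in fact more explicit than the paper's one-line appeal to Lemma \ref{r decomposition}, but the argument is the same.
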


	\begin{proof}
    The function $\hat{w}$ can be expressed as
    $$
    \hat{w}(x, t; k) =
    \begin{cases}
        \begin{aligned}
            &m_{-}^p(x, t; k) (v^{(2)} - v^p) \left(m_{+}^p\right)^{-1}(x, t; k), && k \in D_\epsilon(0) \cap \Sigma^{(2)}, \\ 
            &\left(m^p\right)^{-1}(x, t; k) - I, && k \in \partial D_\epsilon(0), \\ 
            &v^{(2)}(x, t; k) - I, && k \in \hat{\Sigma} \setminus \bar{D}_\epsilon(0).
        \end{aligned}
    \end{cases}
    $$
    
    By Lemma \ref{r decomposition}, the $L^p$ estimate of $\hat{w}$ restricted to $\hat{\Sigma} \setminus \bar{D}_\epsilon(0)$ is established. Moreover, by using (\ref{invmpestimate}) and the interpolation theorem, one has
    $$
    \|\hat{w}\|_{L^p\left(\partial D_\epsilon(0)\right)} \leq 
    \|\hat{w}\|_{L^1\left(\partial D_\epsilon(0)\right)}^{\frac{1}{p}}
    \|\hat{w}\|_{L^{\infty}\left(\partial D_\epsilon(0)\right)}^{\frac{p-1}{p}} 
    \leq C t^{-\frac{1}{2}}.
    $$
    
    Furthermore, according to Lemma \ref{v2-vp}, it follows that
    $$
    \|\hat{w}\|_{L^p\left(D_\epsilon(0) \cap \Sigma^{(2)}\right)} 
    \leq C \|v^{(2)} - v^p\|_{L^1\left(\Sigma^{(2)} \cap D_\epsilon(0)\right)}^{\frac{1}{p}}
    \|v^{(2)} - v^p\|_{L^{\infty}\left(\Sigma^{(2)} \cap D_\epsilon(0)\right)}^{\frac{p-1}{p}} 
    \leq C t^{-\frac{1}{2} - \frac{1}{2p}}.
    $$
\end{proof}
	
	For a function $h$ defined on $\hat{\Sigma}$, introduce the Cauchy operator $\mathcal{C}$ as 
$$
(\mathcal{C}h)(k) := \int_{\hat{\Sigma}} \frac{h(k')}{k' - k} \, \mathrm{d}k', \quad k \in \mathbb{C} \setminus \hat{\Sigma},
$$
and denote the left and right non-tangential boundary values of $(\mathcal{C}h)(k)$ as $(\mathcal{C}_+h)(k)$ and $(\mathcal{C}_-h)(k)$, respectively. Define the operator 
$$
\mathcal{C}_{\hat{w}}h = \mathcal{C}_-\left(h \hat{w}\right),
$$
where $\hat{w}$ is the jump matrix defined on $\hat{\Sigma}$. By Lemma \ref{w estimate}, the operator $\mathcal{C}_{\hat{w}}$ is well-posed from ${L}^2(\hat{\Sigma}) + L^{\infty}(\hat{\Sigma})$ to ${L}^2(\hat{\Sigma})$. Furthermore, using the estimates in Lemma \ref{w estimate}, we obtain that
$$
\|\mathcal{C}_{\hat{w}}\|_{L^2(\hat{\Sigma}) \to L^2(\hat{\Sigma})} \leq C \|\hat{w}\|_{L^{\infty}(\hat{\Sigma})} \leq C t^{-1/2}.
$$

Consequently, for sufficiently large $t$, the operator $I - \mathcal{C}_{\hat{w}}$ is invertible as a map from $L^{2}(\hat{\Sigma})$ to $L^{2}(\hat{\Sigma})$. Therefore, by the standard small-norm theory, the solution $\hat{m}(x, t; k)$ exists and is unique, which is given by 
$$
\hat{m}(x, t; k) = I + \mathcal{C}(\hat{\mu} \hat{w}) 
= I + \int_{\hat{\Sigma}} \frac{\hat{\mu}(x, t; \eta) \hat{w}(x, t; \eta)}{\eta - k} \frac{\mathrm{d} \eta}{2 \pi i}, \quad k \in \mathbb{C} \setminus \hat{\Sigma},
$$
where 
$$
\hat{\mu} = I + \left(I - \mathcal{C}_{\hat{w}}\right)^{-1} \mathcal{C}_{\hat{w}}I.
$$

Moreover, for $1 < p < \infty$, the following estimate holds:
$$
\|\hat{\mu} - I\|_{L^{p}(\hat{\Sigma})} 
\leq \sum_{j=1}^\infty \|\mathcal{C}_{\hat{w}}\|_{L^{p}(\hat{\Sigma}) \to L^p(\hat{\Sigma})}^{j-1} \|\mathcal{C}_{\hat{w}}I\|_{L^p(\hat{\Sigma})} 
\leq C_p\|\hat{w}\|_{L^p(\hat{\Sigma})} 
\leq C_p t^{-\frac{1}{2} - \frac{1}{2p}}.
$$

As a result, as $t \to \infty$, the solution $\hat{m}(x, t; k)$ satisfies the following asymptotics.

	\begin{lemma}\label{mestimate}
    For \( \frac{x}{\sqrt{t}} \leq C \) with \( x \geq 0 \), and as \( t \to \infty \), the following non-tangential limit exists:
    \begin{equation}
        \lim_{k \to \infty} k(\hat{m}(x, t; k) - I) = -\frac{1}{2 \pi i} \int_{\partial D_\epsilon(0)} \hat{w}(x, t; k) \, \mathrm{d}k + \mathcal{O}(t^{-1}).
    \end{equation}
\end{lemma}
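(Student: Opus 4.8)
The plan is to read off the $1/k$ coefficient of $\hat m(x,t;k)$ from the integral representation
\[
\hat m(x,t;k) = I + \frac{1}{2\pi i}\int_{\hat\Sigma}\frac{\hat\mu(x,t;\eta)\,\hat w(x,t;\eta)}{\eta-k}\,\mathrm d\eta
\]
obtained above. Using $\frac{k}{\eta-k}=-1+\frac{\eta}{\eta-k}$ gives
\[
k\bigl(\hat m(x,t;k)-I\bigr) = -\frac{1}{2\pi i}\int_{\hat\Sigma}\hat\mu\hat w\,\mathrm d\eta + \frac{1}{2\pi i}\int_{\hat\Sigma}\frac{\eta\,\hat\mu(\eta)\hat w(\eta)}{\eta-k}\,\mathrm d\eta .
\]
First I would check that $\hat\mu\hat w$ and $\eta\,\hat\mu\hat w$ lie in $L^1(\hat\Sigma)$: on the bounded parts $\partial D_\epsilon(0)$ and $D_\epsilon(0)\cap\Sigma^{(2)}$ this follows from $\hat w\in L^1$ in Lemma~\ref{w estimate} together with $\hat\mu-I\in L^2$ and the Cauchy--Schwarz inequality, while on the unbounded rays $\hat\Sigma\setminus\bar D_\epsilon(0)$ the jump $\hat w=v^{(2)}-I$ decays exponentially, so the weight $\eta$ is harmless. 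Since $\mathrm{dist}(k,\hat\Sigma)\ge c|k|$ whenever $k\to\infty$ non-tangentially to $\hat\Sigma$, the second integral is $\mathcal O(1/|k|)$ and drops out in the limit; hence the non-tangential limit exists and equals $-\frac{1}{2\pi i}\int_{\hat\Sigma}\hat\mu\hat w\,\mathrm d\eta$.

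It then remains to isolate the $\partial D_\epsilon(0)$ contribution and show everything else is $\mathcal O(t^{-1})$. I would write $\hat\mu=I+(\hat\mu-I)$ and split $\hat\Sigma=\partial D_\epsilon(0)\cup\bigl(D_\epsilon(0)\cap\Sigma^{(2)}\bigr)\cup\bigl(\hat\Sigma\setminus\bar D_\epsilon(0)\bigr)$. The term $-\frac{1}{2\pi i}\int_{\partial D_\epsilon(0)}\hat w\,\mathrm d\eta$ is kept --- it is precisely the expression appearing in Lemma~\ref{mestimate}. For the rest, Lemma~\ref{w estimate} gives $\|\hat w\|_{L^1(D_\epsilon(0)\cap\Sigma^{(2)})}\le Ct^{-1}$ and $\|\hat w\|_{L^1(\hat\Sigma\setminus\bar D_\epsilon(0))}\le Ct^{-3/2}$, so the two remaining integrals of $\hat w$ are $\mathcal O(t^{-1})$; and by Cauchy--Schwarz together with $\|\hat\mu-I\|_{L^2(\hat\Sigma)}\le Ct^{-3/4}$ (the $p=2$ case of the estimate established above) and $\|\hat w\|_{L^2(\hat\Sigma)}\le Ct^{-1/2}$, the integral of $(\hat\mu-I)\hat w$ over all of $\hat\Sigma$ is $\mathcal O(t^{-3/4}\cdot t^{-1/2})=\mathcal O(t^{-5/4})$. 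Adding up, the total error is $\mathcal O(t^{-1})$, which is the assertion; uniformity in the range $\frac{x}{\sqrt t}\le C$, $x\ge 0$, is inherited from the uniformity of the constants in Lemmas~\ref{v2-vp} and~\ref{w estimate}.

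This is a standard small-norm computation, so I do not anticipate a genuine obstacle. The only point deserving attention is bookkeeping the exponents in the last paragraph --- one must make sure that no single piece of $\hat\Sigma$ (in particular the $D_\epsilon(0)\cap\Sigma^{(2)}$ piece, which already sits at the $t^{-1}$ threshold) produces an error worse than $\mathcal O(t^{-1})$, and that the $L^2$ bounds paired in the $(\hat\mu-I)\hat w$ term are the sharp ones coming from Lemma~\ref{w estimate}; everything then closes.
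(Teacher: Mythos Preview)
Your argument is correct and follows essentially the same route as the paper: write $\hat\mu\hat w=\hat w+(\hat\mu-I)\hat w$, keep the $\partial D_\epsilon(0)$ piece of $\int\hat w$, and bound the remainder by combining the $L^1$ estimates of Lemma~\ref{w estimate} with H\"older (the paper leaves the conjugate exponents $p,q$ generic where you take $p=q=2$). Your extra paragraph justifying that the non-tangential limit actually exists is a welcome addition the paper omits; the only cosmetic point is that the $t^{-5/4}$ you quote for $\int(\hat\mu-I)\hat w$ leans on the paper's displayed bound $\|\hat\mu-I\|_{L^2}\le C t^{-3/4}$, which is in fact only $t^{-1/2}$ once the $\partial D_\epsilon(0)$ contribution to $\|\hat w\|_{L^2}$ is accounted for --- but that still gives $\mathcal O(t^{-1})$, so nothing changes.
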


\begin{proof}
    Decompose the term \( -\int_{\hat{\Sigma}} {\hat{\mu}(x, t; k) \hat{w}(x, t; k)} \frac{\mathrm{d} k}{2 \pi i} \) into:
    \[
    -\frac{1}{2 \pi i} \int_{\partial D_\epsilon(0)} \hat{w}(x, t; k) \, \mathrm{d}k + Q_1(x, t) + Q_2(x, t),
    \]
    where
    \[
    Q_1(x, t) := -\frac{1}{2 \pi i} \int_{\hat{\Sigma}} (\hat{\mu}(x, t; k) - I) \hat{w}(x, t; k) \, \mathrm{d}k,
    \]
    and
    \[
    Q_2(x, t) := -\frac{1}{2 \pi i} \int_{\hat{\Sigma} \backslash \partial {D}_\epsilon(0)} \hat{w}(x, t; k) \, \mathrm{d}k.
    \]

    For \( Q_1(x, t) \), the Hölder's inequality indicates that
    \[
    |Q_1(x, t)| \leq C \|\hat{\mu}(x, t; \cdot) - I\|_{L^p(\hat{\Sigma})} \|\hat{w}(x, t; \cdot)\|_{L^q(\hat{\Sigma})} \leq t^{-1},
    \]
    where \( \frac{1}{p} + \frac{1}{q} = 1 \). 

    For \( Q_2(x, t) \), it is derived that
    \[
    |Q_2(x, t)| \leq C \|\hat{w}(x, t; \cdot)\|_{L^1\left(\hat{\Sigma} \backslash \partial D_\epsilon(0)\right)} \leq t^{-1}.
    \]
\end{proof}
\subsection{Asymptotics of the potentials $p(x,t)$ and $q(x,t)$ in the Painlev\'e region.}
Finally, by tracing back the transformations of the RH problem in the previous sections,  the following expression for \( m(x,t;k) \) is obtained:
\[
m(x,t;k) = \hat{m}(x,t;k) H^{-1} \Delta^{-1} G^{-1}, \quad k \in \mathbb{C} \setminus (\Sigma \cup \bar{D}_{\epsilon}(0)),
\]
where $G,\Delta$ and $H$ are defined in (\ref{G}),(\ref{Deltadefination}) and (\ref{H}), respectively. Thus as \( t \to \infty \), the asymptotic representations of the potentials \( p(x,t) \) and \( q(x,t) \) are
\begin{equation}\label{pq Painleve}
\begin{aligned}
p(x, t) &= \frac{3}{2}\left(\lim_{k \to \infty} k[\hat{m}(x,t;k)-I]_{31} + \lim_{k \to \infty} k[\hat{m}(x,t;k)-I]_{32}\right) \\
&= -\frac{3}{2}\left(\frac{1}{2 \pi i} \int_{\partial D_\epsilon(0)} \left(\hat{w}(x, t ; k)_{31} + \hat{w}(x, t ; k)_{32}\right) \mathrm{d}k \right) + \mathcal{O}\left(t^{-1}\right) \\
&= -\frac{3}{2} \cdot \frac{3\sqrt{3}\left([m_1^p(y)]_{31} + [m_1^p(y)]_{32}\right)}{2\sqrt{t}} + \mathcal{O}\left(t^{-1}\right) \\
&= \frac{\sqrt{3}}{4\sqrt{t}} \frac{\left( P_{\rm IV}'(y) + \frac{2}{3}\right)}{P_{\rm IV}(y)} + \mathcal{O}\left(t^{-1}\right), \\
q(x, t) &= -\frac{3\sqrt{3}}{2\sqrt{t}} \cdot \frac{1}{2 \omega(\omega-1)} \left(\left([m_1^p(y)]_{31} - [m_1^p(y)]_{32}\right)\right) + \mathcal{O}\left(t^{-1}\right) \\
&= \frac{\sqrt{3}}{4\sqrt{t}} \left( P_{\rm IV}(y) + \frac{2}{3} y \right) + \mathcal{O}\left(t^{-1}\right),
\end{aligned}
\end{equation}
where \( y = -\frac{\sqrt{3}x}{2\sqrt{t}} \) and \( P_{\rm IV}(y) \) satisfies the Painlevé IV equation in (\ref{PIV}) with parameters $\alpha = -\frac{1}{6}$ and $\beta = -\frac{2}{3}$.

\begin{remark}
    Indeed, substituting the leading-order term in (\ref{pq Painleve}) into the modified Boussinesq equation (\ref{mbequation}) with the self-similar transformation \( y = -\frac{\sqrt{3}x}{2\sqrt{t}} \) and integrating once with respect to variable \( y \), one can also derive the Painlevé VI equation in (\ref{PIV}) with parameters  $\alpha = -\frac{1}{6}$ and $\beta = -\frac{2}{3}$.
\end{remark}
\begin{remark}
    For the case  of \( \frac{|x|}{\sqrt{t}} < c_1 \) with \( x \leq 0 \) and $c_1>0$, the analysis proceeds in a similar manner. Specifically, let \( \zeta := -\frac{x}{t} \) with \( x \geq 0 \), and rewrite the phase functions
$
\vartheta_{ij}(x, t; k) = t \left[ \left( \omega^j - \omega^i \right) k \zeta + \left( \omega^{2i} - \omega^{2j} \right) k^2 \right] := t \Phi_{ij}(\zeta, k),
$
then the analysis follows the same approach as described above.
\end{remark}

\section{The Painlevé transition region}\label{Transition}

This section investigates the Painlevé transition region characterized by \( c_1 t^{\frac{1}{2}} \le x \ll c_3 t \) for certain positive constants \( c_1 \) and \( c_3 \). In this case, take $\tau= t k_0^2 $ then it follows that \( k_0 \to 0 \) and \( \tau \to \infty \) as \( t \to \infty \). Assume that \( \tau = \mathcal{O}(t^l) \) with \( 0 < l < 1 \), which implies that \( k_0 = \mathcal{O}(t^{\frac{l-1}{2}}) \). Similar to the previous section, introduce \( k = k_0 z \) to fix the critical points. The transformation of the RH problem proceeds in a similar manner. As a result, we obtain the RH problem for \( m^{(2)}(x, t; k_0 z) \) by replacing \( k \) with \( k_0 z \). In detail, the phase functions \( \vartheta_{ij}(x,t;k) \) \( (1 \leq j < i \leq 3) \) are given by  
\begin{equation}
    \vartheta_{ij}(\tau; z) = \tau \left( 2(\omega^i - \omega^j) z + (\omega^{2i} - \omega^{2j}) z^2 \right).
\end{equation}

Let \[ 
\mathbb{D} :=
\begin{cases}
\begin{aligned}
&\{ k \in \mathbb{C} \mid |k| < \frac{1}{\sqrt{\tau}}\},&&0<l\le\frac{1}{2},\\
    &\{ k \in \mathbb{C} \mid |k| < k_0+\epsilon \},&&\frac{1}{2}<l<1,
\end{aligned}
\end{cases} \] where $\epsilon$ is a fixed positive constant satisfying $\epsilon\ll1$. Indeed, $\mathbb{D}$ in the $k$-plane denotes the open disk containing the critical points \( k_0, \omega k_0 \) and \( \omega^2 k_0 \). Then, the following lemma holds.

\begin{lemma}\label{v2-vptau}
    For some positive integer $N$, suppose that $\frac{1}{N}<\tau$ with $x\ge0$, then the jump matrices for $m^{(2)}(x,t;k)$ satisfy the following estimates
		\begin{equation}
			\|v^{(2)}-v^p\|_{L^1(\Sigma^{(2)}\cap \mathbb{D})}\le\mathcal{O}\left((t\tau)^{-1}\right),\ \|v^{(2)}-v^p\|_{L^\infty(\Sigma^{(2)}\cap \mathbb{D})}=\mathcal{O}\left(k_0e^{-c\tau}\right).
		\end{equation}
        
		Moreover, on $\partial \mathbb{D}$, the function $m^p(x,t;k)$ obeys that
		\begin{equation}\label{invmptau}
			\|(m^p)^{-1}-I\|_{L^{1}\cap L^{\infty}(\partial \mathbb{D})} =\mathcal{O}\left({t^{-\frac{1}{2}}}\right).
		\end{equation}
        \par
		In addition, if $0<l\le\frac{1}{2}$, we have
		\begin{equation}
			\frac{1}{2\pi i }\int_{\partial \mathbb{D}}\left((m^p)^{-1}-I\right)\mathrm{d}k=\frac{3\sqrt{3}m_1^p(y)}{2\sqrt{t}}+\mathcal{O}\left(\frac{\sqrt{\tau}}{t}\right),
		\end{equation}
     and if $\frac{1}{2}<l<1$, it is obtained that
      \begin{equation}
			\frac{1}{2\pi i }\int_{\partial \mathbb{D}}\left((m^p)^{-1}-I\right)\mathrm{d}k=\frac{3\sqrt{3}m_1^p(y)}{2\sqrt{t}}+\mathcal{O}\left((tk_0)^{-1}\right).
		\end{equation}
\end{lemma}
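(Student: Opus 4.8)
The plan is to repeat the Deift--Zhou parametrix-matching argument of Lemma~\ref{v2-vp}, now carried out in the rescaled spectral variable $k=k_0z$ (so that $\lambda=-\tfrac{2\sqrt t}{3\sqrt3}k\asymp\sqrt\tau\,z$ and $y=-\sqrt3\sqrt\tau$), keeping the same Painlev\'e~IV parametrix $m^p(y;\lambda)$ of Appendix~\ref{Appendix} but now in the regime $y\to-\infty$. By the $\mathbb Z_2\times\mathbb Z_3$ symmetries~(\ref{symmetry}) it suffices to estimate $v^{(2)}_1-v^p_1$ on the single arc $\Sigma^{(2)}_1\cap\mathbb D$ emanating from $k_0$, where only the $(2,1)$ and $(2,3)$ entries are nonzero; the six arcs attached to $\pm\omega^jk_0$ are then equivalent.

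For the $(2,1)$ entry I would split, exactly as in Lemma~\ref{v2-vp}, $[v^{(2)}_1-v^p_1]_{21}=\bigl(\tfrac{\delta_1^2}{\delta_3\delta_5}\rho^*_{1,a}(k)-e^{-2\pi\nu}\rho_1^*(0)\bigr)e^{t\Phi_{21}}$ into a reflection-coefficient error, bounded by Lemma~\ref{r decomposition} (which also supplies the vanishing factor $|k-\omega k_0|$ carried by $r^*_{2,a}(\omega k)$), and a $\delta$-error, bounded by Propositions~\ref{deltaproposition}--\ref{deltaproposition1} together with the H\"older estimate~(\ref{Chi property}), namely $|\tfrac{\delta_1^2}{\delta_3\delta_5}-e^{-2\pi\nu}|\le C|k|(1+|\ln|k||)$. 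The saddle geometry of the phases is unchanged, so $\Re\Phi_{21}(k)\le-c|k-k_0|^2$ on $\Sigma^{(2)}_1$, which in the rescaled variable reads $t\Phi_{21}=\vartheta_{21}(\tau;z)$ with $\Re\vartheta_{21}(\tau;z)\le-c\tau|z-1|^2$. Hence the integrand of $v^{(2)}-v^p$ carries a Gaussian $e^{-c\tau|z-1|^2}$ which localizes its mass either to a bounded $z$-window about $z=1$, where $|k|=k_0|z|\to0$ already forces the prefactors to be $\mathcal O(k_0\ln k_0)$, or to the range $|z-1|\gtrsim1$, where the phase yields the exponential gain $e^{-c\tau}$ and the vanishing factor $|k-\omega k_0|\lesssim k_0$ produces the claimed $\mathcal O(k_0e^{-c\tau})$ in $L^\infty$; integrating in $L^1$ (with $|dk|=k_0|dz|$) then gives $\mathcal O((t\tau)^{-1})$. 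The two sub-cases $0<l\le\tfrac12$ and $\tfrac12<l<1$ are treated separately because the radius of $\mathbb D$ — hence the admissible $z$-range and the size of the phase on $\partial\mathbb D$ — is $\tau^{-1/2}$ in the first and the fixed $k_0+\epsilon$ in the second. The $(2,3)$ entry and, by symmetry, all other arcs are identical.

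For the behaviour on $\partial\mathbb D$ I would use the large-$\lambda$ expansion $m^p(y;\lambda)=I+\sum_{j\ge1}m_j^p(y)\lambda^{-j}+\mathcal O(\lambda^{-N-1})$ of Lemma~\ref{mpexpansion}, rewrite $\lambda^{-j}=(-\tfrac{3\sqrt3}{2})^jt^{-j/2}k^{-j}$, and read off $\|(m^p)^{-1}-I\|_{L^1\cap L^\infty(\partial\mathbb D)}=\mathcal O(t^{-1/2})$; since only the $k^{-1}$ term survives the loop integral, the residue theorem gives $\tfrac1{2\pi i}\int_{\partial\mathbb D}\bigl((m^p)^{-1}-I\bigr)\,dk=\tfrac{3\sqrt3}{2\sqrt t}m_1^p(y)+(\text{remainder})$, and tracking the radius of $\mathbb D$ together with the $y\to-\infty$ growth of the omitted coefficients shows the remainder is $\mathcal O(\sqrt\tau/t)$ when $0<l\le\tfrac12$ and $\mathcal O((tk_0)^{-1})$ when $\tfrac12<l<1$; these two estimates agree (both $\mathcal O(t^{-3/4})$) at the crossover $l=\tfrac12$, which is the consistency check that glues the two definitions of $\mathbb D$.

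The main obstacle is uniformity as $\tau\to\infty$, i.e. $y\to-\infty$: unlike the Painlev\'e region where $y=\mathcal O(1)$, here $m^p(y;\lambda)$ and its coefficients $m_j^p(y)$ are evaluated at large negative argument, so controlling the tail of the $\lambda$-expansion and the growth of the $m_j^p(y)$ requires the Its--Kapaev $y\to-\infty$ asymptotics~(\ref{p4_long_asymp}) of the Clarkson--McLeod solution of~(\ref{PIV}); one must check that the resulting corrections stay dominated by $\tfrac{3\sqrt3}{2\sqrt t}m_1^p(y)$ and that the small-norm inversion of $I-\mathcal{C}_{\hat{w}}$ remains valid uniformly across the whole window $0<l<1$. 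Everything else — the triangular $L^p$ bookkeeping and the Gaussian integrals — parallels Lemma~\ref{v2-vp} essentially verbatim.
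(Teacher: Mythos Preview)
Your proposal is correct and follows essentially the same route as the paper: split the nonzero entries of $v^{(2)}_1-v^p_1$ via Lemma~\ref{r decomposition} and Propositions~\ref{deltaproposition}--\ref{deltaproposition1} to obtain the pointwise bound $C|k|\,e^{-\frac{3t}{4}|\Re\Phi_{21}|}$, integrate the resulting Gaussians for the $L^1$ and $L^\infty$ norms, and then use the $\lambda$-expansion of Lemma~\ref{mpexpansion} together with the residue theorem on $\partial\mathbb D$, treating the two radii of $\mathbb D$ separately. Your extra caution about uniformity of the coefficients $m^p_j(y)$ as $y\to-\infty$ is reasonable but goes beyond what the paper's proof actually does --- it simply invokes Lemma~\ref{mpexpansion} without discussing the large-$|y|$ behaviour.
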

	\begin{proof}
	   Similarly, only the $(2,1)$ and $(2,3)$ elements of $v^{(2)}_1 - v^p_1$ are nonzero. Accordingly, the estimate for $[v^{(2)}_1 - v^p_1]_{21}$ is given by
\begin{equation*}
\begin{aligned}
    |[v^{(2)}_1 - v^p_1]_{21}| &= \left| \left( \frac{\delta_1^2}{\delta_3 \delta_5} \rho_{1,a}^*(k) - e^{-2\pi \nu} \rho_1^*(0) \right) e^{t\Phi_{21}} \right| \\
    &\le \left| \frac{\delta_1^2}{\delta_3 \delta_5} \left( \rho_{1,a}^*(k) - \rho_1^*(0) \right) e^{t\Phi_{21}} \right| 
    + \left| \left( \frac{\delta_1^2}{\delta_3 \delta_5} - e^{-2\pi \nu} \right) \rho_1^*(0) e^{t\Phi_{21}} \right|\\
    &\le C |k| e^{-\frac{3t}{4} |\operatorname{Re} \Phi_{21}(k)|}.
\end{aligned}
\end{equation*}
Thus, for $k = k_0 + ue^{\frac{\pi i}{6}}$, it can be seen that 
\begin{equation*}
\|[v^{(2)}_1 - v^p_1]_{21}\|_{L^1(\Sigma^{(2)} \cap \mathbb{D})} \le C \int_{0}^{\infty} (k_0 + u)e^{-ct(k_0 + u)^2}du 
\le C \int_{k_0}^{\infty} ve^{-ctv^2}dv 
\le \frac{C}{t\tau},
\end{equation*}
and
\begin{equation*}
\|[v^{(2)}_1 - v^p_1]_{21}\|_{L^{\infty}(\Sigma^{(2)} \cap \mathbb{D})} \le C \sup_{u \ge 0} (k_0 + u)e^{-ct(k_0 + u)^2} 
\le C \sup_{v \ge k_0} ve^{-ctv^2} 
\le k_0 e^{-c\tau}.
\end{equation*}
Noting that
\begin{equation*}
m^p(y; \lambda) = I + \sum_{j=1}^N \frac{m^p_j(y)}{\lambda^j} + \mathcal{O}\left(\frac{1}{\lambda^{N+1}}\right)
= I + \sum_{j=1}^N \left(-\frac{3\sqrt{3}}{2}\right)^j \frac{m^p_j(y)}{t^{\frac{j}{2}} k^j } 
+ \mathcal{O}\left(\frac{1}{k^{N+1}t^{\frac{N+1}{2}}}\right),
\end{equation*}
it follows that
\begin{equation*}
\|(m^p)^{-1} - I\|_{L^{1} \cap L^{\infty}(\partial{\mathbb{D}})} = \mathcal{O}\left(\frac{1}{ t^{\frac{1}{2}}}\right).
\end{equation*}
Moreover, for $0<l\leq\frac{1}{2}$ , on $\partial \mathbb{D}$, one has  
\begin{equation*}
\frac{1}{2\pi i} \int_{\partial \mathbb{D}}\left((m^p)^{-1} - I\right) \mathrm{d}k 
= \frac{3\sqrt{3} m_1^p(y)}{2\sqrt{t}} + \mathcal{O}\left(\frac{\sqrt{\tau}}{t}\right),
\end{equation*}
and for $\frac{1}{2}<l<1$, it follows that
$$
\frac{1}{2\pi i} \int_{\partial \mathbb{D}}\left((m^p)^{-1} - I\right) \mathrm{d}k 
=\frac{3\sqrt{3} m_1^p(y)}{2\sqrt{t}} + \mathcal{O}\left({(tk_0)}^{-1}\right).
$$
\end{proof}
\subsection{Asymptotics of the potentials $p(x,t)$ and $q(x,t)$ in the Painlevé transition region.}	

  For convenience, adopt some notational abuse and denote $\hat{\Sigma} := \Sigma^{(2)} \cup \partial \mathbb{D}$. The function $\hat{m}(x, t; k)$ is defined as
\[
\hat{m}(x, t; k) = 
\begin{cases} 
    m^{(2)}(x, t; k) \left(m^p\right)^{-1}(x, t; k), & k \in \mathbb{D}, \\ 
    m^{(2)}(x, t; k), & k \in \mathbb{C} \setminus \bar{\mathbb{D}},
\end{cases}
\]
and define $\hat{w} = \hat{v} - I$. Then, the following lemma holds and the proofs of the next two lemmas are similar to Lemmas \ref{w estimate} and \ref{mestimate}, so we omit them for brevity.

\begin{lemma}
   For some positive integer $N$, let $(x, t)$ satisfy $\frac{1}{N}<\tau$ with $x\ge0$. Then, for $1 \leq p \leq \infty$, the following estimates hold:
    \begin{equation}
        \|\hat{w}\|_{L^p\left(\partial \mathbb{D}\right)} \leq C t^{-\frac{1}{2}}, \quad 
        \|\hat{w}\|_{L^p\left(\hat{\Sigma} \setminus \bar{\mathbb{D}}\right)} \leq C t^{-\frac{3}{2}}, \quad 
        \|\hat{w}\|_{L^1\left(\mathbb{D} \cap \Sigma^{(2)}\right)} \leq C (t\tau)^{-1}.
    \end{equation}
\end{lemma}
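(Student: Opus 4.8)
The plan is to repeat, essentially verbatim, the piecewise argument used for Lemma~\ref{w estimate}, with the fixed disk $D_\epsilon(0)$ replaced by the $\tau$-dependent disk $\mathbb{D}$ and with Lemma~\ref{v2-vp} replaced by Lemma~\ref{v2-vptau}. Recall that by construction
\[
\hat w(x,t;k)=\hat v(x,t;k)-I=
\begin{cases}
m_-^p(x,t;k)\bigl(v^{(2)}-v^p\bigr)(m_+^p)^{-1}(x,t;k), & k\in\mathbb{D}\cap\Sigma^{(2)},\\
(m^p)^{-1}(x,t;k)-I, & k\in\partial\mathbb{D},\\
v^{(2)}(x,t;k)-I, & k\in\hat\Sigma\setminus\bar{\mathbb{D}},
\end{cases}
\]
so it suffices to bound the three pieces separately. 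Note that, unlike in Lemma~\ref{w estimate}, on $\mathbb{D}\cap\Sigma^{(2)}$ we only need the $L^1$ bound, which simplifies the last step.

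First I would treat the piece on $\hat\Sigma\setminus\bar{\mathbb{D}}$. On the contours $\Sigma^{(2)}_j$ with $j=7,\dots,12$ the nonzero entries of $v^{(2)}-I$ are products of $\rho_{1,r},\,r_{1,r},\,r_{2,r}$ (and their Schwartz reflections) with bounded exponentials, so by Lemma~\ref{r decomposition}(3) they are $\mathcal{O}(t^{-N-\frac32})$ in every $L^p$; on all the remaining contours lying outside $\bar{\mathbb{D}}$, the relevant $\operatorname{Re}\Phi_{ij}$ is bounded away from zero (the saddle points all lie inside $\mathbb{D}$), so $v^{(2)}-I$ decays exponentially in $t$ there. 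Adding these contributions gives $\|\hat w\|_{L^p(\hat\Sigma\setminus\bar{\mathbb{D}})}\le Ct^{-3/2}$, uniformly for $\tau>1/N$.

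Next, on $\partial\mathbb{D}$ we have $\hat w=(m^p)^{-1}-I$, and estimate~(\ref{invmptau}) of Lemma~\ref{v2-vptau} gives precisely $\|(m^p)^{-1}-I\|_{L^1\cap L^\infty(\partial\mathbb{D})}=\mathcal{O}(t^{-1/2})$; the Riesz--Thorin interpolation theorem then upgrades this to $\|\hat w\|_{L^p(\partial\mathbb{D})}\le Ct^{-1/2}$ for every $1\le p\le\infty$. Finally, on $\mathbb{D}\cap\Sigma^{(2)}$ we use that the local model $m^p$ of Appendix~\ref{Appendix} has boundary values with $(m^p)^{\pm1}$ bounded uniformly in the parameters over the range under consideration, so $\|\hat w\|_{L^1(\mathbb{D}\cap\Sigma^{(2)})}\le C\|v^{(2)}-v^p\|_{L^1(\Sigma^{(2)}\cap\mathbb{D})}$, and the right-hand side is $\mathcal{O}((t\tau)^{-1})$ by Lemma~\ref{v2-vptau}. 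Collecting the three bounds yields the claim.

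I expect the only delicate point to be making sure the constant $C$ is genuinely uniform over the full transition range $c_1t^{1/2}\le x\ll c_3t$ — equivalently over $\tau=\mathcal{O}(t^l)$ with $0<l<1$ — and in particular that the two definitions of $\mathbb{D}$ (radius $\tau^{-1/2}$ for $0<l\le\frac12$, radius $k_0+\epsilon$ for $\frac12<l<1$) are handled consistently on $\partial\mathbb{D}$; but this uniformity is already built into the statement of Lemma~\ref{v2-vptau}, so the remaining work is bookkeeping, exactly as in the Painlevé-region case, and the proof is omitted as in the text.
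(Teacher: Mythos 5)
Your proposal is correct and follows essentially the same route the paper intends: the paper explicitly omits this proof, stating it is "similar to Lemma \ref{w estimate}," i.e.\ the same three-piece decomposition of $\hat w$ with Lemma \ref{v2-vp} replaced by Lemma \ref{v2-vptau}, which is exactly what you carry out. Your handling of the boundary piece by interpolation from (\ref{invmptau}) and of the interior piece via boundedness of $(m^p)^{\pm1}$ matches the argument used in the Painlev\'e-region case.
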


In particular, by the standard small-norm theory, the solution $\hat{m}(x, t; k)$ exists and is unique, given by 
$$
\hat{m}(x, t; k) = I + \mathcal{C}(\hat{\mu} \hat{w}) 
= I + \int_{\hat{\Sigma}} \frac{\hat{\mu}(x, t; \eta) \hat{w}(x, t; \eta)}{\eta - k} \frac{\mathrm{d} \eta}{2 \pi i}, \quad k \in \mathbb{C} \setminus \hat{\Sigma},
$$
where 
$$
\hat{\mu} = I + \left(I - \mathcal{C}_{\hat{w}}\right)^{-1} \mathcal{C}_{\hat{w}}I.
$$
\begin{lemma}
   For a positive integer $N$, let $(x, t)$ satisfy $\frac{1}{N} < \tau$ with $x \ge 0$. If $0 < l \leq\frac{1}{2}$ , the following non-tangential limit exists:
\begin{equation}\label{minfty-1}
    \lim_{k \to \infty} k(\hat{m}(x, t; k) - I) = -\frac{1}{2 \pi i} \int_{\partial \mathbb{D}} \hat{w}(x, t; k) \, \mathrm{d}k + \mathcal{O}\left(\frac{\sqrt{\tau}}{t}\right),
\end{equation}
while for the case $\frac{1}{2} < l < 1$, the non-tangential limit becomes
\begin{equation}\label{minfty-2}
     \lim_{k \to \infty} k(\hat{m}(x, t; k) - I) = -\frac{1}{2 \pi i} \int_{\partial \mathbb{D}} \hat{w}(x, t; k) \, \mathrm{d}k + \mathcal{O}\left({(tk_0)}^{-1}\right).
\end{equation}
\end{lemma}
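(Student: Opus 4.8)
The plan is to run the same three-step scheme used in the proof of Lemma~\ref{mestimate}, now with the disk $\mathbb{D}$ in place of $D_\epsilon(0)$ and with the radius of $\mathbb{D}$ — of order $\tau^{-1/2}$ when $0<l\le\tfrac12$ and of fixed order $\epsilon$ when $\tfrac12<l<1$ — tracked through the estimates. First I would note that, from $\hat m=I+\mathcal{C}(\hat\mu\hat w)$ together with $\hat\mu-I\in L^2(\hat\Sigma)$ and $\hat w\in L^1(\hat\Sigma)\cap L^2(\hat\Sigma)$ (these follow from the preceding lemma, using interpolation against the $L^\infty$ bound on $\mathbb{D}\cap\Sigma^{(2)}$ from Lemma~\ref{v2-vptau}), the product $\hat\mu\hat w$ lies in $L^1(\hat\Sigma)$, so dominated convergence gives, for $k\to\infty$ along any ray non-tangential to $\hat\Sigma$,
\[
\lim_{k\to\infty}k\big(\hat m(x,t;k)-I\big)=-\frac{1}{2\pi i}\int_{\hat\Sigma}\hat\mu(x,t;k)\,\hat w(x,t;k)\,\mathrm{d}k,
\]
which in particular establishes existence of the limit. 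I would then split off the $\partial\mathbb{D}$ contribution and the $\hat\mu-I$ contribution, writing the right-hand side as $-\frac{1}{2\pi i}\int_{\partial\mathbb{D}}\hat w\,\mathrm{d}k+Q_1+Q_2$ with
\[
Q_1:=-\frac{1}{2\pi i}\int_{\hat\Sigma}(\hat\mu-I)\hat w\,\mathrm{d}k,\qquad
Q_2:=-\frac{1}{2\pi i}\int_{\hat\Sigma\setminus\partial\mathbb{D}}\hat w\,\mathrm{d}k,
\]
so that it remains only to absorb $Q_1+Q_2$ into the announced error.

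Next I would estimate $Q_1$ by Hölder's inequality with conjugate exponents $p,q$ together with the Neumann-series bound $\|\hat\mu-I\|_{L^p(\hat\Sigma)}\le C_p\|\hat w\|_{L^p(\hat\Sigma)}$ (valid because $\|\mathcal{C}_{\hat w}\|_{L^2\to L^2}\le Ct^{-1/2}<1$ for large $t$), obtaining $|Q_1|\le C\|\hat w\|_{L^p(\hat\Sigma)}\|\hat w\|_{L^q(\hat\Sigma)}$. To control $\|\hat w\|_{L^p(\hat\Sigma)}$ for finite $p$ I would use $\|\hat w\|_{L^p(\partial\mathbb{D})}\le Ct^{-1/2}$ and $\|\hat w\|_{L^p(\hat\Sigma\setminus\bar{\mathbb{D}})}\le Ct^{-3/2}$ from the preceding lemma, and interpolate $\|\hat w\|_{L^1(\mathbb{D}\cap\Sigma^{(2)})}\le C(t\tau)^{-1}$ against the super-exponential bound $\|v^{(2)}-v^p\|_{L^\infty(\Sigma^{(2)}\cap\mathbb{D})}=\mathcal{O}(k_0e^{-c\tau})$ of Lemma~\ref{v2-vptau} (using that $m^p,(m^p)^{-1}$ are bounded on $\Sigma^{(2)}\cap\mathbb{D}$) to render the $\mathbb{D}\cap\Sigma^{(2)}$ piece negligible; this yields $\|\hat w\|_{L^p(\hat\Sigma)}\le Ct^{-1/2}$ and hence $|Q_1|\le Ct^{-1}$. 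For $Q_2$ I would simply use $|Q_2|\le C\|\hat w\|_{L^1(\hat\Sigma\setminus\partial\mathbb{D})}\le C\big((t\tau)^{-1}+t^{-3/2}\big)$. Finally I would carry out the bookkeeping: for $0<l\le\tfrac12$, since $\sqrt\tau=t^{l/2}\ge1$ for large $t$, each of $t^{-1}$, $(t\tau)^{-1}$, $t^{-3/2}$ is $\mathcal{O}(\sqrt\tau\,t^{-1})$, which gives (\ref{minfty-1}); for $\tfrac12<l<1$, since $k_0\le1$ for large $t$ one has $t^{-1}\le (tk_0)^{-1}$, while $(t\tau)^{-1}\le t^{-1}$ and $t^{-3/2}\le t^{-1}$, so all the error terms are $\mathcal{O}((tk_0)^{-1})$, which gives (\ref{minfty-2}).

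The routine parts here — the Hölder and Neumann-series steps — are verbatim those of Lemma~\ref{mestimate}. The point that actually requires care, and the reason the two regimes $0<l\le\tfrac12$ and $\tfrac12<l<1$ must be treated separately, is the justification that the $L^p$-estimates of the preceding lemma (and hence the entire argument) survive when $\mathbb{D}$ itself shrinks: one has to verify that the rescaling $\lambda=-\tfrac{2\sqrt t}{3\sqrt3}k$ maps $\mathbb{D}$ onto the \emph{growing} disk $\{|\lambda|<\tfrac{2}{3\sqrt3}k_0^{-1}\}$ (using $t/\tau=k_0^{-2}$), so that the Painlev\'e~IV parametrix $m^p$ of the Appendix is still the correct local model with matching error $\mathcal{O}(t^{-1/2})$ on $\partial\mathbb{D}$ — precisely the content of Lemma~\ref{v2-vptau} — and, in the complementary regime, that integrating $\hat w$ over the fixed-size circle $\partial\mathbb{D}$ still reproduces the residue of $(m^p)^{-1}-I$ at $k=0$ up to the announced error $(tk_0)^{-1}$, again by Lemma~\ref{v2-vptau}. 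With those facts in hand, (\ref{minfty-1}) and (\ref{minfty-2}) follow exactly as in the Painlev\'e-region case.
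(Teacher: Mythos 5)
Your argument is exactly the one the paper intends: the paper omits this proof, stating only that it is ``similar to Lemmas \ref{w estimate} and \ref{mestimate}'', and your decomposition into the $\partial\mathbb{D}$ integral plus $Q_1$ (H\"older/Neumann series) and $Q_2$ ($L^1$ off the circle) is verbatim the scheme of Lemma \ref{mestimate}, with the final bookkeeping $t^{-1},(t\tau)^{-1},t^{-3/2}=\mathcal{O}(\sqrt{\tau}\,t^{-1})$ for $0<l\le\tfrac12$ and $=\mathcal{O}((tk_0)^{-1})$ for $\tfrac12<l<1$ carried out correctly. The interpolation of the $L^1$ bound on $\mathbb{D}\cap\Sigma^{(2)}$ against the $L^\infty$ bound $\mathcal{O}(k_0e^{-c\tau})$ from Lemma \ref{v2-vptau} is a legitimate filling-in of a detail the paper leaves implicit.
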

\par
The condition that $(x,t)\in \{(x,t): \frac{1}{N}<\tau, x\ge0, \tau=tk_0^2=\mathcal{O}(t^l) ~{\rm with}~0<l\leq\frac{1}{2} \}$ indicates that there exists a positive constant $c_2>0$ such that $|x|\leq c_2t^{\frac{3}{4}}$ for $x\ge0$. Thus in the Painlevé transition region I, i.e., $c_1t^{\frac{1}{2}}\leq|x|\leq c_2t^{\frac{3}{4}}$ for $x\ge0$, as $t\to \infty$, by equation (\ref{minfty-1}), the long-time asymptotics of the solutions $p(x,t)$ and $q(x,t)$ of the modified Boussinesq equation (\ref{mbequation}) is
\begin{equation}\label{pqtrans-1}
    \begin{aligned}
        p(x, t)&= \frac{\sqrt{3}}{4\sqrt{t}} \frac{\left( P_{\rm IV}'(y) + \frac{2}{3}\right)}{P_{\rm IV}(y)} + \mathcal{O}\left(\sqrt{\tau}{t}^{-1}\right),\\
        q(x,t)&= \frac{\sqrt{3}}{4\sqrt{t}} \left( P_{\rm IV}(y) + \frac{2}{3} y \right) + \mathcal{O}\left(\sqrt{\tau}{t}^{-1}\right).
    \end{aligned}
\end{equation}
In particular, for the case $\frac{1}{2}<l<1$, i.e., in the Painlevé transition region II, i.e., $c_2t^{\frac{3}{4}}<|x|\ll c_3 t$ for $x\ge0$, as $t\to \infty$, the asymptotic formula is
\begin{equation}
    \begin{aligned}\label{pqtrans-2}
        p(x, t)&= \frac{\sqrt{3}}{4\sqrt{t}} \frac{\left( P_{\rm IV}'(y) + \frac{2}{3}\right)}{P_{\rm IV}(y)} + \mathcal{O}\left((tk_0)^{-1}\right),\\
        q(x,t)&= \frac{\sqrt{3}}{4\sqrt{t}} \left( P_{\rm IV}(y) + \frac{2}{3} y \right) + \mathcal{O}\left((tk_0)^{-1}\right).
    \end{aligned}
\end{equation}
\begin{remark}
    For $x < 0$, the analysis in the Painlev\'e transition region is similar, so the detailed calculations are removed for simplicity. Notice that in the case of $0 < l \leq \frac{1}{2}$, as $l \to 0$, the error term in asymptotic formula (\ref{pqtrans-1}) converges to $\mathcal{O}\left(\frac{1}{t}\right)$, which is consistent with the error term in (\ref{pq Painleve}).
    This implies that the long-time asymptotics in the Painlev\'e region and Painlev\'e transition region I matches very well and there doesn't exist new region between them.
    Moreover, as $l=\frac{1}{2}$, the error term $\mathcal{O}\left((tk_0)^{-1}\right)$ in asymptotic formula (\ref{pqtrans-2}) is equivalent to $\mathcal{O}\left(\frac{\sqrt{\tau}}{t}\right)=\mathcal{O}(t^{-\frac{3}{4}})$ which is just the error term in (\ref{pqtrans-1}). In this sense, the long-time asymptotics in the Painlev\'e transition region I and II matches very well and no new region appears between them.
\end{remark}

\subsection{Matching of asymptotic formula in Painlev\'e transition region II with the dispersive wave region}

This subsection considers the case of \( l \to 1 \) in the assumption \( \tau = \mathcal{O}(t^l) \), in which the critical point \( k_0 \) remains non-vanishing as \( t \to \infty \). In particular, for \( \zeta = x / t \in (0, +\infty) \), the long-time asymptotic formula in dispersive wave region holds uniformly for \( t \to \infty \) as given in \cite{WangJMP}:
\begin{equation}\label{pq_longtime}
\begin{aligned}
    p(x,t) &= \frac{3^{3/4} \sqrt{\nu_1}}{\sqrt{2 t}} \cos \left(\frac{5 \pi}{12} + \arg r_1(k_0) + \arg \Gamma(i \nu_1) + \sqrt{3} k_0^2 t - \nu_1 \ln \left(6 \sqrt{3} k_0^2 t\right) - \theta_1\right) + \mathcal{O}\left(\frac{\ln t}{t}\right), \\
    q(x,t) &= -\frac{3^{1/4} \sqrt{\nu_1}}{\sqrt{2 t}} \sin \left(\frac{5 \pi}{12} + \arg r_1(k_0) + \arg \Gamma(i \nu_1) + \sqrt{3} k_0^2 t - \nu_1 \ln \left(6 \sqrt{3} k_0^2 t\right) - \theta_1\right) + \mathcal{O}\left(\frac{\ln t}{t}\right),
\end{aligned}
\end{equation}
where \( k_0 = \zeta / 2 \), \( \Gamma(\cdot) \) is the Gamma function, and \( \nu_1 \) and \( \theta_1 \) are given by
\[
\nu_1 = -\frac{1}{2 \pi} \ln \left(1 - \left|r_1(k_0)\right|^2\right), \quad 
\theta_1 = \frac{1}{\pi} \int_{k_0}^{\infty} \ln \left|\frac{s - k_0}{s - \omega k_0}\right| d \ln \left(1 - \left|r_1(s)\right|^2\right).
\]
\par
\par
It will be shown that the leading-order term of asymptotic formula (\ref{pqtrans-2}) in the Painlev\'e transition region II matches very well with that of asymptotic formula (\ref{pq_longtime}) in dispersive wave region. To do so, we analyze the leading-order term of the Clarkson-McLeod solution to the Painlev\'e 
\(\mathrm{IV}\) equation. Specifically, Its and Kapaev \cite{ItsJPA} found that the solution $P_{\mathrm{IV}}(y)$ of equation (\ref{PIV}) for \( y \to -\infty \) satisfies the following asymptotic formula
\begin{equation}\label{p4_long_asymp}
P_{\mathrm{IV}}(y) = -\frac{2y}{3} + 2\sqrt{2} a \cos \Theta + \mathcal{O}\left(\frac{1}{y}\right),
\end{equation}
where
\[
\begin{aligned}
    & a^2 = -\frac{1}{2\sqrt{3}\pi} \ln \left(1 - \left|s_{-}\right|^2\right), \quad a > 0, \\
    & \Theta = \frac{y^2}{\sqrt{3}} - \sqrt{3} a^2 \ln \left(2\sqrt{3} y^2\right) + \phi, \\
    & \phi = -\frac{3\pi}{4} - \frac{2\pi}{3}(\alpha - \beta) - \arg \Gamma\left(-\mathrm{i}\sqrt{3} a^2\right) - \arg s_{-}.
\end{aligned}
\]
	Substituting \( y = -\frac{\sqrt{3}x}{2\sqrt{t}} \) into the asymptotic expression in (\ref{p4_long_asymp}) and seting \( \alpha = -\frac{1}{6} \) and \( \beta = -\frac{2}{3} \), it is obtained that the leading-order term of $q(x, t)$ in asymptotic formula (\ref{pqtrans-2}) behaves like
\begin{equation}
\label{q_longtime-mathch}
\begin{aligned}
    q(x, t) &\sim \frac{\sqrt{3}}{4\sqrt{t}} \left( P_{\mathrm{IV}}(y) + \frac{2}{3} y \right) \\
    &\sim \frac{3^{1/4} \sqrt{-\frac{1}{2\pi} \ln(1 - |s_-|^2)}}{\sqrt{2t}} \cos \left( \frac{\sqrt{3}x^2}{4t} + \frac{1}{2\pi} \ln(1 - |s_-|^2) \ln \left(\frac{3\sqrt{3}x^2}{2t}\right) \right. \\
    &\quad \left. - \frac{13\pi}{12} - \arg \Gamma\left(-\mathrm{i} \sqrt{3} a^2\right) - \arg s_- \right).
\end{aligned}
\end{equation}
\par
Furthermore, recalling that \( k_0 = \frac{x}{2t} \) and letting \( s_- = \overline{r_1(k_0)} \), it follows from (\ref{q_longtime-mathch}) that
\[
q(x, t) \sim -\frac{3^{1/4} \sqrt{\nu_1}}{\sqrt{2t}} \sin \left( \frac{5\pi}{12} + \arg r_1(k_0) + \arg \Gamma(i \nu_1) + \sqrt{3} k_0^2 t - \nu_1 \ln \left(6\sqrt{3}k_0^2 t\right) \right),
\]
which agrees with the leading-order term in (\ref{pq_longtime}) because of the facts that \( \theta(k_0) \to 0 \) as \( k_0 \to 0 \) and that the Gamma function is conjugate symmetric. In the similar way, it can also be seen that the component $p(x, t)$ also matches well. However, it is worth noting that the error term \( \mathcal{O}\left(\frac{\ln t}{t}\right) \) in (\ref{pq_longtime}) is not uniform with the error term \( \mathcal{O}\left((tk_0)^{-1}\right) \) in (\ref{pqtrans-2}).

\begin{remark}
   Although the matching of leading-order terms between (\ref{pqtrans-2}) and (\ref{pq_longtime}) is established informally by the asymptotics of Clarkson-McLeod solution \cite{ItsJPA} of the Painlev\'{e} \(\mathrm{IV}\) equation, the asymptotic analysis in \cite{ItsJPA} is based on a second-order system, in contrast to the \( 3 \times 3 \) RH problem \( m^p(y;\lambda) \) described in the Appendix \ref{Appendix}. It remains to derive the asymptotic behaviors of \( m^p(y;\lambda) \) for \( y \to -\infty \).
\end{remark}

\appendix

\section{The Painlev\'e IV Model problem}\label{Appendix}

	Introduce the jump matrices of the RH problem of function $m^{p}(y;\lambda)$ with contour in Figure \ref{hat Sigma}, denoted as $v^{p}_j(y;\lambda)$ in the form:
	\begin{equation}\label{jump matrices of mp}
		\begin{aligned}
			&v_1^{p}=e^{3y\lambda \hat{J}+\frac{27}{4}\lambda^2\hat{J^2}}\begin{pmatrix}
				1 & 0 & 0\\
				s_1& 1 &s_3\\
				0 & 0 &1
			\end{pmatrix},
			&&v_6^{p}=e^{3y\lambda \hat{J}+\frac{27}{4}\lambda^2\hat{J^2}}\begin{pmatrix}
				1 & s_2 & s_4\\
				0 & 1 & 0\\
				0 & 0 & 1
			\end{pmatrix},\\
			&v_3^{p}=e^{3y\lambda \hat{J}+\frac{27}{4}\lambda^2\hat{J^2}}\begin{pmatrix}
				1 & s_3 & s_1 \\
				0 & 1 & 0 \\
				0 & 0 & 1 \\
			\end{pmatrix},
			&&v_2^{p}=e^{3y\lambda \hat{J}+\frac{27}{4}\lambda^2\hat{J^2}}\begin{pmatrix}
				1 & 0 & 0 \\
				0 & 1 & 0 \\
				s_2 & s_4 & 1 \\
			\end{pmatrix},\\
			&v_5^{p}=e^{3y\lambda \hat{J}+\frac{27}{4}\lambda^2\hat{J^2}}\begin{pmatrix}
				1 & 0 & 0 \\
				0 & 1 & 0 \\
				s_3 & s_1 & 1 \\
			\end{pmatrix},
			&&v_4^{p}=e^{3y\lambda \hat{J}+\frac{27}{4}\lambda^2\hat{J^2}}\begin{pmatrix}
				1 & 0 & 0 \\
				s_4 & 1 & s_2\\
				0 & 0 & 1 \\
			\end{pmatrix},	
		\end{aligned}\\
	\end{equation}
	and 
	$$
	v_j=I,~j=7,\cdots,12,\quad v_{13}=v_6v_1,\ v_{14}=v_2v_3,\ v_{15}=v_4v_5,
	$$
	where
	$$
	s_1=e^{-2\pi\nu}\rho_1^*(0)=r_1^*(0),\ s_3=-r_2^*(0),\ s_2=-\bar{s_1},\ s_4=-\bar{s_3},\ J=\begin{pmatrix}
        \omega&0&0\\
        0&\omega^2&0\\
        0&0&1
    \end{pmatrix}.
	$$
    
\begin{remark}
Indeed, the above jump matrices can be simplified by a series of equivalent transformations. As a result, the jump conditions of the function $m^p(y;\lambda)$ satisfy the jump matrices in $(\mathrm{\ref{jump matrices of mp}})$, and the jump contour $\Sigma^{(P)}$ is depicted in Figure \ref{SigmaP}. For convenience, it is enough to use the jump conditions as described in Figure \ref{SigmaP}.
	\end{remark}
	\begin{figure}[htp]
		\centering
		\begin{tikzpicture}[>=latex]
			\draw[<->,very thick,rotate=30] (-2,0)node[below]{${4}$} to (2,0)node[above]{${1}$};
			\draw[very thick,rotate=30] (-4,0) to (4,0);
			\draw[<->,very thick,rotate=30] (-1,-1.732)node[right=2mm]{${5}$} to (1,1.732)node[left=2mm]{${2}$};
			\draw[very thick,rotate=30] (-2,-1.732*2) to (2,1.732*2);
			\draw[<->,very thick,rotate=30] (-1,1.732)node[left]{${3}$} to (1,-1.732)node[right]{${6}$};
			\draw[very thick,rotate=30] (-2,1.732*2) to (2,-1.732*2);
			\draw (1.732/2,1/2) arc[start angle=30, end angle=90, radius=1cm];
			\node at(0.8,1.1) {$\frac{\pi}{3}$};
		\end{tikzpicture}
		\caption{The jump contour $\Sigma^{(P)}=\cup_{i=1}^{6}\Sigma_i^{(P)}$, where the angle between rays is $\frac{\pi}{3}$.}
		\label{SigmaP}
	\end{figure}
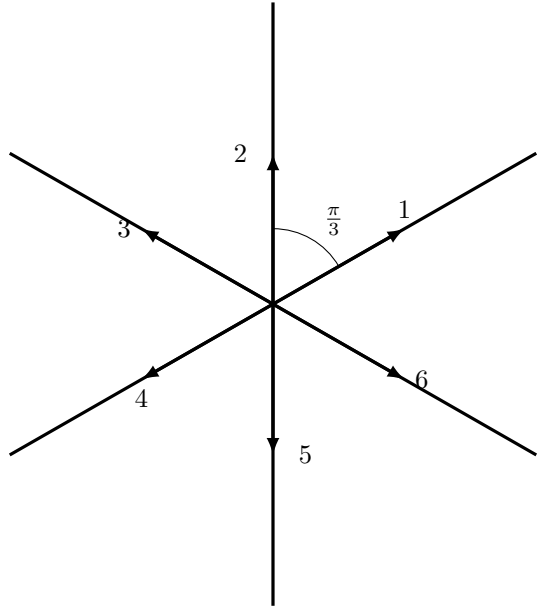
	\begin{remark}
		Notice that the jump matrices in $(\mathrm{\ref{jump matrices of mp})}$ and the equality $v_1^pv_2^p\cdots v_6^p=I$ imply that
		$$
		s_4+s_1+s_2s_3=0,\quad s_3+s_2+s_1^2+s_1s_2s_3=0,
		$$
		which shows that
           \begin{equation}\label{condition for mp}
			r_2^*(0)+r_1(0)+r_2(0)r_1^*(0)=0.
		\end{equation}
        
		On the other hand, the jump matrices satisfy the following $\Z_2$ and $\Z_3$ symmetries
		$$
		v^p(y;\lambda)=\mathcal{B}\left((v^p)^*(y;\lambda)\right)^{-1}\mathcal{B}^{-1}=\mathcal{A}v^p(y;\omega \lambda)\mathcal{A}^{-1}
		$$
		which indicates that
		$$
		m^p(y;\lambda)=\mathcal{B}(m^p)^*(y;\lambda)\mathcal{B}^{-1}=\mathcal{A}m^p(y;\omega \lambda)\mathcal{A}^{-1}.
		$$
	\end{remark}
    
	\begin{RHproblem}
		Find a 3$\times$3 matrix-valued meromorphic function $m^p(y;\lambda)$ to satisfy the following  conditions:
		\begin{enumerate}
		    \item[$(\rm i)$] The matrix-valued function $m^p(y;\lambda)$ is analytic for $\lambda \in \mathbb{C} \backslash \Sigma^{(P)}$;
            \item [$(\rm ii)$]  On $\Sigma^{(P)}$, the jump condition $m_{+}^{p}(y;\lambda )=m_{-}^p(y;\lambda) v^p(y; \lambda)$ holds, where the jump matrices $v^p(y; \lambda)=\left\{v_j^p(y; \lambda)\right\}_{j=1}^6$ are defined in $(\mathrm{\ref{jump matrices of mp})}$;
            \item [$(\rm iii)$] The large $\lambda$ asymptotics of the function $m^p(y; \lambda)$ holds, i.e., $m^p(y; \lambda)=I+O(\frac{1}{\lambda})$ as $\lambda \rightarrow \infty$ and $m^p(y; \lambda)$ is bounded as $\lambda\to 0$ with $\lambda \notin \Sigma^{(P)}$.
            \item [$(\rm iv)$] $m^p(y; \lambda)$ follows the $\mathbb{Z}_2$ and $\mathbb{Z}_3$ symmetries
		\end{enumerate}
		\begin{equation}\label{symmetry of mp}
			\mathcal{A }m^p(y; \omega\lambda) \mathcal{A}^{-1}=m^p(y; \lambda)=\mathcal{B}\overline{ m^p(y; \bar \lambda)} \mathcal{B}, \quad \lambda \in \mathbb{C} \backslash \Sigma^{(P)} .
		\end{equation}
	\end{RHproblem}
	\begin{lemma}\label{mpexpansion}
As $\lambda\to\infty$, for any positive integer $N\ge1$, the RH problem for function $m^{p}(y;\lambda)$ admits the following expansion
$$
m^{p}(y;\lambda)=I+
\sum_{j=1}^N\frac{m^{p}_j(y)}{\lambda^j}+\mathcal{O}\left(\frac{1}{\lambda^{N+1}}\right),
$$
where the term $m^{p}_1(y)$ is associated with the solution of the fourth Painlev\'{e} equation in (\ref{PIV}) with parameters $\alpha=-\frac{1}{6},\beta=-\frac{2}{3}$.

	\end{lemma}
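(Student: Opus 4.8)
The plan is to carry out the argument in three stages: solvability of the RH problem for $m^p(y;\lambda)$, the asymptotic expansion as $\lambda\to\infty$, and the identification of $m_1^p(y)$ with a Painlev\'e $\mathrm{IV}$ transcendent via an isomonodromy (Lax pair) argument. For \emph{solvability}, note that the jump matrices in (\ref{jump matrices of mp}) are unipotent, and together with the $\mathbb Z_2$ and $\mathbb Z_3$ symmetries in (\ref{symmetry of mp}) this is exactly the setting in which a vanishing lemma applies. I would show that the homogeneous problem (replace condition $(\mathrm{iii})$ by $m^p=\mathcal O(1/\lambda)$) has only the trivial solution: form an appropriate sesquilinear combination $H(\lambda)$ of $m^p(\lambda)$ and $\overline{m^p(\bar\lambda)}$ built from $\mathcal B$, verify using the conjugation symmetry of the jump that $H$ has no jump across $\Sigma^{(P)}$, is bounded at $\lambda=0$, and decays at $\infty$, hence $H\equiv 0$, which forces $m^p\equiv 0$. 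Fredholm theory for $I-\mathcal C_{w^p}$ on $L^2(\Sigma^{(P)})$ (index zero) then gives existence and uniqueness of $m^p(y;\lambda)$ for every $y\in\mathbb R$, with boundedness at $\lambda=0$ coming from the fact that near the origin each jump factors through two triangular matrices and produces no genuine singularity.

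For the \emph{expansion at infinity}, observe that off any fixed neighbourhood of $0$ the matrices $v_j^p-I$ in (\ref{jump matrices of mp}) are $\mathcal O\!\big(e^{-c(|y|\,|\lambda|+|\lambda|^2)}\big)$ on the corresponding ray, uniformly for $y$ in compact sets, while $v_j^p=I$ on the rays $7,\dots,12$. Hence $w^p:=v^p-I$ has finite $L^1\cap L^\infty$ norms on $\Sigma^{(P)}$, and from the Cauchy-integral representation $m^p(y;\lambda)=I+\frac1{2\pi i}\int_{\Sigma^{(P)}}\frac{\mu^p(\eta)\,w^p(\eta)}{\eta-\lambda}\,\mathrm d\eta$, expanding $\frac1{\eta-\lambda}$ in inverse powers of $\lambda$ and using the exponential decay of $w^p$ to justify term-by-term integration, one obtains the claimed expansion $m^p=I+\sum_{j=1}^N m_j^p(y)\lambda^{-j}+\mathcal O(\lambda^{-N-1})$; smoothness of the coefficients $m_j^p(y)$ in $y$ follows from the analytic dependence of the whole construction on $y$.

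For the link to \emph{Painlev\'e IV}, put $\Psi(y;\lambda):=m^p(y;\lambda)\,e^{3y\lambda J+\frac{27}{4}\lambda^2 J^2}$. Since the jump matrices in (\ref{jump matrices of mp}) are the conjugation $e^{3y\lambda\hat J+\frac{27}{4}\lambda^2\hat J^2}$ of constant unipotent matrices, $\Psi$ has $y$- and $\lambda$-independent jumps, so $A:=\partial_\lambda\Psi\,\Psi^{-1}$ and $U:=\partial_y\Psi\,\Psi^{-1}$ are entire in $\lambda$; the expansion of the previous step then forces $A$ to be a quadratic and $U$ a linear polynomial in $\lambda$, with coefficients built from $J=\mathrm{diag}(\omega,\omega^2,1)$, $m_1^p(y)$ and $m_2^p(y)$ (the latter a polynomial in the entries of $m_1^p$ via the recursion coming from $m^p_+=m^p_-v^p$). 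The compatibility condition $\partial_y A-\partial_\lambda U+[A,U]=0$ yields a closed first-order system for the entries of $m_1^p(y)$; introducing the Painlev\'e $\mathrm{IV}$ unknown as the appropriate ratio of entries of $m_1^p(y)$ — the same combination appearing in the reconstruction formulas (\ref{pq Painleve}) — and differentiating once, one obtains (\ref{PIV}). Tracking the exponents of $J$ through the conjugation fixes the monodromy exponents of $A$ at $\lambda=0,\infty$ and hence the parameters $\alpha=-\tfrac16$, $\beta=-\tfrac23$, subject to the constraint (\ref{condition for mp}) on $s_1,\dots,s_4$.

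The main obstacle I expect is the vanishing lemma in the first stage: it must be established for a $3\times3$ problem whose contour $\Sigma^{(P)}$ has six rays through the origin and whose jumps are products of triangular factors with complex off-diagonal entries rather than positive-definite Hermitian, so the classical contour-deformation/positivity argument has to be combined carefully with \emph{both} discrete symmetries and with the relation (\ref{condition for mp}). After that, the bookkeeping in the last stage that collapses the $3\times3$ isomonodromic system to the scalar equation (\ref{PIV}) with the exact parameter values is the place where the constants $-1/6$ and $-2/3$ must be matched precisely, and is correspondingly delicate.
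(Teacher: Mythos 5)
Your proposal follows essentially the same route as the paper: the expansion is obtained from the standard Cauchy-integral representation of $m^p$, and the identification of $m_1^p(y)$ with Painlev\'e IV goes through the Lax pair $A=\Phi_\lambda\Phi^{-1}$, $U=\Phi_y\Phi^{-1}$ for $\Phi=m^p e^{3y\lambda J+\frac{27}{4}\lambda^2J^2}$ and the compatibility condition, exactly as in the paper (which then uses the $\mathbb{Z}_2$/$\mathbb{Z}_3$ symmetries to reduce $m_1^p$ to a single complex function $\varphi_2$, derives $\varphi_2'-9\overline{\varphi_2}^{\,2}+\tfrac{2iy}{3}\varphi_2=0$, and substitutes to land on (\ref{PIV}) with $\alpha=-\tfrac16$, $\beta=-\tfrac23$). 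One correction: since $\partial_\lambda\bigl(3y\lambda J+\tfrac{27}{4}\lambda^2J^2\bigr)$ is linear in $\lambda$, Liouville gives $A=A_1\lambda+A_0$ with $A_1=\tfrac{27}{2}J^2$ and $A_0=3yJ+\tfrac{27}{2}[m_1^p,J^2]$ --- degree one, not two --- so only $m_1^p$ enters and $m_2^p$ is never needed; moreover $A$ is entire with no singularity at $\lambda=0$, so the parameter values are fixed by the explicit reduction of the compatibility system rather than by exponents at the origin. Your solvability/vanishing-lemma discussion is additional material that the paper's proof simply omits (it invokes "standard Cauchy integral analysis"), and is a reasonable, if nontrivial, supplement.
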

	\begin{proof}
		Based on the standard Cauchy integral analysis, the expansion of $m^{p}(y;\lambda)$ as $\lambda\to\infty$ is established. Furthermore, it is sufficient to show the relation between $m^{p}_1(y)$ and Painlev\'{e} IV equation. More precisely, substituting the symmetries in (\ref{symmetry of mp}) into the expansion coefficient $m^p_1(y)$, it follows that
		$$
		m^p_1(y)=\omega^2\mathcal{A}m^p_1(y)\mathcal{A}^{-1}=\mathcal{B}\overline{ m^p_1(y)} \mathcal{B},
		$$
		which implies that
\begin{equation}\label{mp1}
			m^p_1(y)=\begin{pmatrix}
				\omega^2 \varphi_3 & \omega^2 \varphi_1 & \omega^2 \varphi_2\\
				\omega \varphi_2 & \omega \varphi_3 & \omega^2 \varphi_1\\
				\varphi_1 & \varphi_2 & \varphi_3\\
			\end{pmatrix},
		\end{equation}
		with $\varphi_1=\bar{\varphi}_2$ and $\varphi_3=\bar{\varphi}_3$. Now, let $\Phi(y;\lambda)=m^{p}(y;\lambda)e^{3y\lambda J+\frac{27}{4}\lambda^2 J^2}$ and suppose that
		\begin{equation}
			\begin{aligned}
				&A(y;\lambda)
				=\Phi_{\lambda}(y;\lambda)\Phi(y;\lambda)^{-1},\\
				&U(y;\lambda)
				=\Phi_{y}(y;\lambda)\Phi(y;\lambda)^{-1}.
			\end{aligned}
		\end{equation}
		Since both $A(y;\lambda)$ and $U(y;\lambda)$ are  holomorphic functions for $\lambda$, it is seen that
		$$
		\begin{aligned}
			A(y;\lambda)&=A_0(y)+A_1(y)\lambda,\\
			U(y;\lambda)&=U_0(y)+U_1(y)\lambda,\\
		\end{aligned}
		$$
		with $A_1=\frac{27}{2}J^2, A_0=3 y J+\frac{27}{2}[m_1^p,J^2]$ and $U_1=3J, U_0=3 [m_1^p,J]$.
		Finally, the compatibility condition $U_{\lambda}-A_y+[U,A]=0$ implies that
		\begin{equation}
			\varphi_2'(y)-9 \overline{\varphi_2(y)}^2+\frac{2 i y \varphi_2(y)}{3}=0.
		\end{equation}
        \par
		Let $\varphi_2=f(y)+i g(y)$, and thus the equations for functions $f(y)$ and $g(y)$ are gotten as follows:
		\begin{equation}
			\begin{cases}
				\begin{aligned}
					&18 f(y) g(y)+\frac{2 yf(y) }{\sqrt{3}}+g'(y)=0,\\
					&f'(y)-9 f(y)^2+9 g(y)^2-\frac{2 y g(y)}{\sqrt{3}}=0.
				\end{aligned}
			\end{cases}
		\end{equation}
        \par
		Suppose that $f=-\frac{\sqrt{3} g'(y)}{2 \left(9 \sqrt{3} g(y)+y\right)}$ and $g=-\frac{ P_{\rm IV}(y)+\frac{2 y}{3}}{6 \sqrt{3}}$, it is immediate to find that the function $P_{\rm IV}(y)$ solves the following ordinary differential equation 
    \begin{equation}\label{model painleve}
			-\frac{P''(y)}{ P_{\rm IV}(y)}+\frac{P'(y)^2}{2  P_{\rm IV}(y)^2}+4 y  P_{\rm IV}(y)+\frac{3  P_{\rm IV}(y)^2}{2}-\frac{2}{9  P_{\rm IV}(y)^2}+2 y^2=0,
		\end{equation}
		which is coincides with Painlev\'{e} IV equation (\ref{PIV}) with $\alpha=-\frac{1}{6},\beta=-\frac{2}{3}$.
    \end{proof}
    
\noindent{\bf Conflict of interest declaration.} We declare that we have no competing interests.

\subsection*{Acknowledgements}

The authors would like to express their sincere and profound gratitude to Professor Jonatan Lenells for his invaluable guidance, unwavering support, and insightful discussions, which have been instrumental in the progression and successful completion of this work. We also thank Professor Charlier Christophe for his discussions and suggestions on this manuscript. Additionally, we gratefully acknowledge the generous financial support provided by the National Natural Science Foundation of China (Grant Nos. 12371247 and 12431008). The last author acknowledge the support of the scholarship provided by the China Scholarship Council (CSC) under Grant No. 202406040149 and the GNFM-INDAM group and the research project Mathematical Methods in NonLinear Physics (MMNLP), Gruppo 4-Fisica Teorica of INFN.

\bibliographystyle{amsplain}

\end{document}